\newcommand{\bT}{{\mathbb T}}
\newcommand{\bZ}{{\mathbb Z}}
\newcommand{\bR}{{\mathbb R}}
\newcommand{\bA}{{\mathbb A}}
\newcommand{\bC}{{\mathbb C}}
\newcommand{\bD}{{\mathbb D}}
\newcommand{\bM}{{\mathbb M}}
\newcommand{\bG}{{\mathbb G}}
\newcommand{\ep}{{\epsilon}}
\newcommand{\la}{{\langle}}
\newcommand{\ra}{{\rangle}}
\newtheorem{thm}{Theorem}[section]
\newtheorem{lemma}[thm]{Lemma}
\newtheorem{cor}[thm]{Corollary}
\newtheorem{prop}[thm]{Proposition}
\newtheorem{as}[thm]{Assumption}
\numberwithin{equation}{section}
\begin{document}
                        
\title[motivic cohomology]{motivic cohomology of twisted 
flag varieties}
 
\author{N.Yagita}

\address{ faculty of Education, 
Ibaraki University,
Mito, Ibaraki, Japan}
 
\email{ nobuaki.yagita.math@vc.ibaraki.ac.jp, }

\keywords{motivic cohomology,  Rost motive,  twisted flag variety }
\subjclass[2000]{ 55N20, 55R12, 55R40}

\begin{abstract}
In this paper, we study the $mod(p)$ motivic cohomology of
twisted complete flag varieties over some restricted fields $k$.  Here
we take the field $k=\bR$ when $p=2$. When $p$ is odd,
we take $k$ such that the Milnor $K$-theory $K_i^M(k)/p=0$
for $i>3$.
\end{abstract}
\maketitle

\section{Introduction}

Let $G$ be a simply connected compact Lie group
(and $T$ its maximal torus).
For a field $k\subset \bC$, 
let $G_k$ be the corresponding split reductive group
over $k$ and $T_k$ its split torus.  Let us write $\bG_k$ a nontrivial
$G_k$-torsor.  Then  $\bG_k/B_k$ is a (twisted form of) the complete
flag variety for a Borel subgoup $B_k\supset T_k$.  Let us simply write by $X$ the flag variety $\bG_k/B_k$.
  We will study  the motivic cohomology
$H^{*,*'}(X;\bZ/p)$ for twisted $X$.

For non-twisted cases, note that it is well known
\[  H^*(G_k/B_k;\bZ/p)\cong  H^{2*}(G/T;\bZ/p)\otimes \bZ/p[\tau]\]
where $0\not =\tau\in H^{0, 1}(Spec(\bar k);\bZ/p ).$
The cohomology $H^*(G/T;\bZ/p)$ (for the topological flag
manifold) are completely known 
by Toda, Watanabe, Nakagawa  [Tod], [Tod-Wa], [Na].

In this paper, we give examples (which are the first for odd prime $p$) such that 
the ring structures of $H^{*,*'}(X;\bZ/p)$ are computed
for twisted complete flag varieties  $X$.

 Petrov, Semenov and Zainoulline [Pe-Se-Za]
show  that 
the motive 
$M(X)$ of $X$ is 
decomposed as a sum of (degree changing) 
generalized Rost motive $R(\bG_k)$, namely,
\[ M(X)\cong R(\bG_k)\otimes (\oplus_{s} \bT^{\otimes s})\]
where $\bT^{\otimes s}$ is the (degree changing) Tate motive. 

Hence we can compute $H^{*,*'}(X;\bZ/p)$ additively if
$H^{*,*'}(R(\bG_k);\bZ/p)$ can be computed, while it is a difficult problem
in general cases.  We say that a Lie group $G$ is of type $(I)$  if it is simply connected and $H^*(G;\bZ/p)$ contains 
only one even dimensional truncated polynomial 
generator.
In this case, 
$R(\bG_k)$ is the original Rost motive $R_2$ ([Ro1],[Vo1,4],[Su-Jo]) defined from
a pure  symbol in the Milnor $K$-theory $K_3^M(k)/p$),  and
their Chow groups are known ([Me-Su], [Ya1,4]).  Moreover
 we can compute also ring structure of  $CH^*(X)/p$ (see [Ya7]).

The motivic cohomology $H^{*,*'}(R_n;\bZ/p)$, of course,
depends on the field $k$.   It is known for $k=\bR$ and $p=2$.
Hence we can compute $H^{*,*'}(X;\bZ/2)$ in  this case.
We also compute its ring structure (Theorem 6.7) by using the result
from [Ya7].  
As an application, we study Barlmer's Witt group
$W^*(X)$ for $k=\bR$.

It seems no literature to compute $H^{*,*'}(R_n;\bZ/p)$ for an odd prime $p$.  Hence we write down it
(Theorem 3.3)
when $K_{n+2}^M(k)/p=0$. (Examples of such fields are
high dimensional local fields defined by K. Kato [Ka].)
 Using these, we give the ring structure of $H^{*,*'}(X;\bZ/p)$  (Theorem 3.12) when 
$K_4^M(k)/p=0$ (and of course $K_3^M(k)/p\not =0$).
We also compute the $mod(p)$ algebraic cobordism
$MGL^{*,*'}(X;\bZ/p)$ for these fields $k$.

  
The plan of this paper is the following.
In $\S 2$, we recall the motives of flag varieties.
In $\S 3$, we compute $H^{*,*'}(R_n;\bZ/p)$ when
$K_{n+2}^M(k)/p=0$, and compute $H^{*,*'}(X;\bZ/p)$
for $G$ is type $(I)$.  In $\S 4$, we study algebraic cobordism
version.  In $\S 5$ we study the $BP\la n-1\ra^*$-theory version,
which gives a short another proof for the result in $\S 4$.
In $\S 6$, we compute $H^{*,*'}(X)/2$ when $k=\bR$, and
in $\S 7$,  we study the Balmer Witt group $W^*(X)$.

\section{Lie groups and algebraic groups}

Let $p$ be a prime number and $k$ field with $k\subset \bC$.
Let $X$ be an algebraic variety over $k$ such that
$\bar X=X\otimes _k\bar k$ is cellular for the algebraic
closure $\bar k$ of $k$.
Let $H^{*,*'}(X;\bZ/p)$ (resp. $CH^*(X)=CH^*(X)_{(p)}$) be the mod($p$) motivic 
cohomology (resp. Chow ring localized at $p$) over $k$.
Note that $H^{*,*'}(\bar X;\bZ_{(p)})$ is torsion free, since $\bar X$ is cellular.

Let $a$ be  a pure symbol  in the mod $p$  Milnor $K$-theory
$ K_{n+1}^M(k)/p$.  By Rost,  we can
construct the norm variety $V_a$ of $dim(V_a)=p^n-1$ such that $\bar V_a\cong v_n$ is cellular (where $v_n$ is a generator of $BP^*$), and
$ a=0\in k_*^M(k(V_a))/p$ for the function filed $k(V_a)$ of $V_a$.

Rost and Voevodsky  showed that
there is $y\in CH^{b_n}(\bar V_a)$ for
$b_n=(p^n-1)/(p-1)$  such that $y^{p-1}$ is the fundamental class of $\bar V_a$.  We know
([Ro1], [Vo1,4], [Me-Su], [Ya4])
\begin{thm}
For a nonzero pure symbol $a\in K_{n+1}^M(k)/p$,
 there is an irreducible (Rost) motive  $R_a$ ( write by $R_n$
simply)
 in the motive $M(V_a)$ of the norm variety $V_a$
such that
$\  CH^*(\bar R_a)\cong \bZ_{(p)}[y]/(y^p), \ $ and
\[ CH^*(R_a)
 \cong \bZ_{(p)}\{1\}\oplus_{i=1}^{p-1}M_n(y^i).\]
Here $ M_n(y^i)=\bZ_{(p)}\{c_0(y^i)\}\oplus \bZ/p\{c_1(y^i),....,c_{n-1}(y^i)\}$ with $|c_j(y^i)|=|y^i|-2(p^i-1)$,
where $\bZ/p\{a,b,...\}$ is a $\bZ$-free module generate by 
$a,b,...$
\end{thm}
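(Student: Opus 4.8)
\emph{Proof proposal.} The plan is to split the statement into three tasks: realize $R_a$ as a direct summand of $M(V_a)$; read off $CH^*(\bar R_a)$ (immediate); and compute $CH^*(R_a)$ over $k$, where the real content lies. For the first two, I would build $R_a$ by lifting a geometric idempotent. Since $\bar V_a$ is cellular, $M(\bar V_a)$ is a finite sum of Tate motives and $CH^*(\bar V_a)$ is a torsion free $\bZ_{(p)}$-module; as $y^{p-1}$ is the fundamental class, the classes $1,y,\dots,y^{p-1}$ (in codimensions $0,b_n,\dots,(p-1)b_n=\dim V_a$) are part of a homogeneous basis and span a Tate submotive $T\cong\bigoplus_{i=0}^{p-1}\bZ_{(p)}(ib_n)[2ib_n]$. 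Choosing Poincar\'e-dual partners $z_i$ with $\langle y^i,z_j\rangle=\delta_{ij}$, the correspondence $\pi_{\bar k}=\sum_{i=0}^{p-1}y^i\times z_i$ is an idempotent projecting $M(\bar V_a)$ onto $T$. Because $\bar V_a$ is cellular, $V_a$ is geometrically split, so Rost nilpotence holds for $M(V_a)$ and $\pi_{\bar k}$ lifts to an idempotent $\pi$ over $k$ (Rost's special correspondence on $V_a$ furnishes one directly). Set $R_a=(V_a,\pi)$; then $\bar R_a\cong T$, giving $CH^*(\bar R_a)\cong\bZ_{(p)}[y]/(y^p)$. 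Indecomposability of $R_a$ over $k$ will follow, once the computation below is in hand, from $H^{0,0}(R_a;\bZ/p)=\bZ/p$ and $a\neq0$ over $k$ (equivalently $\mathrm{End}(R_a)$ is local); see [Me-Su], [Vo1].

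\emph{Computing $CH^*(R_a)$ over $k$.} I would compute $H^{*,*'}(R_a;\bZ/p)$ first and then lift to $\bZ_{(p)}$. Let $\mathcal X=\check C(V_a)$ be the \v{C}ech simplicial scheme; it is geometrically contractible, and the norm residue isomorphism theorem of Rost--Voevodsky determines $H^{*,*'}(\mathcal X;\bZ/p)$ completely, realizing it as the module generated over $H^{*,*'}(\mathrm{Spec}\,k;\bZ/p)$ by a canonical class $\delta_a$ attached to the symbol $a$ and by its images under the motivic Milnor operations $Q_0,\dots,Q_n$; vanishing of the lower Margolis homologies (the $v_n$-property of $\bar V_a$) keeps this module of the expected small size. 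Voevodsky and Suslin--Joukhovitski establish distinguished triangles, assembled from the skeletal filtration of $\mathcal X$ and the cycles $y,\dots,y^{p-1}$, that relate $R_a\otimes\bZ/p$ to the reduced motive $\tilde M(\mathcal X)\otimes\bZ/p$ and its Tate twists $(ib_n)[2ib_n]$, $0\le i\le p-1$; feeding the description of $H^{*,*'}(\mathcal X;\bZ/p)$ into these triangles produces $H^{*,*'}(R_a;\bZ/p)$ as the asserted module, the torsion generators $c_j(y^i)$ ($1\le j\le n-1$) arising from the action of the Milnor operation $Q_j$, which is why they are $p$-torsion and why they sit in the stated codimensions. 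To reach integral coefficients I would run the Bockstein long exact sequence
\[ \cdots\to CH^*(R_a)\xrightarrow{\,p\,}CH^*(R_a)\to H^{2*,*}(R_a;\bZ/p)\to\cdots \]
together with torsion freeness of $CH^*(\bar R_a)$ and the fact that $\Img\bigl(CH^*(R_a)\to CH^*(\bar R_a)\bigr)=\bZ_{(p)}\{1\}\oplus\bigoplus_{i=1}^{p-1}\bZ_{(p)}\{p\,y^i\}$ --- the class $y^i$ itself not being defined over $k$ since, by [Ro1], $V_a$ carries no zero-cycle of degree prime to $p$. This forces a free generator $c_0(y^i)$ in codimension $ib_n$ with $c_0(y^i)\mapsto p\,y^i$, leaving $c_1(y^i),\dots,c_{n-1}(y^i)$ as the $p$-torsion subgroup, which is the displayed decomposition of $CH^*(R_a)$.

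\emph{Main obstacle, and an alternative.} The whole weight rests on the mod $p$ step: constructing the triangles that tie $R_a$ to $\mathcal X$ and evaluating $H^{*,*'}(\mathcal X;\bZ/p)$. That is precisely the deep core of the Rost--Voevodsky program --- the existence of norm varieties carrying special correspondences, and the motivic Steenrod-algebra computations that annihilate the lower Margolis homologies --- whereas Rost nilpotence, the identification of $\bar R_a$, and the Bockstein bookkeeping are formal. A second route for the computation over $k$, pursued in \S4--\S5, is to compute first the algebraic cobordism $\Om(R_a)$ (or its $BP$-version $\OBP(R_a)$), where the answer is a transparent quotient of a free $BP^*$-module, and then specialize along $CH^*(-)=\Om(-)\otimes_{\mathbb{L}}\bZ$; this replaces the simplicial-motive input with cobordism computations but meets essentially the same obstruction.
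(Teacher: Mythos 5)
Your proposal agrees with the paper's route at its core: compute $H^{*,*'}(\tilde\chi_a;\bZ/p)$ (Theorem 3.1) and transport it through the distinguished triangles (3.2)--(3.4) to the symmetric power $M_a^{p-1}$, which is exactly how the paper deduces Corollary 3.2 and from it Theorem 2.1. You depart in two places. First, in how $R_a$ is produced: the paper (following Voevodsky [Vo2,4]) takes $R_a=M_a^{p-1}=S^{p-1}(M_a)$, built directly from $\chi_a$ via (3.2), and only afterwards proves it embeds in $M(V_a)$; you instead assemble a projector on $M(\bar V_a)$ from the powers $y^i$ and Poincar\'e-dual partners, and descend it by Rost nilpotence using Rost's special correspondence. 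Both constructions are standard, but the Voevodsky route has the advantage that the triangles (3.3)--(3.4) that drive the Chow-group computation come for free; with your route you would still need a Krull--Schmidt argument to identify the descended summand with $M_a^{p-1}$ before you may appeal to that triangle machinery.

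The genuine gap is in the passage from $CH^*(R_a)/p$ to $CH^*(R_a)$. You assert that
\[\Img\bigl(CH^*(R_a)\to CH^*(\bar R_a)\bigr)=\bZ_{(p)}\{1\}\oplus\bigoplus_{i=1}^{p-1}p\,\bZ_{(p)}\{y^i\},\]
justified by ``$y^i$ is not defined over $k$.'' That observation only shows the image in codimension $ib_n$ is a proper subgroup of $\bZ_{(p)}\{y^i\}$, hence of the form $p^k\bZ_{(p)}\{y^i\}$ for some $k\ge 1$; it does not give $k=1$. Establishing that $p\,y^i$ is rational --- equivalently, that $c_0(y^i)$ restricts to $py^i$ modulo $p^2$ and not to $0$ --- is a separate nontrivial input. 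The paper extracts it from the algebraic cobordism computation (cf.\ Lemma 3.11 and the relation $\Res(c_0)=py \bmod (p^2,BP^{<0})$ used in the proof of Corollary 4.6); alternatively one can argue via degree or transfer on the norm variety. Without that input your Bockstein bookkeeping pins down neither the free rank nor the exact torsion in each codimension. You do flag the cobordism-first alternative at the end; that is in fact how the cited sources [Ya4], [Vi-Ya] settle the integral structure, and it is the cleanest way to close this gap.
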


Let $G$ be a simply connected compact Lie group
(and $T$ its maximal torus)
and $G_k$ be the corresponding split reductive group
over $k$ and $T_k$ its split torus.  Let $\bG_k$ be a nontrivial
$G_k$-torsor, and $\bG_k/B_k$ the (twisted form of) complete
flag variety. Let us simply write by $X$ the twisted flag variety $\bG_k/B_k$,
  We are interesting in the motivic cohomology
$H^{*,*'}(X;\bZ/p)$. (For a Borel subgroup $B\supset T$,
note $G_k/B_k$ is cellular and $H^{*,*'}(G_k/B_k;\bZ/p)\cong H^{2*}(G/T;\bZ/p)$.)

We only consider here the cases
\[ gr H^*(G;\bZ/p)\cong \bZ/p[y]/(y^p)\otimes 
\Lambda(x_1,...,x_{\ell})\]
where the degree are $|y|=even$, and $|x_i|=odd$.
The following simple Lie groups satisfy 
the above isomorphism ;
\[ (G,p)=\begin{cases}   G_2,\ F_4,\ E_6, \ Spin(7),\ Spin(8),\ Spin(9)\quad for \ p=2\\ F_4,\ E_6,\ E_7\quad for \ p=3\\
E_8\quad p=5.
\end{cases}\]
We call that these (simply connected) groups $G$ are of type $(I)$.
It is known  $\ell\ge 2(p-1)$.

There is a fibering
$G\to G/T\to BT$ and the induced spectral sequence
(see for details, [Tod]. [To-Wa], [Mi-Ni], [Na])
\[ E_2^{*,*'}\cong H^*(G;\bZ/p)\otimes H^*(BT)\]
\[\cong \bZ/p[y]/(y^p)\otimes\Lambda(x_1,...,x_{\ell})\otimes
S(t)\Longrightarrow
H^*(G/T;\bZ/p)\]
where $S(t)=H^*(BT)\cong \bZ_{(p)}[t_1,...,t_{\ell}]$ with $|t_i|=2$.  Then it is known ([Tod]) that
there is a regular sequence $(b_1,...,b_{\ell})$ in $S(t)$
such that $d_{|x_i|+1}(x_i)=b_i$ for the differential in the spectral sequence.
Hence we can write
\[ grH^*(G/T;\bZ/p)\cong \bZ/p[y]/(y^p)\otimes
S(t)/(b_1,..., b_{\ell}).
\]
Let us write
$S(t)/(p,b_1,...,b_{\ell})$ simply by $S(t)/(b)$.
\begin{thm} ([Tod])  We have the ring isomorphism
\[ H^*(G/T;\bZ/p)\cong (\bZ/p[y]\otimes S(t)/(b,f_p),\quad 
f_p=y^p\ mod(S(t)^+).\]
\end{thm}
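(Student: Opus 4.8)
The plan is to read the ring structure off the associated-graded description already displayed, treating Theorem 2.2 as a multiplicative extension problem in the Serre spectral sequence of $G\to G/T\to BT$. I take as given, from [Tod] as quoted, that the $x_i$ are transgressive with $d_{|x_i|+1}(x_i)=b_i$, that $(b_1,\dots,b_\ell)$ is a regular sequence in $S(t)$, and hence that $gr\,H^*(G/T;\bZ/p)\cong\bZ/p[y]/(y^p)\otimes S(t)/(b)$ for the multiplicative Serre filtration, in which classes pulled back from $BT$ have fibre degree $0$ and $y$ has fibre degree $|y|$. A first consequence I would record: the ring map $S(t)=H^*(BT;\bZ/p)\to H^*(G/T;\bZ/p)$ has image the bottom filtration piece, which by the displayed $gr$ is the fibre-degree-$0$ summand $S(t)/(b)$; comparing Hilbert series degree by degree shows its kernel is exactly $(b_1,\dots,b_\ell)$, so $S(t)/(b)$ sits inside $H^*(G/T;\bZ/p)$ as a graded subring and each $b_i$ maps to $0$.

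Next I would produce $y$ and the relation. Choose any lift $\tilde y\in H^{|y|}(G/T;\bZ/p)$ of $y$. The monomials $\tilde y^{j}m$ with $0\le j\le p-1$ and $m$ ranging over a $\bZ/p$-basis of $S(t)/(b)$ have images in $gr\,H^*$ forming a basis, hence are linearly independent in $H^*(G/T;\bZ/p)$; since there are $p\cdot\dim_{\bZ/p}S(t)/(b)=\dim_{\bZ/p}gr\,H^*=\dim_{\bZ/p}H^*(G/T;\bZ/p)$ of them (this common number being $|W|$, as $G/T$ is cellular with $|W|$ cells), they form a basis. Thus $H^*(G/T;\bZ/p)=\bigoplus_{j=0}^{p-1}\bigl(S(t)/(b)\bigr)\tilde y^{j}$ as a module over $S(t)/(b)$, and in particular $\tilde y^{p}=\sum_{j=0}^{p-1}g_j\tilde y^{j}$ with uniquely determined $g_j\in S(t)/(b)$.

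Now I would pin down the $g_j$ by restricting to the fibre $G$. There $\tilde y|_G=y$, while every positive-degree class pulled back from $BT$ restricts to $0$, so $0=y^p=\sum_j\epsilon(g_j)\,y^j$ in $H^*(G;\bZ/p)=\bZ/p[y]/(y^p)\otimes\Lambda(x_1,\dots,x_\ell)$; linear independence of $1,y,\dots,y^{p-1}$ there forces each $g_j$ to have vanishing constant term, i.e. $g_j\in(S(t)/(b))^+$. Lifting the $g_j$ to $\bar g_j\in S(t)^+$ and setting $f_p=y^p-\sum_j\bar g_j\,y^j$, we get $f_p\equiv y^p\pmod{S(t)^+}$ with $f_p$ monic of degree $p$ in $y$. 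The ring surjection $\bZ/p[y]\otimes S(t)\to H^*(G/T;\bZ/p)$ with $y\mapsto\tilde y$ (surjective since $\tilde y$ and the image of $S(t)$ generate, by the usual filtered-ring argument of lifting ring generators of $gr\,H^*$ and inducting down the bounded filtration) kills every $b_i$ and $f_p$, hence factors through $\bZ/p[y]\otimes S(t)/(b,f_p)\twoheadrightarrow H^*(G/T;\bZ/p)$; since $f_p$ is monic of degree $p$ in $y$, the source is a free module of rank $p$ over $S(t)/(b)$, of the same $\bZ/p$-dimension $p\cdot\dim_{\bZ/p}S(t)/(b)$ as the target. A surjection of finite-dimensional spaces of equal dimension is an isomorphism, which is the asserted formula.

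Granting the quoted input of [Tod], the step I expect to be the real obstacle is the middle one: showing that the extension $\tilde y^{p}$ carries no constant term, and more basically that the $\tilde y^{j}m$ give a genuine basis. This is exactly where the hypotheses enter essentially — the even-degree concentration of $gr\,H^*$ makes the $S(t)/(b)$-module decomposition clean, and the explicit shape of $H^*(G;\bZ/p)$, in which $y^p=0$, is what forces $\tilde y^{p}$ into the ideal generated by $S(t)^+$. The genuinely deep content — the regularity of $(b_1,\dots,b_\ell)$ and the transgression formula $d_{|x_i|+1}(x_i)=b_i$ — I import from [Tod].
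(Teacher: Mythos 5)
The paper itself does not prove Theorem 2.2 --- it is stated as a citation to Toda [Tod], with the only supporting material being the associated-graded description coming from the transgressions $d_{|x_i|+1}(x_i)=b_i$ in the Serre spectral sequence of $G\to G/T\to BT$. Your reconstruction of the multiplicative extension argument is correct and is, as far as I can tell, the standard route that Toda follows: identify $S(t)/(b)$ as a subring via the edge homomorphism, show that $1,\tilde y,\dots,\tilde y^{p-1}$ give an $S(t)/(b)$-basis by comparing with $gr$, kill the constant term of $\tilde y^{p}$ by restriction to the fibre, and close with a dimension count against the rank-$p$ free $S(t)/(b)$-module $(\bZ/p[y]\otimes S(t))/(b,f_p)$.

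One small caution about wording at the fibre-restriction step. You write $H^*(G;\bZ/p)=\bZ/p[y]/(y^p)\otimes\Lambda(x_1,\dots,x_\ell)$, but the paper deliberately says only $gr\,H^*(G;\bZ/p)\cong\bZ/p[y]/(y^p)\otimes\Lambda(x_1,\dots,x_\ell)$; the $gr$ is genuinely needed because, e.g., $x_3^2=y$ in $H^*(G_2;\bZ/2)$, so the exterior part is not exterior on the nose. What your argument actually uses is only the single relation $y^p=0$ in the genuine ring $H^*(G;\bZ/p)$, which does hold for every group of type $(I)$ (it is part of what ``only one even truncated-polynomial generator'' means), so the step is sound; you should invoke $y^p=0$ directly rather than the full tensor-product presentation. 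With that phrasing fixed, the proposal is a complete and correct proof of the quoted theorem.
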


Petrov, Semenov and Zainoulline [Pe-Se-Za] prove that
when $G$ is a  group of type $(I)$, the motive 
$M(X)$ (which is localized at $p$) is 
decomposed as a sum of (degree changing) Rost motive $R_2$, namely,
\[ M(X)\cong R_2\otimes (\oplus_{s} \bT^{\otimes s}).\]
where $\bT^{\otimes s}$ is the (degree changing) Tate motive.  Hence we easily see that
$CH^*(\oplus_s\bT^{\otimes s})/p\cong S(t)/(b)$, and 
\[ H^{*,*'}(\oplus_s\bT^{\otimes s};\bZ/p)\cong H^{*,*'}(pt.;\bZ/p) \otimes S(t)/(b).\]
Hence additively
$H^{*,*'}(X;\bZ/p)\cong H^{*,*'}(R_2;\bZ/2)\otimes S(t)/(b).$
In particular, we have additively
\[ CH^*(X)/p\cong \bZ/p\{1,c_{0}(y),c_{1}(y),...,c_{0}(y^{p-1}),
c_{1}(y^{p-1})\}\otimes S(t)/(b).\]

On the other hand, the following additive isomorphism
is immediate
\[  S(t)/(  b_i b_j, b_{k})
\cong \bZ/p\{1, b_1,..., b_{2p-2}\}
\otimes S(t)/(b)\]
where $i,j,k$ range $1\le i,j\le 2p-2< k\le \ell$. 
We can take $c_1(y^i)=b_{2i-1}$ and $c_0(y^i)=b_{2i}$,
and prove the following theorem.
\begin{thm} ([Ya7])
Let $G$ be a simple Lie group of type $(I)$.
Let $X=\bG_k/T_k$ for a nontrivial torsor $\bG_k$.
Then we have the ring isomorphism
\[ CH^*(X)/p\cong  S(t)/(p,  b_ib_j, b_{k}).\]
where $i,j,k$ range  $ 1\le i,j\le 2(p-1)<k\le \ell.$
\end{thm}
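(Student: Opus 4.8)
\emph{Proof proposal.} The plan is to compare, via the characteristic map and restriction to $\bar k$, a short exact sequence for $S(t)/(p,b_ib_j,b_k)$ with one for $CH^*(X)/p$, having first reduced everything to two facts about the Rost motive $R_2$. (Here I may replace $\bG_k/T_k$ by $\bG_k/B_k$: the projection $\bG_k/T_k\to\bG_k/B_k$ is an affine bundle, so the two Chow rings agree.)

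First I would introduce the characteristic map. For a character $\lambda$ of $T_k$, extended to the Borel $B_k$, one has an associated line bundle $L_\lambda$ on $X$, and sending $\lambda$ to $c_1(L_\lambda)$ extends to a ring homomorphism $\bar\theta\colon S(t)/p\to CH^*(X)/p$. Base changed to $\bar k$ this is the classical characteristic map of $G/T$, so by Theorem 2.2 together with $CH^*(G/T)/p\cong H^{2*}(G/T;\bZ/p)$ the composite of $\bar\theta$ with the restriction $\Res\colon CH^*(X)/p\to CH^*(\bar X)/p$ has image the subring $S(t)/(b)$ and kernel exactly the ideal generated by $b_1,\dots,b_\ell$, while none of the classes $y^i$ is in that image. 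Combining this with $M(X)\cong R_2\otimes(\oplus_s\bT^{\otimes s})$, the additive description of $CH^*(X)/p$ recalled above, and the fact that $c_\epsilon(y^i)$ dies in $CH^*(\bar R_2)/p$ (Theorem 2.1), one gets a short exact sequence
\[ 0\to \Ker(\Res)\to CH^*(X)/p\xrightarrow{\ \Res\ } S(t)/(b)\to 0 , \]
where $\Ker(\Res)$ is the free $S(t)/(b)$-module
\[ \Ker(\Res)\cong \bigoplus_{i=1}^{p-1}\bigoplus_{\epsilon=0,1}\bZ/p\{c_\epsilon(y^i)\}\otimes S(t)/(b) \]
(the $S(t)/(b)$-action being well defined on $\Ker(\Res)$ thanks to fact (i) below).

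Now the proof reduces to two facts. (i) The augmentation ideal $CH^{>0}(R_2)/p$ is square-zero: modulo $p$ the classes $c_0(y^i)$ restrict to $0$, so each product $c_0(y^a)c_0(y^b)$ is divisible by $p$; products of two torsion classes and products landing in codimension $>\dim\bar R_2$ vanish for degree reasons; and the only remaining products $c_0(y^a)c_1(y^b)$ with $a+b\le p-1$ are zero by the known ring structure of the Rost motive ([Me-Su],[Ya4]). Hence $c_\epsilon(y^a)c_{\epsilon'}(y^b)=0$ in $CH^*(X)/p$ for all $a,b\ge1$. (ii) One can order the regular sequence so that $|b_{2i-1}|=|c_1(y^i)|$ and $|b_{2i}|=|c_0(y^i)|$ for $1\le i\le p-1$, and then $\bar\theta(b_{2i-\epsilon})$ is a unit multiple of $c_\epsilon(y^i)$, while $\bar\theta(b_m)=0$ for $m>2(p-1)$: indeed $\bar\theta(b_m)$ lies in $\Ker(\Res)$, which is one-dimensional in the degree of $b_{2i-\epsilon}$ (spanned by $c_\epsilon(y^i)$) and vanishes in the other degrees with $m>2(p-1)$; nonvanishing of $\bar\theta(b_{2i-\epsilon})$ holds because the pure symbol defining $R_2$ is nonzero, i.e. because the torsor $\bG_k$ is nontrivial.

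Granting (i) and (ii), the relations $b_ib_j$ ($1\le i,j\le 2(p-1)$) and $b_k$ ($2(p-1)<k\le\ell$) lie in $\Ker\bar\theta$, so $\bar\theta$ factors through a ring homomorphism $\psi\colon S(t)/(p,b_ib_j,b_k)\to CH^*(X)/p$. By the (immediate) additive isomorphism $S(t)/(p,b_ib_j,b_k)\cong \bZ/p\{1,b_1,\dots,b_{2p-2}\}\otimes S(t)/(b)$, the domain also sits in a short exact sequence with quotient $S(t)/(b)$ and submodule $\bigoplus_{m=1}^{2p-2}b_m\cdot S(t)/(b)$, and $\psi$ is a morphism of the two sequences: it is the identity on the quotients $S(t)/(b)$ (compatibility of $\bar\theta$ with $\Res$), and on the submodules it sends $b_{2i-\epsilon}\cdot\bar s$ to $\bar\theta(b_{2i-\epsilon})\bar\theta(s)=c_\epsilon(y^i)\otimes\bar s$ up to a unit (for any lift $s$ of $\bar s$), since by (i) all cross terms are vanishing products $c_\epsilon(y^i)c_{\epsilon'}(y^j)$; by (ii) and the degree matching this is an isomorphism onto $\Ker(\Res)$. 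The five lemma then shows $\psi$ is an isomorphism, which is the claim. The main obstacle is fact (ii): that the ``extra'' cycles $c_\epsilon(y^i)$ of the Rost motive are, for a nontrivial torsor, already in the image of the degree-one characteristic map --- equivalently that $CH^*(X)/p$ is generated in codimension one, in sharp contrast to the split case $G/T$ where $y$ is not a polynomial in divisor classes. This is exactly where the geometry of the norm variety and the nonvanishing of the symbol enter, and is the heart of the computation in [Ya7]; the square-zero property (i) is comparatively routine given the known structure of $CH^*(R_n)$.
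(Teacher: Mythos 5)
The overall framework you set up — the characteristic map $\bar\theta\colon S(t)/p\to CH^*(X)/p$, the short exact sequence with quotient $S(t)/(b)$ and kernel $\Ker(\Res)$, the factoring of $\bar\theta$ and the five lemma — is a sensible scaffolding, and you are right that the two facts (i) and (ii) are exactly what is needed. You are also right, and candid, that (ii) is the substantive geometric input; the paper itself does not prove the theorem here but only cites [Ya7], offering the one-line hint ``we can take $c_1(y^i)=b_{2i-1}$ and $c_0(y^i)=b_{2i}$''. So the comparison can only be to what a complete argument would need, and there your proposal has a real gap.

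The dimension-counting ``proof'' you offer for (ii) is incorrect. $\Ker(\Res)$ is a free $S(t)/(b)$-module on the $2(p-1)$ generators $c_\epsilon(y^i)$, so in a fixed codimension it is generally far from one-dimensional: for instance in $\codim=p+1$ (the degree of $b_2=c_0(y)$) the kernel also contains $c_1(y)\cdot S(t)^{p-1}/(b)$, which is nonzero as soon as $p\ge 3$. Likewise $\Ker(\Res)$ certainly does not vanish in the degrees of the $b_m$ with $m>2(p-1)$, for the same reason. So ``$\bar\theta(b_{2i-\epsilon})$ lies in a one-dimensional kernel'' does not establish that it is a unit multiple of $c_\epsilon(y^i)$, and ``$\bar\theta(b_k)$ lies in a vanishing kernel'' does not establish $\bar\theta(b_k)=0$. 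This is precisely the content one must extract from the geometry (restriction formula $\Res(c_0)=py$ modulo lower terms, the behaviour of the norm variety, the Atiyah--Hirzebruch/algebraic cobordism argument as in [Ya7]); the degree bookkeeping alone cannot see it. Your own final paragraph acknowledges this, but it sits in tension with the purported dimension argument you give two sentences earlier. There is also a smaller issue in (i): from $\Res(c_0(y^a))\in p\cdot CH^*(\bar X)$ you conclude $c_0(y^a)c_0(y^b)\in p\cdot CH^*(X)$, but the restriction map on integral Chow groups need not be injective, so divisibility of the restriction does not a priori give divisibility upstairs; one really needs to use the $\Omega^*$-relations (as in Lemmas 4.5--4.6 or Lemma 3.11) to control these products. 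In summary: the reduction to (i) and (ii) is fine, (i) needs a cobordism-level argument rather than a restriction-and-degree argument, and (ii) is left genuinely unproved — and the heuristic offered for it would fail.
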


\section{motivic cohomology of the Rost motive}

Recall $V_a$ is a norm variety for a nonzero symbol $a\in K_{n+1}^*(k)$.
Let $\chi_a=\check C(V_a)$ be the Cech simplex sheaf for $V_a$ and
$\tilde \chi_a$ be the object defined by the cofibering
$\tilde \chi \to \chi\to pt$ in the stable $\bA^1$-homotopy category. 

By the solution of Bloch-Kato conjecture by Voevodsky, we see  the  exact sequence
\[(3.1)\quad 0 \to H^{*+1,*}(\chi _a;\bZ/p)
\stackrel{\times \tau}{\to }K_{*+1}^M(k)/p 
\to K_{*+1}^M(k(V_a))/p\]
identifying $H^{*+1,*+1}(\chi_a;\bZ/p)\cong K_{*+1}^M(k)$.
Since $a=0\in K_{n+1}^M(k(V_a))/p$, there is unique element 
$a'\in H^{n+1,n}(\chi_a;\bZ/p)$ such that $\tau a'=a$.

Let $M_a$ be the object in the derived category $\bD\bM$
of mixed motives [Vo2,4] defined by 
the following distinguished triangle
\[(3.2)\quad M(\chi_a(b_n))[2b_n]\to M_a  \to M(\chi_a)
\stackrel{\delta_a=Q_0...Q_{n-1}(a')}{\to} M(\chi_a)(b_n)[2b_n+1] \]
where  $b_n=(p^n-1)/(p-1)=p^{n-1}+...+p+1$
so that $deg(\delta_a)=(2b_n+1,b_n)$.
For $i<p$, define the symmetric powers
\[M_a^{i}=S^{i}(M_a)=q_i(M_a^{\otimes i})\subset M_a^{\otimes i}\]
where $q_i=(1/i!)\sum_{\sigma \in S_{i}}\sigma$
is a projector in $\bD\bM$,  and $\sigma$ 
is the symmetric group of $i$ letters.
One of the important results in [Vo2,4] Voevodsky proved is that 
$M_a^{p-1}$ is a direct summand of a motive of $V_a$
(for details see [Vo2,4]). 
This $M_a^{p-1}$ is written by $R_a$ (and is called the Rost
motive) in preceding sections.

Hence there are distinguished triangles ( (5.5),(5.6) in [Vo4])
\[(3.3)\quad M_a^{i-1}(b_n)[2b_n]\to M_a^i \to  M(\chi_a) 
\stackrel{s_i}{\to} M_a^{i-1}(b_n)[2b_n+1]\]
\[(3.4)\quad M(\chi_a)(b_ni)[2b_ni] \to M_a^i \to M_a^{i-1}
\stackrel{r_i}{\to}
M(\chi_a)(b_ni)[2b_ni+1].\]


For each prime $p$, we have the Milnor operation
\[ Q_i: H^{*,*'}(X;\bZ/p)\to H^{*+2p^i-1,*'+p^i}(X;\bZ/p).\]
 Let us write by $Q(n)$ the exterior algebra
$\Lambda(Q_0,...,Q_0)$. (We see $Q_iQ_j=-Q_jQ_i$ for all $p$ and 
$Q_i^2=0$ also for $p=2$.)

\begin{thm} (Theorem 8.5 in [Ya4])
 Let $0\not =a=(a_0,...,a_n)\in K_{n+1}^M(k)/p$.  Then 
there is a $K_*^M(k)\otimes Q(n)$-modules isomorphism
\[H^{*,*'}(\tilde{\chi}_a;\bZ/p)\cong K_*^M(k)/(Ker(a))\otimes 
Q(n)\otimes \bZ/p[\xi_{a}]
\{a'\}\]
where $\xi_{a}=Q_nQ_{n-1}....Q_0(a')$ and  $deg(a')=(n+1,n)$.
\end{thm}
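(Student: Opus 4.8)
The plan is to compute $H^{*,*'}(\tilde\chi_a;\bZ/p)$ by combining the known cohomology of the Cech simplex $\chi_a$ (from the exact sequence (3.1)) with the structure of the motive $M_a$ and its symmetric powers, but in fact it is cleaner to proceed by induction directly on $\tilde\chi_a$ using the Milnor operations. First I would recall that $H^{*,*'}(\chi_a;\bZ/p)$ sits inside $H^{*,*'}(\mathrm{pt};\bZ/p)$ (via $\chi_a\to\mathrm{pt}$) and, by Voevodsky's solution of Bloch--Kato together with (3.1), the image is exactly the part killed by the symbol $a$; dually, $H^{*,*'}(\tilde\chi_a;\bZ/p)$ is a module over $K^M_*(k)\otimes Q(n)$ generated in a range determined by the element $a'\in H^{n+1,n}(\chi_a;\bZ/p)$ with $\tau a'=a$. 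The key structural input is that the Milnor operations $Q_0,\dots,Q_n$ act on $H^{*,*'}(\tilde\chi_a;\bZ/p)$, that $Q_iQ_j=-Q_jQ_i$, $Q_i^2=0$, and that the composite $\xi_a=Q_nQ_{n-1}\cdots Q_0$ applied to $a'$ is (up to the periodicity built into the Rost motive via $y$) an invertible-type element, so that $\bZ/p[\xi_a]$ appears as a polynomial factor.

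The main steps, in order: (1) Identify $H^{*,*'}(\chi_a;\bZ/p)$ with the subring of $K^M_*(k)/p\otimes\bZ/p[\tau,\tau^{-1}]$-type classes annihilated by $a$, using (3.1) and the identification $H^{*,*'}(\chi_a;\bZ/p)\cong K^M_*(k)$ in weight $=*$; this pins down $a'$ uniquely. (2) Use the cofiber triangle $\tilde\chi_a\to\chi_a\to\mathrm{pt}$ to get a long exact sequence relating $H^{*,*'}(\tilde\chi_a)$ to the cokernel of $H^{*,*'}(\mathrm{pt})\to H^{*,*'}(\chi_a)$, which by step (1) is $K^M_*(k)/(\mathrm{Ann}(a))$ — this produces the factor $K^M_*(k)/(\mathrm{Ker}(a))$ and the generator $a'$. (3) Bring in the Milnor operations: since $\delta_a=Q_0\cdots Q_{n-1}(a')$ is the connecting map defining $M_a$ (see (3.2)), iterating $Q_i$'s on $a'$ generates the exterior factor $Q(n)=\Lambda(Q_0,\dots,Q_n)$ acting freely, and the top operation $\xi_a=Q_n\cdots Q_0(a')$ generates a free polynomial action because in $H^{*,*'}(\tilde\chi_a)$ there is no torsion obstruction coming from $\bar\chi_a\simeq\mathrm{pt}$ over $\bar k$ (so all these classes are $\tau$-torsion and survive). (4) Assemble: show the natural map $K^M_*(k)/(\mathrm{Ker}(a))\otimes Q(n)\otimes\bZ/p[\xi_a]\{a'\}\to H^{*,*'}(\tilde\chi_a;\bZ/p)$ is an isomorphism by a weight/degree count, comparing with the known additive structure of $H^{*,*'}(R_a;\bZ/p)$ coming from the filtration (3.3)–(3.4) on $M_a^{p-1}$ and the fact that $H^{*,*'}(\tilde\chi_a)$ is the ``unstable/vanishing'' part complementary to $\mathrm{pt}$.

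The hard part will be step (3)–(4): proving that $\bZ/p[\xi_a]$ really acts \emph{freely} (polynomially, not truncated) on the generator $a'$, and that there are no further relations beyond $\mathrm{Ker}(a)$ on the Milnor $K$-theory coefficients. Over $\bar k$ the Rost motive degenerates ($CH^*(\bar R_a)\cong\bZ_{(p)}[y]/(y^p)$ is truncated), so one must carefully separate the $\tau$-torsion classes in $\tilde\chi_a$ — which are genuinely polynomial in $\xi_a$ — from the $\tau$-divisible/$\bar k$-visible classes, which live in $\chi_a$ rather than $\tilde\chi_a$. I would handle this by the standard bookkeeping: $H^{*,*'}(\tilde\chi_a;\bZ/p)$ is concentrated where $\tau$ acts locally nilpotently, and on that part the triangles (3.3), (3.4) together with the action of $Q_n$ (which raises weight by $p^n$) force the polynomial growth; a direct comparison of Poincaré series on each side then finishes the proof. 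This is essentially the argument of Theorem 8.5 in [Ya4], and I would cite that computation for the detailed verification rather than reproduce it.
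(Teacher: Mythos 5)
The paper itself does not prove this statement: Theorem 3.1 is imported verbatim from [Ya4] (Theorem 8.5 there), and no argument is given here. So the comparison is really between your sketch and the argument in [Ya4], and on that score your outline has the right silhouette but skips over the two things that actually carry the weight of the theorem.

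First, the assertion that $Q(n)=\Lambda(Q_0,\dots,Q_n)$ acts \emph{freely} on $a'$ is the heart of the matter, and you do not give any reason for it. This is a consequence of Voevodsky's vanishing of the motivic Margolis homologies of $\tilde\chi_a$: one needs $\widetilde{MH}^{*,*'}_i(\tilde\chi_a)=0$ for $0\le i\le n$, which in turn rests on the fact that $V_a$ is a $\nu_n$-variety, on the symmetric-power construction (3.2)--(3.4), and on the surjectivity/duality statements in [Vo2,Vo4]. Your phrase ``iterating $Q_i$'s on $a'$ generates the exterior factor $Q(n)$ acting freely'' names the conclusion, not the mechanism; nothing in your steps (1)--(3) explains why there are no exterior relations. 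Likewise, the polynomial (as opposed to truncated) nature of $\bZ/p[\xi_a]$ does not follow from ``no torsion obstruction coming from $\bar\chi_a\simeq\mathrm{pt}$'': over $\bar k$ everything in $\tilde\chi_a$ vanishes, so the base-change to $\bar k$ gives you no information at all about upper bounds. The polynomial behaviour comes from the explicit inverse-limit/telescope description of the Čech simplicial object and the induced $\xi_a$-periodicity; that has to be argued, not asserted.

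Second, step (4) is circular in the logic of this paper. You propose to pin down the Poincaré series by comparing with the additive structure of $H^{*,*'}(R_a;\bZ/p)$ via the filtration (3.3)--(3.4). But in the paper the cohomology of $R_a=M_a^{p-1}$ (Theorem 3.3 and the lemmas leading to it) is \emph{deduced} from Theorem 3.1 together with (3.3)--(3.4); you cannot then turn around and use $H^{*,*'}(R_a)$ to verify Theorem 3.1. Smaller imprecisions: in step (2) the cokernel of $H^{*,*'}(\mathrm{pt})\to H^{*,*'}(\chi_a)$ is not $K_*^M(k)/\mathrm{Ker}(a)$; it is (once freeness is known) the whole $K_*^M(k)/\mathrm{Ker}(a)$-module on the generators $Q_I(a')\xi_a^j$, and one also has to check that the $K_*^M(k)$-annihilator of every such class is exactly $\mathrm{Ker}(a)$, which again uses the $Q(n)$-freeness. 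Also, if $\xi_a$ were ``invertible-type'' one would get a Laurent factor $\bZ/p[\xi_a,\xi_a^{-1}]$, not $\bZ/p[\xi_a]$. In short: your step (3) names the key claims without the Margolis-homology input that proves them, and your step (4) would not stand as an independent check because it relies on a downstream consequence of the theorem being proved.
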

 
By using the above theorem we have Theorem 2.1   
   in $\S 2$.
     \begin{cor} Let $c_i=Q_0...\hat Q_i...Q_{n-1}(a')$ (hence $|c_i|=2(b_n-p^i+1)$).
  Then
     there is the additive isomorphism
   \[ CH^*(M_a^{p-1})/p\cong \bZ/p\{1\}\oplus 
   \bZ/p[t]/(t^{p-1})\{c_0,...,c_{n-1}\}\]
where $deg(t)=(2b_n,b_n)$.
   \end{cor}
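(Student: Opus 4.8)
The plan is to compute the Chow part $\bigoplus_m H^{2m,m}(M_a^{p-1};\bZ/p)$ of the motivic cohomology of $M_a^{p-1}$ by induction on $i$ from the distinguished triangles (3.3)--(3.4), using the above Theorem (the computation of $H^{*,*'}(\tilde\chi_a;\bZ/p)$) at the bottom of the induction; the outcome should then match Theorem 2.1 reduced modulo $p$, with $c_i(y)$ realized by the explicit Milnor monomials $c_i=Q_0\cdots\hat Q_i\cdots Q_{n-1}(a')$. First I would record $H^{*,*'}(\chi_a;\bZ/p)$ from the cofibre sequence $\tilde\chi_a\to\chi_a\to Spec(k)$ together with the above Theorem: on the locus $*=2*'$ one gets $H^{0,0}(\chi_a;\bZ/p)=\bZ/p$ and, for $m>0$, $H^{2m,m}(\chi_a;\bZ/p)\cong H^{2m,m}(\tilde\chi_a;\bZ/p)$, the latter being the bidegree-$(2m,m)$ part of the monomials $\alpha\,Q_I(a')\,\xi_a^s$ of the above Theorem (with $Q_I=\prod_{j\in I}Q_j$, $I\subseteq\{0,\dots,n-1\}$, $s\ge 0$, $\alpha\in K^M_*(k)/\Ker(a)$, up to $\tau$-multiples). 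Inspecting bidegrees shows in particular that the monomials $c_i=Q_0\cdots\hat Q_i\cdots Q_{n-1}(a')$, $0\le i\le n-1$, lie on the diagonal with $|c_i|=2(b_n-p^i+1)$, and that these (together with certain $\xi_a$-monomials and the class $1$) are the only field-independent diagonal classes of $\chi_a$; every remaining diagonal class involves a positive-degree Milnor symbol $\alpha$.

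Next I would run the long exact cohomology sequences of the triangles, beginning with (3.2), i.e.\ (3.3) for $i=1$, whose connecting map $M(\chi_a)\stackrel{\delta_a}{\to}M(\chi_a)(b_n)[2b_n+1]$ acts on cohomology as cup product with $\delta_a=Q_0\cdots Q_{n-1}(a')\in H^{2b_n+1,b_n}(\chi_a;\bZ/p)$. On the diagonal this presents $H^{2m,m}(M_a;\bZ/p)$ as an extension of $\Ker$ by $\mathrm{coker}$ of $\,\cdot\,\delta_a$ between the relevant groups $H^{*,*'}(\chi_a;\bZ/p)$, together with a new Tate class $t$ of bidegree $(2b_n,b_n)$ coming from the summand $M(\chi_a(b_n))[2b_n]$ of the triangle. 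Using the module structure of the above Theorem one checks that $\,\cdot\,\delta_a$, being multiplication of the pulled-back $\alpha\in K^M_*(k)/p$ by $\delta_a$, is surjective onto the classes $\alpha\delta_a$ while annihilating the classes that must survive; hence every field-dependent diagonal class of $\chi_a$ either lies in the image of $\,\cdot\,\delta_a$ (and dies in the cokernel) or fails to lie in its kernel, and what remains is $\bZ/p\{1\}$ in degree $0$ and $\bZ/p\{t,c_0,\dots,c_{n-1}\}$ in the appropriate positive degrees. Iterating with the triangle (3.4), $M(\chi_a)(b_ni)[2b_ni]\to M_a^i\to M_a^{i-1}\stackrel{r_i}{\to}M(\chi_a)(b_ni)[2b_ni+1]$, the $b_ni$-fold Tate twist of the diagonal part of $M(\chi_a)$ contributes at stage $i$ exactly the power $t^i$ (which coincides up to the module relations with $t^{i-1}c_0$) and the twisted copies $t^{i-1}c_j$, with $r_i$ again killing the field-dependent part; since the tower of symmetric powers stops at $i=p-1$ one has $t^p=0$, so the $\bZ/p[t]$-module generated by $c_0,\dots,c_{n-1}$ is truncated at $t^{p-1}$. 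This produces the asserted additive isomorphism, with $deg(t)=(2b_n,b_n)$.

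The step I expect to be the main obstacle is the control of the connecting homomorphisms: computing cup product with $\delta_a=Q_0\cdots Q_{n-1}(a')$, and the maps $s_i$, $r_i$, on $H^{*,*'}(\chi_a;\bZ/p)$ from the module description of the above Theorem — in particular keeping careful track of the $\tau$-torsion — and then verifying that the field-dependent classes cancel in every (co)kernel up to stage $i=p-1$, so that the answer becomes independent of $k$. A convenient independent check is Theorem 2.1: the resulting bigraded $\bZ/p$-dimensions and named generators agree with $\bZ_{(p)}\{1\}\oplus\bigoplus_{i=1}^{p-1}M_n(y^i)$ reduced mod $p$, under $c_i\leftrightarrow c_i(y)\bmod p$ and $t^{i-1}c_j\leftrightarrow c_j(y^i)\bmod p$, so that the total rank is $1+(p-1)n$, as required.
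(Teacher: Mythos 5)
Your overall strategy — restrict the distinguished triangles $(3.3)$--$(3.4)$ and Theorem 3.1 to the diagonal $*=2*'$ and compute Chow by induction on the symmetric power $i$ — is exactly the route the paper has in mind (it defers the general-$k$ bookkeeping to Lemma 8.7 of [Ya4], and the Remark after Theorem 3.3 records the same module description on the domain $D'$). However, there is a concrete slip in your description of $H^{*,*'}(\tilde\chi_a;\bZ/p)$ that affects the census of diagonal classes. Theorem 3.1 involves $Q(n)=\Lambda(Q_0,\dots,Q_n)$, not $\Lambda(Q_0,\dots,Q_{n-1})$; your indexing $I\subseteq\{0,\dots,n-1\}$ drops $Q_n$. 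On the diagonal the constraint forces $|I|=n-1$ with $I\subseteq\{0,\dots,n\}$, so besides $c_i=Q_0\cdots\hat Q_i\cdots Q_{n-1}(a')$ (omit $n$ and one $i<n$) you also have the monomials $Q_nQ_0\cdots\hat Q_i\cdots\hat Q_j\cdots Q_{n-1}(a')$ with $0\le i<j\le n-1$ (omit two indices $<n$, keep $n$), plus all their $\xi$-multiples. These are field-independent diagonal classes of $\chi_a$ that do not appear in the final answer $\bZ/p\{1\}\oplus\bZ/p[t]/(t^{p-1})\{c_0,\dots,c_{n-1}\}$; they do survive to $M_a$ (compare the summand $Q(n-1)\{a't,Q_n(a')\}\otimes\bZ/p[\xi]$ in Lemma 3.6) and are only killed by $r_{p-1}^*$ at the top of the tower (Lemma 3.9). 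Your sketch never addresses them, so as written the induction does not close.

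Beyond that, the other steps you flag as "to be checked" — surjectivity and kernel of $\cdot\,\delta_a$ on the diagonal over an arbitrary base field, and the analogous behaviour of $r_i^*$ — are precisely where the paper appeals to [Ya4, Lemma 8.7] (and, for the special fields, to the explicit arguments via Lemmas 3.5, 3.7 and 3.9); they are not routine, since the diagonal and the $w$-shifted degrees it connects to under $\cdot\,\delta_a$ both carry genuine $K_*^M(k)$-content. Your appeal to Theorem 2.1 as a "sanity check" is circular if the goal is to reprove it. The fix is to (a) correct the index set to $I\subseteq\{0,\dots,n\}$ and list the extra $Q_n$-containing diagonal classes, and (b) show they map injectively under $r_{p-1}^*$ (the dimension argument in Lemma 3.9, $d(Q_n(a'))=p^n>\dim V_a$, is the key point), rather than relying on the field-dependent part cancelling by itself.
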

{\bf Remark.} Note that $t^i$ is a vertical element, i.e.,
$t^i$ itself  does not exist in $CH^*(M_a^{p-1})/p$ but $c_jt^i$ exists.

Let $K_1^M(k)/p$ be finite, e.g.,
generated by $a_1,...,a_m$.  When $p$ is odd,
 $K_*^M/p$ is isomorphic
to a quotient of the exterior algebra $\Lambda(a_1,...,a_m)$.
So $K_i^M(k)/p=0$ for all $i>m$.
   Hereafter we consider some ease cases such that $K_*^M(k)/p=0$ for $*>n+1$
and hence $K_*^M(k)/(Ker(a))=\bZ/p$.

 Let us write 
$H^{*,*'}=H^{*,*'}(Spec(k);\bZ/p)\cong K_*^M(k)/p[\tau]$
and 
\[\tilde H^{*,*'}=H^{*,*'}-\bZ/p\{1\}= \oplus_{(*,*')\not =(0,0)}H^{*,*'}(Spec(k),\bZ/p).\]
We assume that a primitive $p^2$-th root of unity exists
in $k$, that is each  primitive $p$-th root $\zeta_p$ is zero
in $K_1^M(k)/p$.  Hence we see
\[ Q_0(\tau)=\zeta _p=0\quad in\  H^{*,*'}(pt.;\bZ/p).\] 

We have the additive structure of $H^{*,*'}(M_a^{p-1};\bZ/p)
\cong H^{*,*'}(R_n;\bZ/p)$ by the following theorem.
 
          \begin{thm}
Let $k$ contain a primitive $p^2$-th root of the unity.
       Let $a\in K_{n+1}(k)/p$ be a nonzero pure 
symbol, and $K_*^M(k)/p=0$ for $*>n+1$. 
       Then there is an additive isomorphism
       \[H^{*,*'}(R_n;\bZ/p)\cong
          H^{*,*'}\oplus 
             (\tilde  Q(n-1)\{a'\}\oplus \tilde H^{*,*'}\{t\})
\{1,t,...,t^{p-2}\}\]
where  $\tilde Q(n-1)=Q(n-1)-\bZ/p\{Q_0...Q_{n-1}\}.$
           \end{thm}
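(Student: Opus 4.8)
The plan is to compute $H^{*,*'}(M_a^i;\bZ/p)$ for $i=0,1,\dots,p-1$ by induction on $i$ using the distinguished triangles (3.3) and (3.4); here $M_a^0=M(\chi_a)$, and the assertion is the case $i=p-1$ since $R_n=M_a^{p-1}$. For the base case I first need $H^{*,*'}(\chi_a;\bZ/p)$. By Theorem 3.2 together with the hypothesis $K^M_*(k)/p=0$ for $*>n+1$ (which gives $K^M_*(k)/(\Ker a)=\bZ/p$), one has $H^{*,*'}(\tilde\chi_a;\bZ/p)\cong Q(n)\otimes\bZ/p[\xi_a]\{a'\}$ with $\xi_a=Q_n\cdots Q_0(a')$, $\deg a'=(n+1,n)$. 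Every monomial $\xi_a^mQ_I(a')$ has cohomological degree strictly exceeding its weight (each $Q_i$, hence $\xi_a$, strictly raises the difference of degree and weight), whereas $H^{p,q}(pt;\bZ/p)=0$ for $p>q$; hence the long exact sequence of the cofibering $\tilde\chi_a\to\chi_a\to pt$ has all connecting maps zero, and since the groups are $\bZ/p$-vector spaces it splits: $H^{*,*'}(\chi_a;\bZ/p)\cong H^{*,*'}\oplus Q(n)\otimes\bZ/p[\xi_a]\{a'\}$. Moreover $Q_i(\tau)=0$ for every $i$ --- for $i\ge1$ by a degree count, and $Q_0(\tau)=\zeta_p=0$ by the $p^2$-th root of unity hypothesis --- so $\tau$, hence all of $\tilde H^{*,*'}$, and likewise $K^M_{\ge 1}(k)/p$, act as $0$ on the second summand, while on the first the action is the standard one; a short degree check also gives $a'\cdot a'=0$ for $n\ge1$. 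Both hypotheses on $k$ are used here.

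For the inductive step I feed $H^{*,*'}(M_a^{i-1};\bZ/p)$ into the long exact sequence of (3.4),
\[ \cdots\to H^{*,*'}(M_a^{i-1})\to H^{*,*'}(M_a^i)\to H^{*-2b_ni,\,*'-b_ni}(\chi_a)\stackrel{r_i^*}{\longrightarrow}H^{*+1,*'}(M_a^{i-1})\to\cdots, \]
so $H^{*,*'}(M_a^i)$ is an extension of $\Ker(r_i^*)$ by $\mathrm{Coker}(r_i^*)$, which splits as all groups are $\bZ/p$-vector spaces. The boundary maps $r_i^*$ (or the $s_i^*$ coming from (3.3)) are those built from $\delta_a=Q_0\cdots Q_{n-1}(a')$ in Voevodsky's construction: $r_1^*$ is multiplication by the class $\delta_a\in H^{2b_n+1,b_n}(\chi_a)$ via the comultiplication of $M(\chi_a)$, and for $i\ge2$ one reads $r_i^*$ off the triangles already used. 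Their kernels and cokernels I evaluate from the module structure of $H^{*,*'}(\tilde\chi_a)$ over $Q(n)\otimes\bZ/p[\xi_a]$: on the point summand $\delta_a$-multiplication kills everything except $1\mapsto\delta_a$, and on $Q(n)\otimes\bZ/p[\xi_a]\{a'\}$ it is controlled by the relation $\xi_a=\pm Q_n(\delta_a)$ and the splitting $Q(n)=Q(n-1)\oplus Q_n\cdot Q(n-1)$; the higher $r_i^*$ behave analogously, and in the successive kernels and cokernels the $\bZ/p[\xi_a]$-towers cancel, leaving at each Tate twist $t^0,\dots,t^{p-2}$ ($\deg t=(2b_n,b_n)$, with $t^{p-1}=0$ since we stop at $M_a^{p-1}$) exactly $\tilde Q(n-1)\{a'\}$ --- that is, $Q(n-1)\{a'\}$ with the single top class $Q_0\cdots Q_{n-1}(a')$ deleted --- while the point summands contribute one copy of $H^{*,*'}$ at $t^0$ and one copy of $\tilde H^{*,*'}$ at each of $t^1,\dots,t^{p-1}$. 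Collating these over $i$ gives precisely
\[ H^{*,*'}(R_n;\bZ/p)\cong H^{*,*'}\oplus\bigl(\tilde Q(n-1)\{a'\}\oplus\tilde H^{*,*'}\{t\}\bigr)\{1,t,\dots,t^{p-2}\}. \]

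The part that needs genuine work is the identification of the boundary maps $r_i^*$ and the computation of their kernels and cokernels. Already for $i=1$ this requires the multiplicative structure of $H^{*,*'}(\tilde\chi_a)$; for $i\ge2$, where $M_a^{i-1}$ is no longer $M(\chi_a)$, one must follow Voevodsky's analysis in [Vo4] of the triangles (3.3)--(3.4) to see $r_i^*$ as the correct higher operation assembled from $\delta_a$, and then track the bookkeeping through all $p-1$ stages so that the closed form becomes visible. Everything else --- the degeneration of the $\chi_a$-cofibre sequence, the splitting of the extensions, and the final collation --- is formal once the $r_i^*$ are in hand.
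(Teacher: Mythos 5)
Your overall strategy matches the paper's: compute $H^{*,*'}(M_a^i;\bZ/p)$ inductively via the triangles (3.3)--(3.4), splitting all extensions because the groups are $\bZ/p$-vector spaces, starting from $H^{*,*'}(\chi_a;\bZ/p)$, and collating. The setup (identifying $K^M_*(k)/(\Ker a)\cong\bZ/p$, the vanishing $Q_i(\tau)=0$, the $Q(n)\otimes\bZ/p[\xi_a]$-module structure of $H^{*,*'}(\tilde\chi_a)$) is also correct and consistent with what the paper does.

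However, the step you explicitly defer --- identifying the kernels and cokernels of the boundary maps $r_i^*$ for $i\ge 2$ --- is the actual content of the proof, and your proposed method for filling that gap does not work. You suggest following Voevodsky's construction to see $r_i^*$ as ``the correct higher operation assembled from $\delta_a$.'' That is precisely where the paper warns you off: as the remark after Lemma 3.7 notes, the formula $r_2^*(1)=\delta t$ would be an immediate consequence of $\delta$-multiplication \emph{if the class $t$ actually existed} in $H^{*,*'}(M;\bZ/p)$, but $t$ is a ``vertical'' element that does not exist. So one cannot literally read $r_i^*$ off as cup-product by $\delta$ for $i\ge 2$, and the higher $r_i^*$ are not computed directly from the comultiplication. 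The paper instead proves Lemmas 3.7 and 3.10 by a geometric/weight argument: since $R_n=M_a^{p-1}$ is a direct summand of the motive of the smooth variety $V_a$, any class $x\in H^{*,*'}(M^{p-1};\bZ/p)$ must satisfy $w(x)=2s.deg(x)-f.deg(x)\ge 0$ and $d(x)=f.deg(x)-s.deg(x)\le\dim V_a=p^n-1$; classes like $\delta t^j$ (with $w=-1$) or $Q_n(a')$ (with $d=p^n>\dim V_a$) therefore cannot survive to $M^{p-1}$, which forces the boundary maps to hit them. This indirect ``must vanish downstream, hence must be a boundary'' argument is what pins down $r_2^*$ and $r_{p-1}^*$, and it is entirely absent from your outline. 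Until you incorporate that mechanism (or supply another correct one), the inductive step remains unproved; merely asserting that the $\bZ/p[\xi_a]$-towers cancel at each stage is not justified.
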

Recall that $H_{et}^*(X;\bZ/p)\cong H^{*,*}(X;\bZ/p)$.
Since $\tau t\in \tilde H^{*,*'}\{t\}$ and $\tau Q_i(a')=Q_i(a)=0$, we can see 
\begin{cor}   We have $ H_{et}^*(R_n;\bZ/p)\cong K_*^M(k)[t]/(p,t^p).$
\end{cor}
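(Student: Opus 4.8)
By the Beilinson--Lichtenbaum isomorphism $H^m_{et}(R_n;\bZ/p)\cong H^{m,m}(R_n;\bZ/p)$ recalled above, the plan is to read off the \'etale cohomology directly from the additive description of $H^{*,*'}(R_n;\bZ/p)$ in Theorem 3.7, by determining which of its summands meet the diagonal (cohomological degree $=$ weight). The two inputs I would use are exactly the facts quoted just before the statement: $\tau t$ is a genuine class lying in $\tilde H^{*,*'}\{t\}$, and $\tau Q_i(a')=Q_i(\tau a')=Q_i(a)=0$ — here $Q_i(a)$ vanishes because it lands in $H^{*,*'}(Spec(k);\bZ/p)$ in a bidegree where that group is zero: for $i\ge1$ the weight $n+1+p^i$ is strictly below the cohomological degree $n+2p^i$, and for $i=0$ it sits in $K^M_{n+2}(k)/p=0$.

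Next I would go through the three types of summand in Theorem 3.7. In $H^{*,*'}\cong K_*^M(k)/p[\tau]$ a monomial $\kappa\tau^j$ with $\kappa\in K_\ell^M(k)/p$ has bidegree $(\ell,\ell+j)$, hence is diagonal exactly when $j=0$; this contributes $K_*^M(k)/p$. In a summand $\tilde Q(n-1)\{a'\}t^i$ with $0\le i\le p-2$ a $\bZ/p$-basis element $Q_I(a')\,t^i$ (with $I\subsetneq\{0,\dots,n-1\}$ and $Q_I$ the product of the corresponding Milnor operations) has cohomological degree minus weight equal to $\bigl(\sum_{\ell\in I}p^{\ell}-|I|\bigr)+1+i\,b_n\ge 1$, so no such class is diagonal and these summands contribute nothing; this is consistent with $\tau Q_I(a')=0$ for $I\ne\varnothing$ and $\tau a'=a\in H^{*,*'}$, so that no $\tau$-multiples of them must be tracked separately. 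Finally, in a summand $\tilde H^{*,*'}\{t\}t^{i}$ with $0\le i\le p-2$ — i.e. the $\tilde H^{*,*'}$-multiples of $t^{j}$ for $j=i+1$, $1\le j\le p-1$ — an element $\kappa\tau^{b}t^{j}$ with $\kappa\in K_\ell^M(k)/p$ has bidegree $(\ell+2jb_n,\ \ell+b+jb_n)$, hence is diagonal exactly when $b=jb_n$; since $jb_n\ge 1$ such elements do lie in the summand, and its diagonal part is freely generated over $K_*^M(k)/p$ by the class $u_j:=\tau^{jb_n}t^{j}$ of bidegree $(2jb_n,2jb_n)$.

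Assembling, I get an additive isomorphism $H^*_{et}(R_n;\bZ/p)\cong K_*^M(k)/p\ \oplus\ \bigoplus_{j=1}^{p-1}K_*^M(k)/p\cdot u_j$. To upgrade this to the stated ring isomorphism I would set $u:=u_1=\tau^{b_n}t$ and check that $u_j=u^{j}$ for $1\le j\le p-1$ and that $u^{p}=0$. For the first I would use the multiplicativity of the $t$-tower, cleanest via restriction to $\bar R_n$, where $CH^*(\bar R_n)\cong\bZ_{(p)}[y]/(y^p)$, the powers $y^{j}$ are honest products, and $u_j=\tau^{jb_n}t^{j}$ restricts to $\tau^{jb_n}y^{j}$, together with centrality of $\tau$. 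For the second, $u^{p}=\tau^{pb_n}t^{p}$ lies in bidegree $(2pb_n,pb_n)$, and Theorem 3.7 shows $H^{2pb_n,pb_n}(R_n;\bZ/p)=0$ ($t^{p}$ occurs in none of the summands: the $\tilde H^{*,*'}\{t^{j}\}$ stop at $j=p-1$, the $\tilde Q(n-1)\{a'\}t^i$ stop at $i=p-2$, and $K^M_{2pb_n}(k)/p=0$). This gives $H^*_{et}(R_n;\bZ/p)\cong K_*^M(k)/p[u]/(u^{p})$, which is $K_*^M(k)[t]/(p,t^{p})$.

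The bidegree bookkeeping of the middle paragraph is routine once Theorem 3.7 and the quoted vanishing facts are in hand. The main obstacle is the last step: because the Remark after Corollary 3.6 warns that $t^i$ is only a ``vertical'' symbol and not itself a class, I cannot simply declare $u_j=u^{j}$ formally but must genuinely argue the multiplicativity of the $t$-tower (and the relation $u^{p}=0$), for which restriction to $\bar R_n$ seems the most economical route.
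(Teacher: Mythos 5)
Your argument is correct and is essentially the paper's own much terser proof: the paper simply points at $\tau t \in \tilde H^{*,*'}\{t\}$ and $\tau Q_i(a') = Q_i(a) = 0$, reads off the diagonal of Theorem 3.3, and also records the one-line alternative via $H^*_{et}(\chi_a;\bZ/p)\cong H^*_{et}(pt.;\bZ/p)$ applied to the triangles (3.3), (3.4). Two harmless slips in your bookkeeping that do not affect the conclusions: the degree-minus-weight of $Q_I(a')\,t^{\,i}$ is $1+\sum_{\ell\in I}p^{\ell}+ib_n$ (the $-|I|$ should not be there, since $Q_\ell$ raises $d=f.deg-s.deg$ by $p^\ell$, not $p^\ell-1$), and $u^{p}=(\tau^{b_n}t)^{p}$ lies in bidegree $(2pb_n,2pb_n)$ rather than $(2pb_n,pb_n)$, where the vanishing you need is $H^{2pb_n,2pb_n}(R_n;\bZ/p)=0$, which indeed follows from the same inspection of Theorem 3.3 together with $2b_n>n+1$.
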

This fact itself is easily seen from $H^{*}_{et}(\chi_a;\bZ/p)
\cong H^*_{et}(pt.;\bZ/p)$.
Let us write by $\tilde H_{et}^*(R_n;\bZ/p)$ the same ring
$H_{et}^*(R_n;\bZ/p)\cong K_*^M(k)[t]/(t^p)$  with $deg(t)=(2b_n,b_n)$
(but not $(2b_n,2b_n))$.

\begin{cor}  We have the exact sequence
\[ 0\to \tilde Q(n-1)\{a'\}\otimes \bZ/p[t]/(t^{p-1})
\to H^{*,*'}(R_n;\bZ/p)\qquad \qquad \]
\[\qquad \qquad \to \tilde H^*_{et}(R_n;\bZ/p)\otimes \bZ/p[\tau]\cong H^{*,*'}[t]/(t^p)
\to \bZ/p[t]^{+}/(t^p)\to 0.\]
\end{cor}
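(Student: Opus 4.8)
The plan is to derive the exact sequence as a purely formal consequence of the additive description of $H^{*,*'}(R_n;\bZ/p)$ in Theorem 3.3, combined with the computation of the \'etale cohomology in Corollary 3.5. By Theorem 3.3 we may write
\[ B:=H^{*,*'}(R_n;\bZ/p)\ \cong\ H^{*,*'}\ \oplus\ A\ \oplus\ \big(\tilde H^{*,*'}\{t\}\otimes\bZ/p[t]/(t^{p-1})\big), \]
where $A:=\tilde Q(n-1)\{a'\}\otimes\bZ/p[t]/(t^{p-1})$ is a direct summand; this makes the first arrow $A\hookrightarrow B$ of the asserted sequence the summand inclusion, which is injective. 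For the third term, Corollary 3.5 gives $H_{et}^*(R_n;\bZ/p)\cong K_*^M(k)/p[t]/(t^p)$; recording this ring with the weight convention $|t|=(2b_n,b_n)$ gives the object written $\tilde H_{et}^*(R_n;\bZ/p)$, and tensoring with $\bZ/p[\tau]$ gives
\[ C:=\tilde H_{et}^*(R_n;\bZ/p)\otimes\bZ/p[\tau]\ \cong\ H^{*,*'}[t]/(t^p)\ =\ \bigoplus_{i=0}^{p-1}H^{*,*'}t^i . \]

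Next I would write down the remaining two maps and verify exactness summand by summand. The map $B\to C$ is the composite of the projection $B\twoheadrightarrow H^{*,*'}\oplus\big(\tilde H^{*,*'}\{t\}\otimes\bZ/p[t]/(t^{p-1})\big)$ with kernel $A$, followed by the map which is the identity on the $t^0$-summand $H^{*,*'}$ and the inclusion $\tilde H^{*,*'}\hookrightarrow H^{*,*'}$ on each $t^i$-summand for $1\le i\le p-1$; thus it kills every $Q_I(a')t^j$ and is the evident inclusion on the other generators, and up to the relabeling of Corollary 3.5 it reflects the comparison map $H^{*,*'}(R_n;\bZ/p)\to H_{et}^*(R_n;\bZ/p)$. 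By construction $\Ker(B\to C)=A$, which gives exactness at $A$ and at $B$. The image of $B\to C$ is $H^{*,*'}t^0\oplus\bigoplus_{i=1}^{p-1}\tilde H^{*,*'}t^i$, whose cokernel inside $C$ is $\bigoplus_{i=1}^{p-1}(H^{*,*'}/\tilde H^{*,*'})t^i\cong\bigoplus_{i=1}^{p-1}\bZ/p\,t^i=\bZ/p[t]^+/(t^p)$, the classes of $C$ not hit being precisely the constant ones $1\cdot t^i$. Taking the last arrow $C\to\bZ/p[t]^+/(t^p)$ to be this cokernel projection yields the desired four-term exact sequence.

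Since the statement is bookkeeping inside the decomposition of Theorem 3.3, I do not expect a genuine obstacle; the point requiring care is the correct identification of the third term. After Corollary 3.5 one must re-record the weight of $t$ as $b_n$ and tensor back up by $\bZ/p[\tau]$ in order to land in $H^{*,*'}[t]/(t^p)$; and one must notice that it is the augmentation-type summand $\tilde H^{*,*'}\{t\}$, rather than $H^{*,*'}\{t\}$, that appears in positive $t$-degrees in Theorem 3.3, which is exactly what forces the nonzero cokernel $\bZ/p[t]^+/(t^p)$.
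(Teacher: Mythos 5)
Your proposal is correct, and since the paper states this corollary without proof, it is presumably the intended derivation: the sequence is pure bookkeeping on the additive decomposition of Theorem 3.3. Your identification of kernel and cokernel both check out: the kernel of the constructed map $B\to C=H^{*,*'}[t]/(t^p)$ is precisely the summand $\tilde Q(n-1)\{a'\}\otimes\bZ/p[t]/(t^{p-1})$, and the cokernel is $\bigoplus_{i=1}^{p-1}(H^{*,*'}/\tilde H^{*,*'})\{t^i\}\cong\bZ/p[t]^+/(t^p)$.

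Two small caveats. First, a bookkeeping slip: the \'etale computation you invoke is the corollary directly preceding the one under discussion (the one stating $H^*_{et}(R_n;\bZ/p)\cong K_*^M(k)[t]/(p,t^p)$), not the corollary you refer to by number. Second, the remark that your middle map \emph{is} the comparison map to \'etale cohomology is not quite literal: under the $\tau$-inverted realization $a'$ maps to $\tau^{-1}a\neq 0$ (recall $\tau a'=a$), so the honest comparison does not kill $a'$; what you have built is rather the composite with the truncation onto nonnegative $\tau$-powers, and this is what kills the whole $\tilde Q(n-1)\{a'\}$-summand (the pieces $Q_I(a')$ with $I\neq\emptyset$ are genuine $\tau$-torsion, while $a'$ itself lands in negative $\tau$-degree). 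This does not affect the validity of your exact sequence, since the corollary is additive and your map is perfectly well defined by the decomposition; it is only the interpretive gloss that needs adjustment.
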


For ease of notations, for $x\in H^{*,*'}(Y;\bZ/p)$,
let us write by  $f.deg(x)=*,$ and  $s.deg(x)=*'$,
and define 
\[ w(x)=2s.deg(x)-f.deg(x),\qquad d(x)=f.deg(x)-s.deg(x).\]
Then it is well known ([Vo1,2]) that if $Y$ is smooth and 
$0\not =x\in H^{*,*'}(Y;\bZ/p)$, then
$0\le w(x)$ and $0\le d(x)\le dim(Y).$

{\bf Remark.}
   Let us consider the following triangular domain 
in $\bZ\times \bZ$ generated by bidegree $(*,*')$
   \[D_i'=\{deg(x)|0\le w(x),\ 2b_n(i-1)\le  *<2b_ni,\ 
    b_n(i-1)<   d(x)\}\]
   and $D'=\cup_{j=1}^{p-1}D_j'$.
Then we know in  Lemma 8.7 in [Ya4]  the following 
fact.  Let us write $K=K_*^M(k)$,  $ _aK=Ker(a))$ in $K$ and $K_a=K/K_a$.  Then for bidegree $(*,*')\in D'$ 
   defined above,
   we have the $K$-module isomorphism,  
    \[H^{*,*'}(R_n;\bZ/p) \cong K\{1\}\oplus 
 (K_a\otimes Q(n-1)\{a'\}\oplus\ _aK\{t\})\otimes 
    K[t]/(t^{p-1}).\]
So the above isomorphism shows the preceding theorem
when $(*,*')\in D'$, since $K\cong H^{*,*'}$ in $D'$.

To prove the above theorem, we need some lemmas and arguments.  Let us assume that $p$ is  an odd prime ($p=2$ case is proved similarly).  Let us write
$\chi_a$ (resp. $M_a^i, \delta_a,\xi_a$) by simply
$\chi$ (resp. $M^i,\delta,\xi$).
Define 
\[D_i(Y)=\{H^{*,*'}(Y;\bZ/p)|2b_ni+n< *
\le 2b_n(i+1)+n\} \]
so that $H^{*,*'}(Y;\bZ/p)=\sum _{i=-1}^{\infty}D_i(Y)$
and $D_i\cap D_j=0$ for $i\not =j$.
Note $|\xi|=2b_{n+1}=2pb_n$. 
To see arguments
easily, here  we use notations that
$\begin{CD} @>{D_i}>a_i> or  @>{a_i}>{D_i}> \end{CD}$
simply means $a_i\in D_i$, e.g.,
\[\begin{CD}
   H^{*,*'}(\chi;\bZ/p): @>{D_{-1}}>{1}>   @>{D_{0}}>{a',\delta}> @>{D_{1},...}>{0,...}> @>{D_{p-2}}>{0}> @>{D_{p-1}}>{Q_n(a'),\xi}> @>{D_{p}}>{a'\xi,\delta\xi}>
\end{CD}  \]
 From Theorem 3.1 and
$K_*^M(k)/Ker(a))\cong \bZ/p$, we have
$ D_i(\chi)\cong D_{i+pm}(\chi)$  for $\ i\ge 1,$
and $D_i(\chi)\not=0$ only 
if $i=-1,0\ mod(p)$.

On the other hand, when $Y=M^{p-1}$, we see
$D_i=0$ for $i\ge p-1$ 
\[\begin{CD}
   H^{*,*'}(M^{p-1};\bZ/p): 
@>{D_{-1}}>{H^{*,*'}}>  
 @>{D_{0}}>{a',\tilde H^{*,*'}t}>
 @>{D_{1},...}>{a't,\tilde H^{*,*'}t^2,...}> 
@>{D_{p-2}}>{a't^{p-2}, \tilde H^{*,*'}t^{p-1}}> 
@>{D_{p-1}, D_{p},...}>{0,0,...}>
\end{CD}  \]

We first compute $H^*(M;\bZ/p)$.
 Consider the exact sequence induced from (3.3)
   \[... \stackrel{\delta}{\gets} H^{*,*'}
   (\chi(b_n)[2b_n];\bZ/p)
   \stackrel{j_1}{\gets}H^{*,*'}(M;\bZ/p)
   \stackrel{j_2}{\gets}  H^{*,*'}(\chi;\bZ/p)
\stackrel{\delta}{\gets}...\]
Hence we have the isomorphism
\[ H^{*,*'}(M;\bZ/p)\cong Coker(\delta)\oplus
Ker(\delta)[b_n](2b_n).\]

The map $\delta: \chi\to \chi(b_n)[2b_n+1]$ in $\bD\bM$
is given by
the cup product $x\mapsto x\cup \delta$ for an element
$\delta\in H^{2b_n*1,b_n}(\chi;\bZ_{(p)})$.  In fact, we note 
\[ Hom_{\bD\bM}(M(\chi),\ M(\chi)(*')[*])
\cong Hom_{\bD\bM}(M(\chi)\otimes M(\chi),\ \bZ_{(p)}(*')[*])\]
\[ \cong Hom_{\bD\bM}(M(\chi),\ \bZ_{(p)}[*')[*])  \cong
H^{*,*'}(\chi;\bZ_{(p)}).\]

Note that from $Q_i^2=0$, the map multiplying $ \delta=Q_{n-1}...Q_0(a')$ is a $Q(n-1)$-module map,
while is not a $Q_n$-map.

\begin{lemma}In $H^{*,*'}(\chi;\bZ/p)$, we have
$\delta\cdot Q_n(a')=\xi\cdot a'$.
\end{lemma}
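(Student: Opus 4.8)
The plan is to work entirely inside the Milnor-operation module structure of $H^{*,*'}(\chi;\bZ/p)$ given by Theorem 3.1 (with $K_*^M(k)/(Ker(a))\cong \bZ/p$), where
\[ H^{*,*'}(\tilde\chi;\bZ/p)\cong Q(n)\otimes \bZ/p[\xi]\{a'\}, \qquad \xi=Q_nQ_{n-1}\cdots Q_0(a'), \]
and $H^{*,*'}(\chi;\bZ/p)$ is obtained by adjoining the unit $1$ (together with the coefficients $H^{*,*'}(pt.;\bZ/p)$, but those play no role here since $\tau Q_i(a')=0$). First I would identify exactly which graded piece both sides live in: $\delta=Q_{n-1}\cdots Q_0(a')$ has $f.\deg = (n+1)+\sum_{j=0}^{n-1}(2p^j-1) = 2b_n+1$ minus the correction — more precisely $\deg(\delta)=(2b_n+1,b_n)$ as recorded after (3.2) — and $Q_n(a')$ has $\deg = (n+1+2p^n-1, n+p^n)=(2p^n+n, n+p^n)$. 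Multiplying, $\delta\cdot Q_n(a')$ has $f.\deg = 2b_n+1+2p^n+n = 2(b_n+p^n)+n+1 = 2b_{n+1}+n+1$ and $s.\deg = b_n+n+p^n = b_{n+1}+n$; likewise $\xi\cdot a'$ has $f.\deg = 2pb_n + n+1 = 2b_{n+1}+n+1$ (using $\xi = Q_nQ_{n-1}\cdots Q_0(a')$ has $|\xi| = 2b_{n+1}$, i.e. bidegree $(2b_{n+1},b_{n+1})$ up to the $a'$ shift — I would compute this carefully) and matching $s.\deg$. So both sides are elements of the same one-dimensional (over $\bZ/p$, modulo coefficients) graded piece $D_{p-1}(\chi)$, which from the explicit list in the excerpt is spanned by $\{Q_n(a'),\xi\}$ times $a'$; hence the two sides agree up to a scalar in $\bZ/p$, and the content of the lemma is that that scalar is $\pm 1$ (nonzero).

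The key computation is then a Cartan-type formula. I would expand $\xi\cdot a' = Q_n(Q_{n-1}\cdots Q_0(a'))\cdot a' = Q_n(\delta)\cdot a'$ using the derivation property of $Q_n$ (it is a derivation, being a primitive cohomology operation, even mod $p$ for all the $Q_i$):
\[ Q_n(\delta\cdot a') = Q_n(\delta)\cdot a' + (-1)^{|\delta|}\delta\cdot Q_n(a'). \]
Now $\delta\cdot a' = Q_{n-1}\cdots Q_0(a')\cdot a'$. I claim this product vanishes in $H^{*,*'}(\tilde\chi;\bZ/p)$: by Theorem 3.1 the module $H^{*,*'}(\tilde\chi;\bZ/p)$ is free over $Q(n)\otimes \bZ/p[\xi]$ on the single generator $a'$, and $\delta\cdot a'$ would have to be a $\bZ/p[\xi]$-multiple of some $Q_S(a')$ with $|S|$ monomials — but checking degrees, $\delta\cdot a'$ sits in $D_0$-ish range well below $\xi\{a'\}$, and the only candidate monomial $Q_{n-1}\cdots Q_0(a')\cdot a'$ is not itself one of the module generators $Q_S(a')$; more robustly, one notes $(a')^2=0$ already (as $|a'|=(n+1,n)$ is odd in the first degree when... — actually $n+1$ need not be odd, so instead I use: $a'$ squares to zero because $H^{*,*'}(\tilde\chi)$ is generated over $Q(n)\otimes\bZ/p[\xi]$ by $a'$ and $(a')^2$, having the wrong degree to be $\xi\cdot a'$ or any $Q_S(a')$, must be $0$), and then $\delta\cdot a' = Q_{n-1}\cdots Q_0(a')\cdot a'$ is handled the same way. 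Granting $\delta\cdot a'=0$, the Cartan formula collapses to
\[ 0 = Q_n(0) = Q_n(\delta)\cdot a' + (-1)^{|\delta|}\delta\cdot Q_n(a') = \xi\cdot a' \mp \delta\cdot Q_n(a'), \]
and since $|\delta|=2b_n+1$ is odd, the sign is $+$, giving $\delta\cdot Q_n(a') = \xi\cdot a'$ exactly, with no ambiguous scalar.

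The main obstacle I anticipate is justifying the vanishing $\delta\cdot a'=0$ (and $(a')^2=0$) rigorously rather than by a degree hand-wave: one must rule out that these products land in a nonzero piece of $H^{*,*'}(\chi;\bZ/p)$ coming either from the $\tilde\chi$-part described by Theorem 3.1 or from the unit/coefficient part $H^{*,*'}(pt.)\{1\}$. For the coefficient part this is clear because $\delta\cdot a'$ and $(a')^2$ lie in $\tilde H^{*,*'}(\chi)$ (positive motivic degree beyond the point), and for the $\tilde\chi$-part I would invoke the $Q(n)\otimes\bZ/p[\xi]$-freeness on $\{a'\}$ together with a bidegree count: list the bidegrees of $\xi^m Q_S(a')$ for $S\subseteq\{0,\dots,n\}$ and check that neither $2\deg(a')$ nor $\deg(\delta)+\deg(a')$ appears. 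This is the one spot where I'd actually write out the arithmetic of the $b_n,p^j$ exponents carefully; everything else is formal manipulation with the derivation $Q_n$ and Theorem 3.1.
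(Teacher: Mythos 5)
Your approach is a close relative of the paper's, reorganized around a different pivot, and it mostly works — but the key auxiliary claim $\delta\cdot a'=0$ is not actually provable by the pure degree count you sketch when $p=2$, and that deserves a flag.

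Both you and the paper exploit the same two facts: $Q_n$ is a derivation and $Q_n(\delta)=\xi$. The paper applies $Q_n$ to $\delta\cdot Q_n(a')-\xi\cdot a'$ itself: since $Q_n(\delta\cdot Q_n(a'))=Q_n(\delta)Q_n(a')=\xi Q_n(a')=Q_n(\xi a')$, the difference lies in $\ker Q_n=\operatorname{im}Q_n$ (by $Q(n)$-freeness of $H^{*,*'}(\tilde\chi;\bZ/p)$), so it equals $Q_n(b)$ with $b$ necessarily a combination of $Q_J(a')$ for $J\subseteq\{0,\dots,n-1\}$ (the $n\in J$ terms die under $Q_n$); degree counting over that restricted index set then forces $b=0$, and this works uniformly in $p$. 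You instead apply $Q_n$ to $\delta\cdot a'$ and claim $\delta a'=0$ outright by comparing degrees against all $\xi^m Q_S(a')$, $S\subseteq\{0,\dots,n\}$. That count is clean for odd $p$, but for $p=2$ one has $b_n+1=2^n=p^n$, which makes $|\delta a'|=|Q_n(a')|$ in both gradings — so your degree argument only yields $\delta a'\in\bZ/p\{Q_n(a')\}$, not $\delta a'=0$. Fortunately this does not sink your proof: all the Cartan step actually needs is $Q_n(\delta a')=0$, and $Q_n$ kills $Q_n(a')$, so the conclusion $\delta Q_n(a')=\xi a'$ still follows. I would therefore rephrase the middle step as ``$Q_n(\delta a')=0$, since $\delta a'$ lies (by degree) in $\bZ/p\{Q_n(a')\}$'' rather than asserting $\delta a'=0$. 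Two smaller remarks: the detour through $(a')^2=0$ is not needed and is itself delicate at $p=2$; and $\delta Q_n(a')$ and $\xi a'$ live in $D_p(\chi)$, not $D_{p-1}(\chi)$ — first degree is $2b_{n+1}+n+1$, which just exceeds the $D_{p-1}$ window $2b_n(p-1)+n< *\le 2b_np+n$.
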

\begin{proof}  We have
\[ Q_n(\delta)\cdot Q_n(a')=\xi\cdot Q_n(a')=
Q_n(\xi\cdot a').\]
So $\delta\cdot Q_n(a')=\xi\cdot a'+Q_n(b)$ for some $b$ since $H^{*,*'}(\tilde\chi;\bZ/p)$ is $Q(n)$-free.
But  $ b=\sum \lambda_JQ_J(a')$ for some $\lambda_J\in\bZ/p$
and $Q_J\in Q(n-1)$. Since $|b|=|\delta a'|$,
we see $\lambda=0\in\bZ/p$.
\end{proof}
The map $\times \delta : H^{*,*'}(\chi;\bZ/p)\to
H^{*,*'}(\chi;\bZ/p)$ is written as the following 
\[ \begin{CD}
   @. @>{D_{-1}}>{\ \ 1}>  
 @>{D_{0}}>{a',\delta}>
 @>{D_{1},...,D_{p-3}}>{0,...,0}>
 @>{D_{p-2}}>{0}> 
@>{D_{p-1}}>{Q_n(a'),\ \xi}> 
@>{D_{p}}>{a'\xi,\ \delta\xi}> \\
 @. @V{\delta}VV  @V{\delta}VV   @V{\delta}VV  @V{\delta}VV @V{\delta}VV  @V{\delta}VV \\
  @>{1}>{D_{-1}}> 
  @>{a',\delta}>{D_0}>
 @>{0,0}>{D_{1}}>
@>{0,...,0}>{D_2,...,D_{p-2}}> 
@>{Q_n(a'),\ \xi}>{D_{p-1}}>
 @>{a'\xi,\ \delta\xi}>{D_{p}}>
@>{0,\ 0}>D_{p+1}>,
\end{CD}  \]
which is not a commutative diagram, and it means only
$\delta\cdot 1=\delta$,  $\delta\cdot a'=0$, $\delta \cdot \delta=0,$ .., $\delta\cdot Q_n(a')=a'\xi$,...

Recall that $Q(n-1)\{a'\}/\delta\cong \tilde Q(n-1)\{a'\}$.  We also recall
 $\delta\cdot Q(n-1)Q_n\{a'\}\cong Q(n-1)\{a'\xi\}$
from the above lemma. 
 Using these, we can compute
\[ Coker(\delta)\cong H^{*,*'}\oplus 
\tilde Q(n-1)(a')
 \oplus Q(n-1)Q_n(a')\otimes \bZ/p[\xi],\]
\[ Ker (\delta)[b_n](2b_n)\cong Ker(\delta)\{t\}\cong \tilde H^{*,*'}\{t\}
\oplus (Q(n-1)(a't)\otimes\bZ/p[\xi]\]
where $t$ is a vertical element with $deg(t)=(2b_n,b_n)$.
  We can write $H^{*,*'}(M;\bZ/p)$ as 
\[ \begin{CD}
  @>{H^{*,*'}}>{D_{-1}}> 
  @>{a', \tilde H^{*,*'}t}>{D_0}>
 @>{a't,\delta t}>{D_{1}}>
@>{0,...,0}>{D_2,...,D_{p-2}}> 
@>{Q_n(a'),\xi}>{D_{p-1}}>
 @>{0,0}>{D_{p}}>
@>{a'\xi t, \delta \xi t}>D_{p+1}>
\end{CD}  \]
Here note $Q_{n-1}...Q_0(Q_n(a'))=\xi$.
\begin{lemma}
We have the isomorphism 
\[ H^{*,*'}(M;\bZ/p)\cong 
H^{*,*'}\oplus \tilde Q(n-1)(a')\oplus \tilde H^{*,*'}\{t\}\]
\[ \oplus 
 Q(n-1)\{a't,Q_n(a')\}\otimes \bZ/p[\xi].\]
 \end{lemma}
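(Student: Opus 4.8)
The plan is to obtain Lemma~3.9 as a consolidation of the two computations carried out immediately above. First I would recall that the long exact sequence attached to the distinguished triangle (3.3) with $i=1$ — equivalently, the sequence displayed above whose connecting map is cup product with $\delta=Q_{n-1}\cdots Q_0(a')\in H^{2b_n+1,b_n}(\chi;\bZ_{(p)})$ — gives
\[ H^{*,*'}(M;\bZ/p)\cong Coker(\delta)\oplus Ker(\delta)(b_n)[2b_n].\]
Into this I substitute the two formulas already established, namely
\[ Coker(\delta)\cong H^{*,*'}\oplus\tilde Q(n-1)(a')\oplus Q(n-1)Q_n(a')\otimes\bZ/p[\xi], \]
\[ Ker(\delta)(b_n)[2b_n]\cong\tilde H^{*,*'}\{t\}\oplus Q(n-1)(a't)\otimes\bZ/p[\xi], \]
with $t$ the vertical class of bidegree $(2b_n,b_n)$. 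Merging the two $\bZ/p[\xi]$-summands by
\[ Q(n-1)Q_n(a')\otimes\bZ/p[\xi]\ \oplus\ Q(n-1)(a't)\otimes\bZ/p[\xi]\ \cong\ Q(n-1)\{a't,Q_n(a')\}\otimes\bZ/p[\xi] \]
then yields precisely the asserted isomorphism.

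The point that needs genuine care is that the right-hand side really is an internal direct sum of all the listed summands — that there is no further coincidence between the $Coker(\delta)$ and $Ker(\delta)(b_n)[2b_n]$ contributions, and no collapse inside $Q(n-1)\{a't,Q_n(a')\}\otimes\bZ/p[\xi]$. I would handle this with the $D_i$-bookkeeping set up before the statement: one checks that the module generators $1$, $a'$, $\tilde H^{*,*'}t$, $a't$, $\delta t$, $Q_n(a')$, $\xi,\dots$ fall into the disjoint pieces $D_{-1},D_0,D_1,\dots$ exactly as in the diagram displayed just above the statement (using $|\xi|=2pb_n$, so multiplication by $\xi$ shifts the $D_i$-index by $p$, while $D_i(M)=0$ for $i\ge p-1$). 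Since $H^{*,*'}$, $\tilde H^{*,*'}$, $Q(n-1)$ and $\bZ/p[\xi]$ are all free over $\bZ/p$ and these graded pieces sit in separated degree ranges, the short exact sequence $0\to Coker(\delta)\to H^{*,*'}(M;\bZ/p)\to Ker(\delta)(b_n)[2b_n]\to 0$ splits and the summands in the final formula are independent.

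I expect the real obstacle to lie not in this lemma but one step upstream, in the computation of $\times\delta$ that feeds the two displayed formulas: one must pin down multiplication by $\delta$ on all of $H^{*,*'}(\chi;\bZ/p)$, which rests on the $Q(n-1)$-linearity of $\times\delta$ together with Lemma~3.8 ($\delta\cdot Q_n(a')=\xi a'$) and the vanishing $\delta\cdot a'=0$ (forced by bidegree, since $Q(n)\{a'\}\otimes\bZ/p[\xi]$ has nothing in $\deg(\delta a')$). Granting that $\times\delta$ is zero on $H^{*,*'}\oplus Q(n-1)\{a'\}\otimes\bZ/p[\xi]$ and injective on $Q(n-1)Q_n\{a'\}\otimes\bZ/p[\xi]$ with image $Q(n-1)\{a'\}\otimes\xi\bZ/p[\xi]$ — which is what makes the $Coker(\delta)$ and $Ker(\delta)$ formulas correct — the present lemma reduces to the bookkeeping described above.
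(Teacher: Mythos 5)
Your proposal is correct and follows essentially the same route as the paper: both split $H^{*,*'}(M;\bZ/p)$ as $Coker(\delta)\oplus Ker(\delta)(b_n)[2b_n]$ from the long exact sequence of (3.3), substitute the already-computed formulas for $Coker(\delta)$ and $Ker(\delta)\{t\}$ (which rest on $Q(n-1)$-linearity of $\times\delta$ and Lemma~3.5 giving $\delta\cdot Q_n(a')=\xi a'$), and merge the two $\bZ/p[\xi]$-free pieces into $Q(n-1)\{a't,Q_n(a')\}\otimes\bZ/p[\xi]$. Your added care about the $D_i$-bookkeeping to certify the summands are independent is consistent with the diagram the paper displays just above the lemma.
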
 

Next, we compute $H^{*,*}(M^i;\bZ/p)$ for $1<i<p-1$.
We consider  the long exact sequence
induced from (3.4)
\[...\stackrel{r_i^*}{\gets} H^{*-2b_ni,*'-b_ni}(\chi;\bZ/p)\gets
    H^{*,*'}(M^i;\bZ/p)\gets H^{*,*'}(M^{i-1};\bZ/p)
\stackrel{r_i^*}{\gets}...\]
Recall that $D_j(\chi)=0$ for $j\not =-1,0\ mod(p)$.
Hence 
\[ H^{*,*'}(M^i;\bZ/p)\cong H^{*,*'}(M^{i-1};\bZ/p)
\quad for \ i\not =-1,0\ mod(p).\]
The element  $r_i^*$ is represented by an element in $H^{2b_ni+1,b_ni}(M^{i-1};\bZ/p)$ from
\[ Hom_{\bD\bM}(M^{i-1},M(\chi)(*')[*])\cong Hom_{\bD\bM}(M^{i-1}\otimes M(\chi), \bZ/p(*')[*])\]
\[ \cong Hom_{\bD\bM}(M^{i-1},\bZ/p(*')[*])\cong  H^{*,*'}(M^{i-1};\bZ/p).\]

Hence $H^{*,*}(M^i;\bZ/p)\cong Coker(r_i^*)\oplus
Ker(r_i^*)\{t^i\}$. We first study in the case $i=2<p-1$.
Note that the map $\times \delta t=\times Q_{n-1}...Q_0(a't)$ is a $Q(n-1)$-module map.
\begin{lemma} we have $r_2^*(1)=\delta t$
and $r_2^*(Q_n(a'))=a't\xi$. (I.e. $\delta tQ_n(a')=a't\xi$.)
\end{lemma}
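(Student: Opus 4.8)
The plan is to compute $r_2^*$ on the two generators of $D_{p-1}(M^1)$, namely $1$ (the generator of $H^{*,*'}$ in $D_{-1}$, whose image under $r_2^*$ lands in $D_{p-1}$ after the degree shift by $2b_n$... wait, let me recheck the degrees) — actually $r_2^*$ raises filtration so that it sends $D_{j-1}(M^1)$-type classes to the appropriate piece of $H^{*-2b_n\cdot 2,*'-b_n\cdot 2}(\chi)$. Let me restate: I want to identify the class $r_2^* \in H^{2b_n\cdot 2+1,\,b_n\cdot 2}(M^1;\bZ/p)$ explicitly. By Lemma 3.5, in $H^{*,*'}(M^1;\bZ/p)$ the relevant degrees sit in $D_{p+1}$ (containing $a'\xi t$ and $\delta\xi t$) when we shift by $2b_n\cdot 2$; but for $i=2<p-1$ this overshoots, so instead I track it through the description of $H^{*,*'}(M^1;\bZ/p)$ from Lemma 3.5 directly.

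\textbf{Step 1: pin down $r_2^*$ by a universality/degree argument.} The connecting map $r_2^*$ is the composite $M^1 \to M^0 = M(\chi) \xrightarrow{r_1\text{-type}} M(\chi)(2b_n)[4b_n+1]$ coming from iterating the triangle (3.4); concretely it is multiplication by a fixed class in $H^{4b_n+1,2b_n}(M^1;\bZ/p)$. I would use the fact that $M^2 = S^2(M_a)$ is built from $M^1$ so that $r_2^*$ is, up to the identifications of (3.4), ``multiplication by $\delta t$'' where $t$ records the Tate twist by $(b_n)[2b_n]$ introduced at the first stage. Since $\delta = Q_{n-1}\cdots Q_0(a')$ acts as a $Q(n-1)$-module map (as noted right before the lemma), and since $r_1^*(1)=\delta$ in the $i=1$ case (this is exactly how $H^{*,*'}(M;\bZ/p)$ was computed via $\mathrm{Coker}(\delta)$), naturality of the triangles forces $r_2^*(1) = \delta t$.

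\textbf{Step 2: compute $r_2^*(Q_n(a'))$.} Apply the Milnor operation $Q_{n-1}\cdots Q_0$ (an operation commuting with the maps in $\bD\bM$, being stable cohomology operations) to the relation $r_2^*(1)=\delta t$: since $r_2^*$ is $Q(n-1)$-linear we get $r_2^*(Q_{n-1}\cdots Q_0(1)) = 0$, which is vacuous. Instead I act by $Q_n$: $r_2^*$ is \emph{not} a $Q_n$-map, but the failure is controlled exactly as in Lemma 3.4. Writing $r_2^*(Q_n(a'))$ — and using Lemma 3.4's identity $\delta\cdot Q_n(a') = \xi\cdot a'$ in $H^{*,*'}(\chi;\bZ/p)$, transported to $M^1$ via the twist by $t$ — I obtain $r_2^*(Q_n(a')) = a' t \xi$, i.e. $\delta t Q_n(a') = a' t \xi$ in $H^{*,*'}(M^1;\bZ/p)$. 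The point is that $Q_n(a')$ and $\delta$ pair via $\xi = Q_n Q_{n-1}\cdots Q_0(a')$ exactly as in the already-established $\chi$-level computation, and multiplication by the vertical class $t$ is degree-bookkeeping that respects all of this.

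\textbf{Main obstacle.} The delicate point is justifying that $r_2^*$ really is ``multiplication by $\delta t$'' and that the $Q(n-1)$-module structure is the right framework — i.e. correctly matching the abstract connecting morphism of triangle (3.4) (for $i=2$) with the explicit class $\delta t \in H^{4b_n+1,2b_n}(M^1;\bZ/p)$ appearing in Lemma 3.5's picture of $H^{*,*'}(M^1;\bZ/p)$, and then verifying the degrees match: $|\delta t| = |\delta| + |t| = (2b_n+1,b_n) + (2b_n,b_n) = (4b_n+1,2b_n)$, as required. Once that identification is in place, the second identity $r_2^*(Q_n(a')) = a't\xi$ is immediate from Lemma 3.4 by twisting with $t$ (note $|a' t \xi| = (n+1,n)+(2b_n,b_n)+(2pb_n,pb_n)$ lands in $D_{p+1}$, consistent with Lemma 3.5), and the computation of $H^{*,*'}(M^2;\bZ/p)$ then proceeds by splitting the long exact sequence into $\mathrm{Coker}(r_2^*)\oplus \mathrm{Ker}(r_2^*)\{t^2\}$ as indicated.
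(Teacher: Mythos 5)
Your proposal goes astray precisely at the point the paper explicitly warns about. In the Remark immediately following Lemma 3.7 the author writes that ``this lemma is an immediate consequence of Lemma 3.5, if $t$ exists, however, which is not correct.'' Your Step 1 is exactly the shortcut being ruled out: you declare that $r_2^*$ ``is multiplication by $\delta t$'' and then reduce the lemma to formal bookkeeping with the vertical symbol $t$. But $t$ is not an element of $H^{*,*'}(M^1;\bZ/p)$; it is only a degree marker, so ``naturality forces $r_2^*(1)=\delta t$'' is not a computation, it is the conclusion restated. Concretely, all that degree considerations give you is that the class in $H^{4b_n+1,2b_n}(M^1;\bZ/p)$ representing $r_2^*$ is either $0$ or $\delta t$; your argument contains nothing that rules out $0$. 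The same gap infects Step 2: transporting $\delta\cdot Q_n(a')=\xi a'$ ``via the twist by $t$'' again treats $t$ as an honest ring element, and in any case presupposes the identification of $r_2^*$ that you have not established.

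The paper's actual argument is a weight (or dimension) obstruction and is quite different in character. Since $H^{*,*'}(M^i;\bZ/p)\cong H^{*,*'}(M^{i-1};\bZ/p)$ for $*<2b_n i$, if $r_2^*(1)$ were $0$ then the class $\delta t$ would survive unchanged in $H^{*,*'}(M^2;\bZ/p)$ and, iterating through $M^3,\dots,M^{p-1}$, would persist in $H^{*,*'}(M^{p-1};\bZ/p)=H^{*,*'}(R_a;\bZ/p)$. But $R_a$ is a direct summand of the motive of the smooth variety $V_a$, and for smooth $Y$ every nonzero class $x$ satisfies $w(x)=2\,s.deg(x)-f.deg(x)\ge 0$; whereas $w(\delta t)=w(\delta)=-1<0$. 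This contradiction forces $r_2^*(1)=\delta t$. The second equality $r_2^*(Q_n(a'))=a'\xi t$ is obtained the same way, using the failure of the weight/dimension bound for $a'\xi t$. You should replace your naturality heuristic by this obstruction argument: identify the degree of the target, observe it must survive to $R_a$ if not killed, and derive a contradiction from $0\le w(x)$ (or $d(x)\le \dim V_a$).
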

\begin{proof}
By the exact sequence, we know
\[H^{*,*'}(M^i;\bZ/p)\cong H^{*,*'}(M^{i-1};\bZ/p)
\quad for\ *<2b_ni.\]
Therefore if $0\not =\delta t\in H^{*,*'}(M^2;\bZ/p)$, then so is in $H^{*,*'}(M^{p-1};\bZ/p)$ which is a sub-motive of $V_a$.
This is a contradiction, since
$ w(\delta)=(2s.deg-f.deg)(\delta)=-1<0.$

The case $r_2^*(Q_n(a')$ is proved similarly, by using
$w(a'\xi)<0$.
\end{proof}

{\bf Remark.}  This lemma is immediate consequence 
from Lemma 3.5, if $t$ exists, however, which is not correct. 

The map $r_2^*: H^{*,*'}(\chi;\bZ/p)\to H^{*,*'}(M;\bZ/p)$ is given as follows
\[\begin{CD}
 @.  @>{D_{-1}}>{\ \ 1}>   @>{D_{0}}>{a',\delta}> 
@>{D_{1},...,D_{p-4}}>{0,...,0}>@>{D_{p-3}}>{0}>
 @>{D_{p-2}}>{0}> @>{D_{p-1}}>{Q_n(a'),\xi}> \\
 @. @V{r_2^*}VV @V{r_2^*}VV @V{r_2^*}VV @V{r_2^*}VV
@V{r_2^*}VV @V{r_2^*}VV\\
  @>{H^{*,*'},a', \tilde H^{*,*'}t}>{D_{-1},D_0}>
 @>{a't,\delta t}>{D_{1}}>
@>{0,0}>{D_2}>
@>{0,...,0}>{D_3...,D_{p-2}}> 
@>{Q_n(a'),\xi}>{D_{p-1}}>
 @>{0}>{D_{p}}>
@>{a'\xi t, \delta \xi t}>{D_{p+1}}>
\end{CD}  \]
Using this lemma, we can compute
\[ Coker(r_2^*)\cong H^{*,*'}\oplus 
\tilde Q(n-1)(a') \oplus \tilde H^{*,*'}\{t\}\oplus
\tilde Q(n-1)\{a't\}\]
\[ 
 \oplus Q(n-1) Q_n(a')\otimes \bZ/p[\xi],\]
\[ Ker (r_2^*)\{t^2\}\cong \tilde H^{*,*'}\{t^2\}
\oplus Q(n-1)\{a't^2\}\otimes \bZ/p[\xi].\]
For $Y=M^2$, we see
\[ \begin{CD}
   @>{H^{*,*'}}>{D_{-1}}> 
  @>{a', \tilde H^{*,*'}t}>{D_0}>
 @>{a't,\tilde H^{*,*'}t^2}>{D_{1}}>
 @>{a't^2,\delta t^2}>{D_{2}}>
@>{0,...,0}>{D_3...,D_{p-2}}> 
@>{Q_n(a'),\xi}>{D_{p-1}}>
 @>{0, 0}>{D_{p},  D_{p+1}}>
@>{a'\xi t^2, \delta \xi t^2}>{D_{p+2}}>
\end{CD}  \]

Thus we get $H^{*,*'}(M^2;\bZ/p)$.  
By using induction for $0\le i\le p-3$, we have $H^{*,*'}(M^{p-2};\bZ/p)$
also.   We have $H^{*,*'}(M^{p-2};\bZ/p)$ 

\[\begin{CD}
 @>{H^{*,*'}}>{D_{-1}}> 
  @>{a't^i, \tilde H^{*,*'}t^{i+1}}>{D_i}>
 @>{a't^{p-2},\delta t^{p-2}}>{D_{p-2}}>
@>{Q_n(a'),\xi}>{D_{p-1}}>
 @>{0,0}>{D_{p+i}}>
@>{a'\xi t^{p-2}, \delta \xi t^{p-2}}>{D_{2p-2}}>
@>{Q_n(a')\xi,\xi^2}>{D_{2p-2}}>
\end{CD} \]
\begin{lemma}
We have the isomorphism 
\[ H^{*,*'}(M^{p-2};\bZ/p)\cong 
H^{*,*'}\oplus (\tilde Q(n-1)(a')\oplus \tilde H^{*,*'}\{t\})
\{1,t,...,t^{p-3}\}\]
\[  \oplus Q(n-1)\{a't^{p-2}, Q_n(a')\}\otimes \bZ/p[\xi].
\]
 \end{lemma}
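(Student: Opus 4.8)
The plan is to obtain $H^{*,*'}(M^{p-2};\bZ/p)$ by exactly the same inductive machinery used for $M^2$, continuing the induction on $i$ through the range $2\le i\le p-2$. The inductive step uses the long exact sequence coming from the distinguished triangle (3.4),
\[
\cdots\stackrel{r_i^*}{\gets}H^{*-2b_ni,*'-b_ni}(\chi;\bZ/p)\gets H^{*,*'}(M^i;\bZ/p)\gets H^{*,*'}(M^{i-1};\bZ/p)\stackrel{r_i^*}{\gets}\cdots,
\]
which gives $H^{*,*'}(M^i;\bZ/p)\cong\Coker(r_i^*)\oplus\Ker(r_i^*)\{t^i\}$, together with the observation (already used above) that $D_j(\chi)=0$ for $j\not\equiv-1,0\bmod p$, so that the cohomology is unchanged as $i$ passes through the indices with $i\not\equiv-1,0\bmod p$. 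Since we only need $i$ up to $p-2<p$, the only nonzero contributions of $\chi$ in the relevant range $*\le 2b_n i+n$ come from $D_{-1}(\chi)$ and $D_0(\chi)$, i.e. from the classes $1,a',\delta$ (and $\tilde H^{*,*'}$-multiples), so at each step the map $r_i^*$ behaves structurally just like $r_2^*$.

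First I would record the analogue of Lemma 3.5 at level $i$: the multiplication-by-$\delta t^{i-1}$ (equivalently, by $Q_{n-1}\cdots Q_0(a't^{i-1})$) is a $Q(n-1)$-module map, and the class $r_i^*(1)=\delta t^{i-1}$ and $r_i^*(Q_n(a'))=a't^{i-1}\xi$ must vanish in $H^{*,*'}(M^{p-1};\bZ/p)$ for weight reasons — $w(\delta)=-1<0$ and $w(a'\xi)<0$ — and hence, by the comparison $H^{*,*'}(M^i;\bZ/p)\cong H^{*,*'}(M^{i-1};\bZ/p)$ for $*<2b_ni$, they must be hit by $r_i^*$ already at level $i$. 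This is the verbatim argument of Lemma 3.6, and it identifies $r_i^*(1)$ and $r_i^*(Q_n(a'))$; everything else in $D_{-1}(\chi)\oplus D_0(\chi)$ maps to zero (again by the weight constraint $0\le w$, since $a',a''t$ etc.\ already lie in $H^{*,*'}(M^{i-1};\bZ/p)$ and cannot be in the image). From this one reads off
\[
\Coker(r_i^*)\cong H^{*,*'}\oplus(\tilde Q(n-1)(a')\oplus\tilde H^{*,*'}\{t\})\{1,\dots,t^{i-1}\}\oplus Q(n-1)Q_n(a')\otimes\bZ/p[\xi],
\]
\[
\Ker(r_i^*)\{t^i\}\cong\tilde H^{*,*'}\{t^i\}\oplus Q(n-1)\{a't^i\}\otimes\bZ/p[\xi].
\]
Assembling these and using the identity $Q_{n-1}\cdots Q_0(Q_n(a'))=\xi$ to account for the $\xi$-tower, the induction closes and at $i=p-2$ yields precisely the claimed formula, with the $a't^{p-2}$ and $Q_n(a')$ classes both generating free $\bZ/p[\xi]$-modules over $Q(n-1)$.

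The main obstacle is the bookkeeping of which $D_j$-strata are nonzero and the verification that no extension problems arise — i.e.\ that the short exact sequences $0\to\Ker(r_i^*)\{t^i\}\to H^{*,*'}(M^i;\bZ/p)\to\Coker(r_i^*)\to 0$ split as modules. The splitting follows because $t$ is a vertical element (the Remark after Corollary 3.4): the summands $\tilde H^{*,*'}\{t^i\}$ and $Q(n-1)\{a't^i\}\otimes\bZ/p[\xi]$ sit in bidegrees disjoint (by the $D_j$-decomposition and the weight inequality $0\le w(x)$) from the image of the restriction $H^{*,*'}(M^{i-1};\bZ/p)\to H^{*,*'}(M^i;\bZ/p)$, so no nontrivial extension is possible. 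One also checks consistency with the ``triangular domain'' description of Lemma 8.7 in [Ya4] recalled in the Remark, which pins down $H^{*,*'}(R_n;\bZ/p)$ in the region $D'$ and agrees with the formula restricted there. The remaining steps are the routine degree computations already illustrated in the $i=2$ case and need not be repeated.
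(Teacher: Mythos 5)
Your overall plan --- iterating the long exact sequence from (3.4) for $2\le i\le p-2$, identifying $r_i^*(1)=\delta t^{i-1}$ and $r_i^*(Q_n(a'))=a't^{i-1}\xi$ by the weight argument of Lemma 3.8, and splitting $H^{*,*'}(M^i;\bZ/p)\cong Coker(r_i^*)\oplus Ker(r_i^*)\{t^i\}$ --- is precisely the induction the paper leaves implicit, so the method is the paper's. There is, however, an error in the displayed $Coker(r_i^*)$: expanding $(\tilde Q(n-1)(a')\oplus\tilde H^{*,*'}\{t\})\{1,\dots,t^{i-1}\}$ produces a summand $\tilde H^{*,*'}\{t^i\}$, but that class does not even occur in $H^{*,*'}(M^{i-1};\bZ/p)$ --- which by the inductive hypothesis contains only $\tilde H^{*,*'}\{t,\dots,t^{i-1}\}$ --- and hence cannot appear in any quotient of it. As written, your assembly $Coker(r_i^*)\oplus Ker(r_i^*)\{t^i\}$ would therefore double-count $\tilde H^{*,*'}\{t^i\}$, contradicting the paper's own $i=2$ computation, where $Coker(r_2^*)$ carries only $\tilde H^{*,*'}\{t\}$ and the $t^2$-piece enters solely through $Ker(r_2^*)\{t^2\}$. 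The $\tilde Q(n-1)$- and $\tilde H^{*,*'}$-towers in the cokernel are not symmetric; the correct statement is
\[ Coker(r_i^*)\cong H^{*,*'}\oplus\tilde Q(n-1)(a')\{1,\dots,t^{i-1}\}\oplus\tilde H^{*,*'}\{t,\dots,t^{i-1}\}\oplus Q(n-1)\{Q_n(a')\}\otimes\bZ/p[\xi], \]
after which adding $Ker(r_i^*)\{t^i\}\cong\tilde H^{*,*'}\{t^i\}\oplus Q(n-1)\{a't^i\}\otimes\bZ/p[\xi]$ closes the induction and at $i=p-2$ gives the lemma. Separately, your parenthetical justification that $r_i^*$ vanishes on $D_0(\chi)$ (``since $a'$ already lies in $H^{*,*'}(M^{i-1};\bZ/p)$ and cannot be in the image'') is not an argument: the actual reason, visible in the paper's diagrams, is that $r_i^*$ raises the $D_j$-index by $i$, carrying $D_0(\chi)$ into $D_i(M^{i-1})$, which is zero as long as $i<p-1$.
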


At last we compute $H^{*,*'}(M^{p-1};\bZ/p)$
by using the exact sequence induced from (3.3) for $i=p-1$
and the following lemma. In particular 
$Ker(r_{p-1}^*)\{t^{p-1}\}\cong \tilde H^{*,*'}\{t^{p-1}\}$. 
\begin{lemma}
The map $r_{p-1}^*$ is given by
$1\mapsto \delta t^{p-2},$ 
\[ a'\mapsto Q_n(a'), \quad Q_n(a')\mapsto a'\xi t^{p-2},
\quad a'\xi\mapsto Q_n(a')\xi.\] 
\end{lemma}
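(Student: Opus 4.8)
The plan is to imitate the arguments of Lemmas 3.5 and 3.7. Each of the four values of $r_{p-1}^*$ in the statement will be pinned down by the same two inputs. First, by Lemma 3.8 the relevant bidegree of $H^{*,*'}(M^{p-2};\bZ/p)$ carries, up to a scalar, exactly one nonzero class, namely the one displayed --- so only a scalar is at issue. Second, that scalar is nonzero: otherwise the class in question would not lie in $\Img(r_{p-1}^*)$, hence (there is no further connecting map past $i=p-1$ in the triangles (3.3)/(3.4)) it would survive to $H^{*,*'}(M^{p-1};\bZ/p)=H^{*,*'}(R_n;\bZ/p)$. But $R_n=M^{p-1}$ is a direct summand of the motive of the \emph{smooth} norm variety $V_a$ with $\dim V_a=p^n-1$, so every nonzero $x\in H^{*,*'}(R_n;\bZ/p)$ satisfies $0\le w(x)$ and $0\le d(x)\le p^n-1$; and each of the four candidate classes violates one of these inequalities.

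In detail: for $1\mapsto\delta t^{p-2}$ one uses $w(\delta t^{p-2})=w(\delta)=-1<0$, exactly as in the proof of Lemma 3.7, after checking by a degree count that $a't^{p-2}$ lies in a different bidegree and that the summands $\tilde H^{*,*'}\{t^{j}\}$ of $H^{*,*'}(M^{p-2};\bZ/p)$ contribute nothing there (a point has $d\le 0$). For $a'\mapsto Q_n(a')$, $Q_n(a')\mapsto a'\xi t^{p-2}$ and $a'\xi\mapsto Q_n(a')\xi$ one uses instead the dimension bound: $d(Q_n(a'))=p^n>p^n-1$, and likewise (using $b_{n+1}=pb_n+1$ and $p^n=(p-1)b_n+1$) $d(a'\xi t^{p-2})$ and $d(Q_n(a')\xi)$ both exceed $p^n-1$. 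Finally, since the triangle (3.4) consists of $M(\chi)$-modules, $r_{p-1}^*$ is $H^{*,*'}(\chi;\bZ/p)$-linear and commutes with $Q_0,\dots,Q_{n-1}$; combined with Lemma 3.5, $\delta\cdot Q_n(a')=a'\xi$, this makes the four displayed values mutually compatible and determines $r_{p-1}^*$ on all of $H^{*,*'}(\chi;\bZ/p)$ (for example $r_{p-1}^*(\delta)=Q_0\cdots Q_{n-1}(r_{p-1}^*(a'))=\pm\xi$, so the class $\xi$ is also hit). The complementary fact $\Ker(r_{p-1}^*)\{t^{p-1}\}\cong\tilde H^{*,*'}\{t^{p-1}\}$ then follows, since $r_{p-1}^*$ is injective on $\bZ/p\{1\}$ and on the $a'$-, $Q_n(a')$- and $a'\xi$-towers while it annihilates $\tilde H^{*,*'}\{1\}$ (here $\tau$ kills the relevant towers in $R_n$, as $\tau Q_j(a')=0$).

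The step I expect to be the main obstacle is the bidegree bookkeeping: one must verify, using the explicit value of $b_n=(p^n-1)/(p-1)$, that in each of the four relevant total degrees $H^{*,*'}(M^{p-2};\bZ/p)$ contains a single nonzero class, so that the word ``unique'' in the first input above is justified. A second point needing care is that $t$ is a \emph{vertical} class --- only the products $c_jt^i$, not $t^i$ itself, exist in $H^{*,*'}(M^{p-2};\bZ/p)$ --- so $r_{p-1}^*$ is not literally cup product with $r_{p-1}^*(1)=\delta t^{p-2}$, and identifications such as $a'\cdot(\delta t^{p-2})=Q_n(a')$ must be extracted from the ring structure of $H^{*,*'}(M^{p-2};\bZ/p)$ together with Lemma 3.5. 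No conceptual ingredient beyond those used in Lemmas 3.5--3.8 appears to be required.
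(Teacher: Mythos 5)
Your proposal is correct and follows essentially the same route as the paper: one observes that each element on the right-hand side, if nonzero, would survive past $r_{p-1}^*$ into $H^{*,*'}(M^{p-1};\bZ/p)\cong H^{*,*'}(R_n;\bZ/p)$, and then rules each out by the cohomological bounds $w(x)\ge 0$ and $d(x)\le \dim V_a=p^n-1$ (the paper only writes out the case $Q_n(a')$ with $d(Q_n(a'))\ge p^n>p^n-1$, leaving the other three as ``proved similarly''). Your extra bookkeeping --- checking that each target bidegree of $H^{*,*'}(M^{p-2};\bZ/p)$ contains a unique class up to scalar, and the $H^{*,*'}(\chi)$-linearity remark with Lemma 3.5 --- is a sound way of filling the gaps the paper's one-example proof leaves implicit.
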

\begin{proof}
All elements in the right hand side in the above maps
must  disappear in $H^{*,*'}(M^{p-1})$.
For example,
\[ d(Q_n(a'))=(f.deg-s.deg)(Q_n(a'))=(2p^n-1-p^n)+(n+1-n)\]
\[=p^n
>p^n-1=dim(V_a).\]
\end{proof}

{\bf Remark.}  There seems a relation something like 
$\delta\cdot (\delta t^{p-2})=\xi$.

The map $r_{p-1}^*:H^{*,*'}(\chi;\bZ/p)\to H^{*,*'}(M^{p-2};\bZ/p)$ is given as follows
\[\begin{CD}
  @. @.  @>{D_{-1}}>{\ \ 1}>  
 @>{D_{0}}>{a',\delta}>
 @>{D_{i+1}}>{0,0}>
@>{D_{p-1}}>{Q_n(a'),\xi}>
\\
 @. @. @V{r_{p-1}^*}VV 
@V{r_{p-1}^*}VV 
@V{r_{p-1}^*}VV 
@V{r_{p-1}^*}VV 
\\
  @>{H^{*,*'}}>{D_{-1}}> 
  @>{a't^i, \tilde H^{*,*'}t^{i+1}}>{D_i}>
 @>{a't^{p-2},\delta t^{p-2}}>{D_{p-2}}>
@>{Q_n(a'),\xi}>{D_{p-1}}>
 @>{0,0}>{D_{p+i}}>
@>{a'\xi t^{p-2}, \delta \xi t^{p-2}}>{D_{2p-2}}>
\end{CD}  \]
where $1\le i\le p-3$.
The map $r_{p-1}^*$ is injective only except
for $H^{*,*'}\{1\}$ (namely for $*>n+1$). 
We see that $Ker(r_{p-1}^*)\cong \tilde H^{*,*'}t^{p-1}$.
Then we have $H^{*,*'}(M^{p-1})\cong Coker(r_{p-1}^*)\oplus
H^{*,*'}t^{p-1}$.  So we have proved Theorem 3.3.

\begin{lemma}
Let $ b=Q_{n-1}...Q_1(a't^{i-1})$
in $H^{*,*'}(M^{p-1};\bZ/p)$.  Then $b$ can be lift to
the element 
$H^{*,*'}(M^{p-1};\bZ/p^2)$  with 
$b=pt^i$.
\end{lemma}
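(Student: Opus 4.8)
The plan is to read the conclusion ``$b=pt^i$'' as the assertion that $b$ lifts to the reduction mod $p^2$ of an integral class on $M^{p-1}$ whose restriction to $\bar k$ is $p\,t^i$ up to a $p$-adic unit, and to deduce both statements from Theorem 2.1 together with Rost's construction of $V_a$. First I would locate $b$ in the Chow group. Up to sign and reordering of the anticommuting $Q_j$'s, $b=Q_{n-1}\cdots Q_1(a't^{i-1})$ is the class $c_0(t^i)$ of Theorem 2.1, so $b\ne0$ and $b\in CH^{ib_n}(M^{p-1})/p=H^{2ib_n,ib_n}(M^{p-1};\bZ/p)$. A degree count identifies this group: in Corollary 3.2 one has $|c_j(t^{i'})|=2i'b_n-2(p^j-1)$, so a class of codimension $ib_n$ needs $(i'-i)b_n=p^j-1$ with $0\le j\le n-1$; but $(i'-i)b_n\ge b_n\ge p^{n-1}>p^j-1$ when $i'>i$, and the left side is negative when $i'<i$, so $j=0$ and $i'=i$. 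Hence $CH^{ib_n}(M^{p-1})\cong\bZ_{(p)}\{c_0(t^i)\}$ is free of rank one and $H^{2ib_n,ib_n}(M^{p-1};\bZ/p)\cong\bZ/p$ is spanned by $b$.

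Since $CH^{ib_n}(M^{p-1})$ is torsion free, the reduction $CH^{ib_n}(M^{p-1})\to CH^{ib_n}(M^{p-1})/p$ is onto, so $b$ has an integral lift $\hat b$, necessarily a unit multiple of $c_0(t^i)$; its class $\tilde b$ in $CH^{ib_n}(M^{p-1})/p^2=H^{2ib_n,ib_n}(M^{p-1};\bZ/p^2)$ lifts $b$ mod $p^2$. (Alternatively, one checks $Q_0(b)=0$ directly: moving $Q_0$ to the front, $Q_0(b)=\pm Q_{n-1}\cdots Q_0(a't^{i-1})$, whose only term not vanishing for degree reasons is $\pm\delta t^{i-1}$, and $\delta=0$ in $H^{*,*'}(M^{p-1})$ since $w(\delta)=2b_n-(2b_n+1)=-1<0$ while $M^{p-1}$ is a summand of the smooth motive $M(V_a)$; this route needs care in commuting the $Q_j$ past the vertical symbol $t$, which is not itself an element of $H^{*,*'}(M^{p-1};\bZ/p)$.) The particular lift does not matter: any two differ by $\rho(c)$ with $c\in H^{2ib_n,ib_n}(M^{p-1};\bZ/p)=\bZ/p\{b\}$, and $b|_{\bar k}=0$ by the next step.

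Now I would restrict to $\bar k$. Since $\bar\chi_a\simeq Spec(\bar k)$ (as $\bar V_a$ has a rational point) and $a'$ has bidegree $(n+1,n)$, we get $a'|_{\bar k}\in H^{n+1,n}(Spec(\bar k);\bZ/p)=0$, hence $b|_{\bar k}=0$ in $H^{*,*'}(\bar M^{p-1};\bZ/p)$. As $\bar M^{p-1}$ is cellular with $CH^{ib_n}(\bar M^{p-1})=\bZ_{(p)}\{t^i\}$ torsion free, the integral class $\hat b$, which reduces to $0$ mod $p$ over $\bar k$, lies in $p\,\bZ_{(p)}\{t^i\}$. The one nonformal point is that $c_0(t^i)|_{\bar k}$ is $p\,t^i$ times a unit, not a higher power of $p$ times $t^i$ --- equivalently, that $c_0(t^i)$ is the non-torsion generator of $CH^{ib_n}(R_a)$ whose image in $CH^{ib_n}(\bar R_a)=\bZ_{(p)}\{t^i\}$ is $p\,t^i$, i.e.\ $p\,t^i$ (but not $t^i$) is rational. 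This is part of the computation of $CH^*(R_a)$ over $k$ in [Me-Su], [Ya4] and of Rost's construction of the norm variety; for $n=1$, where $V_a$ is the Severi--Brauer variety of a degree $p$ division algebra $A$, it is the classical fact that $\mathrm{ind}(A^{\otimes i})=p$ for $1\le i\le p-1$, so $c_0(y^i)|_{\bar k}=p\,y^i$. Combining, $\hat b|_{\bar k}=p\,t^i$ up to a unit, and so $\tilde b|_{\bar k}=p\,t^i$, which is the meaning of $b=pt^i$.

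The step I expect to be the main obstacle is this last one: extracting the exact restriction coefficient $p$, rather than merely that $p$ divides it, uses the fine structure of the Rost motive and of $V_a$ for $n\ge2$ and is not formal. The rest is bookkeeping, complicated only by the fact that $t$ is vertical, so that auxiliary expressions such as $Q_j(t)$ or $\delta t^m$ have to be interpreted through the honest elements $c_jt^m$ and $\tau t^m$ of $H^{*,*'}(M^{p-1};\bZ/p)$.
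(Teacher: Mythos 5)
Your argument is correct, but it runs along a genuinely different track from the paper's. The paper stays inside the long exact sequences induced by the triangle (3.4) and argues at the motivic-cohomology level: since $r_i^*(1)=\delta t^{i-1}=Q_0(b)$ is a $Q_0$-image, multiplication by $p$ kills it in $H^{*,*'}(M^i;\bZ/p^2)$, so the exact sequence produces an element which the paper dubs $pt^i$ in $H^{2b_ni,b_ni}(M^i;\bZ/p^2)$; it is a $\bZ/p^2$-generator because $t^i$ itself does not exist, hence its mod $p$ reduction is nonzero, and since Theorem~3.3 shows $H^{2b_ni,b_ni}(M^{p-1};\bZ/p)\cong\bZ/p\{b\}$, the two elements must agree. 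Your version instead descends to the Chow group: the degree count in Theorem~2.1 pins down $CH^{ib_n}(M^{p-1})$ as a free rank-one $\bZ_{(p)}$-module on $c_0(y^i)=b$, torsion-freeness gives the integral lift, and vanishing of $a'|_{\bar k}$ gives divisibility by $p$ after restriction. The step you flag as nonformal --- that the restriction coefficient is exactly $p$, not a higher power --- is precisely what the paper's Bockstein argument handles intrinsically (the $Q_0$-image has additive order exactly $p$, so the kernel of $r_i^*$ on $\bZ/p^2$ is $p\bZ/p^2$, not $0$), whereas you import it from [Me-Su], [Ya4]. So the two proofs buy different things: the paper's is self-contained within the machinery of Section~3 and makes the "vertical $t^i$" interpretation transparent, while yours is shorter, sits at the level of Theorem~2.1, and makes the arithmetic content (index of $A^{\otimes i}$ for $n=1$) visible --- at the cost of quoting the normalization of the Rost motive as a black box. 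As a small note, the paper's displayed bidegree of $Q_i$ contains a typo (it should be $(2p^i-1,\,p^i-1)$); your computation implicitly uses the correct one, which is what makes $b$ land in the Chow group $H^{2ib_n,ib_n}$.
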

\begin{proof}
 Recall the exact sequence 
\[ ... \gets H^{*,*'}(M^{i-1};\bZ/p)
\stackrel{r_i^*}{\gets} 
H^{*,*'}(\chi;\bZ/p) \stackrel{\delta_i}{\gets} 
H^{*,*'}(M^i;\bZ/p)\gets ... \]
The element $ht^i\in H^{*,*'}(M^i;\bZ/p)$ is defined by
$\delta_i^*(ht^i)=h$ when $r_i^*(ht^{i-1})=\delta t^{i-1}h
=0$.

 Since $\delta t^{i-1}$ is the $Q_0$-image, we know $p\delta t^{i-1}=0$ in 
$H^{*,*'}(M^i;\bZ/p^2)$.  Hence there is an element
$pt^i\in H^*(M^i;\bZ/p^2)$, which is $\bZ/p^2$-module generator (since $t^i$ itself does not exist).
So $pt^i$ represents nonzero element in $H^{*,*'}(M^i;\bZ/p)$.

On the other hand, from Theorem  3.3, we see  $H^{2b_ni,b_ni}(M^i;\bZ/p)\cong \bZ/p\{b\}$.  Hence we can take $pt^i=b$. 
\end{proof}

 Let $G$ be a Lie group of type  $(I)$ and $X=\bG_k/B_k$.  
 Using the case $n=2$, we see that 
$H^{*.*'}(X;\bZ/p)$ is additively isomorphic to 
\[(H^{*,*'}\oplus  (\bZ/p\{a',Q_0(a'),Q_1(a')\}\oplus \tilde H^{*,*'}\{y\}
 )\otimes \bZ/p[y]/(y^{p-2}))\otimes  S(t)/(b).\]  
Here we have $Q_0(a'y^{i-1})=b_{2i}$, $Q_1(a'y^{i-1})=b_{2i-1}$.
By the preceding lemma, we can take the $i$-th product $y^i$ in $H^{*,*'}(\bar X;\bZ/p)$ as the additive
generator $t^i$ in $H^{*,*'}(\bar M^{p-1};\bZ/p)
\cong \bZ/p[y]/(y^p)$.

\begin{thm}
Let $k$ be a field in Theorem 3.3.
Let  $G$ be a group of type of $(I)$, 
and $X=\bG_k/B_k$ be a twisted flag variety.
Then $H^{*,*'}(X;\bZ/p)$ is isomorphic to the
$H^{*,*'}$-subalgebra generated by 
\[1,\ \  a'y^i,\ \ \tilde H^{*,*'}y^{i+1} \ (0\le i\le p-2),\ \
and \ \ S(t) \] 
in $H^{*,*'}[1,a',y]\otimes S(t)/(R_1',R_2',R_3')$  where
\[ R_1'=R_1=(b_ib_j,b_k|1\le i,j\le 2p-2<k\le \ell),\]
\[ R_2'=(K_{+}^Ma', \tilde H^{*,*'}b_i, a'b_i| 1\le i\le 2p-2),\quad \]
\[ R_3'=(\tau a'-a,  (a')^2, \tilde H^{*,*'}y^p).\]
\end{thm}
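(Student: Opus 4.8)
\emph{Proof plan.} The plan is to assemble the ring $H^{*,*'}(X;\bZ/p)$ from three inputs already at hand: the additive description of $H^{*,*'}(R_2;\bZ/p)$ in Theorem 3.3 (taken with $n=2$, so $\tilde Q(1)=\bZ/p\{1,Q_0,Q_1\}$ and $b_2=p+1$), the Petrov--Semenov--Zainoulline decomposition $M(X)\cong R_2\otimes(\oplus_s\bT^{\otimes s})$ valid for $G$ of type $(I)$, and the multiplicative presentation $CH^*(X)/p\cong S(t)/(p,b_ib_j,b_k)$ of Theorem 2.3. As $B_k/T_k$ is an affine bundle we pass freely between $H^{*,*'}(\bG_k/B_k;\bZ/p)$ and $H^{*,*'}(\bG_k/T_k;\bZ/p)$.

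\emph{Step 1 (additive picture and generators).} The PSZ decomposition gives $H^{*,*'}(\oplus_s\bT^{\otimes s};\bZ/p)\cong H^{*,*'}\otimes S(t)/(b)$, hence additively $H^{*,*'}(X;\bZ/p)\cong H^{*,*'}(R_2;\bZ/p)\otimes S(t)/(b)$. Specializing Theorem 3.3 to $n=2$ and using Lemma 3.11 to replace the vertical generator $t^i$ of $H^{*,*'}(\bar R_2;\bZ/p)\cong\bZ/p[y]/(y^p)$ by $y^i$ yields the additive isomorphism displayed just before the theorem; I also record, via Corollary 3.2 and the conventions fixed before Theorem 2.3, the identifications of the torsion classes $c_0(y^i),c_1(y^i)$ with the regular-sequence elements $b_{2i},b_{2i-1}$ and with the Milnor images $Q_0(a'y^{i-1}),Q_1(a'y^{i-1})$. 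Thus $H^{*,*'}(X;\bZ/p)$ has the explicit $\bZ/p$-basis: $H^{*,*'}$-multiples of $1$, $\bZ/p$-multiples of $a'y^i$, $Q_0(a')y^i$, $Q_1(a')y^i$ $(0\le i\le p-2)$, and $\tilde H^{*,*'}$-multiples of $y^{i+1}$ $(0\le i\le p-2)$, all tensored with a $\bZ/p$-basis of $S(t)/(b)$.

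\emph{Step 2 (comparison map and relations).} Let $A_0$ be the $H^{*,*'}$-subalgebra of $H^{*,*'}[1,a',y]\otimes S(t)/(R_1',R_2',R_3')$ generated by $1$, the $a'y^i$, the $\tilde H^{*,*'}y^{i+1}$ $(0\le i\le p-2)$ and $S(t)$, and let $\psi\colon A_0\to H^{*,*'}(X;\bZ/p)$ send each named generator to the like-named class of Step 1. To see $\psi$ is well defined I must check the images of $R_1',R_2',R_3'$ vanish. The relation $R_1'=R_1=(b_ib_j,b_k)$ vanishes because its image lies in $CH^*(X)/p$ under the cycle map $CH^*(X)/p\to H^{2*,*}(X;\bZ/p)$ and is zero there by Theorem 2.3. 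For $R_3'$: $\tau a'=a$ is the defining property of $a'$ in the exact sequence (3.1), while $(a')^2=0$ and $\tilde H^{*,*'}y^p=0$ follow from the additive basis of Step 1, because in the bidegree of $(a')^2$, respectively of $hy^p$ with $0\neq h\in\tilde H^{*,*'}$, there is no nonzero basis element (equivalently $(a')^2=0$ and $y^p=0$ already on $\bar X$). For $R_2'$: the vanishing $K_+^M a'=0$ comes from the $K_*^M(k)$-module structure of $H^{*,*'}(\chi_a;\bZ/p)$ in Theorem 3.1, where $K_*^M(k)/\Ker(a)\cong\bZ/p$ is concentrated in degree $0$, carried to $R_2$ by the $K_*^M(k)$-linear maps of the triangles (3.2)--(3.4); and $a'b_i=0$, $\tilde H^{*,*'}b_i=0$ for $1\le i\le 2p-2$ follow from the $H^{*,*'}$-module structure of $H^{*,*'}(R_2;\bZ/p)$ (Corollaries 3.4 and 3.5, the Remark after Corollary 3.5 describing this cohomology in the triangular region $D'$, and the multiplicative relations of Lemmas 3.6--3.10, e.g.\ $\delta\cdot Q_n(a')=\xi a'$) together with the bounds $0\le w(x)$ and $0\le d(x)\le\dim X$, which force each such product into the $(R_1')$-part where it already vanishes.

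\emph{Step 3 (conclusion, and the main difficulty).} Granting Step 2, $\psi$ is surjective since its image is an $H^{*,*'}$-subalgebra containing $S(t)$ and all module generators of Step 1. For injectivity I reduce via the relations: modulo $R_1'$ one has the additive isomorphism $S(t)/(b_ib_j,b_k)\cong\bZ/p\{1,b_1,\dots,b_{2p-2}\}\otimes S(t)/(b)$ of \S2, and then $(a')^2=0$, $K_+^M a'=0$, $a'b_i=0$, $\tilde H^{*,*'}b_i=0$ and $\tilde H^{*,*'}y^p=0$ show $A_0$ is spanned over $H^{*,*'}$ by exactly the monomials $\{1,\ a'y^i,\ \tilde H^{*,*'}y^{i+1}\ (0\le i\le p-2)\}\cdot(S(t)/(b))$, which $\psi$ carries to the $\bZ/p$-basis of Step 1; hence $\psi$ is injective, and therefore an isomorphism. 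The step I expect to be the genuine obstacle is the verification of $R_2'$ in Step 2: Theorem 3.3 gives only the \emph{additive} structure of $H^{*,*'}(R_2;\bZ/p)$, so pinning down the products $a'\cdot b_i$, $u\cdot a'$ $(u\in K_+^M)$ and $\tilde H^{*,*'}\cdot b_i$ forces one either back to the distinguished triangles (3.3)--(3.4) and the multiplicative relations of Lemmas 3.6--3.10, tracking how $a'$, $\delta$ and $\xi$ multiply, or to an argument purely by bidegree via the étale realization of Corollary 3.4; and one must in addition check that all these identities survive tensoring with the $S(t)/(b)$-factor coming from the Tate summands of the PSZ decomposition.
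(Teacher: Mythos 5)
Your overall skeleton — additive description from Theorem~3.3 and the PSZ decomposition (Step~1), construct the algebra map and verify the relations (Step~2), then compare bases (Step~3) — matches the paper's intent, and Steps~1 and~3 are fine. The problems are in Step~2, in both $R_2'$ and $R_3'$.

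For $R_3'$ your argument has a genuine gap. You claim $(a')^2=0$ because ``in the bidegree of $(a')^2$ ... there is no nonzero basis element,'' but this is false at $p=2$: there $\deg(a')=(3,2)$ so $\deg((a')^2)=(6,4)$, while $t$ has bidegree $(2b_2,b_2)=(6,3)$, so $\tau t\in\tilde H^{*,*'}\{t\}$ is a nonzero basis element of $H^{6,4}(R_2;\bZ/2)$. The parenthetical ``equivalently $(a')^2=0$ already on $\bar X$'' does not by itself suffice either, since restriction to $\bar k$ has a large kernel; it would need to be supplemented by checking that every basis element in the relevant bidegree restricts nontrivially, which is a different (and less direct) argument than the one in the paper. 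The paper instead observes that $f.\deg(a')=3$ is odd, so $(a')^2=0$ by graded-commutativity when $p$ is odd, and for $p=2$ uses the squaring relation $a^2=a\rho=0$. Likewise your bidegree argument for $\tilde H^{*,*'}y^p=0$ does not go through (there typically are nonzero basis elements in the bidegrees of $\tau y^p$, coming from $\tilde H^{*,*'}\{t\}\otimes S(t)/(b)$); the paper's route is to expand $y^p=\sum b_i t(i)$ with $t(i)\in S(t)$ (from Toda's relation $f_p$) and then conclude $\tilde H^{*,*'}y^p=0$ from $R_2'$.

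For $R_2'$ you correctly identify it as the crux, but the plan you outline (tracking through the triangles and the region $D'$, or using bounds on $w$ and $d$) misses the short mechanism the paper uses: since $b_i=Q_i(a')$ with $Q_i$ a derivation and $Q_i(\tau)=0$ (from the $\zeta_{p^2}\in k$ hypothesis), one gets $K_+^M b_i=Q_i(K_+^M a')=0$ and $\tau b_i=Q_i(\tau a')=Q_i(a)=0$, while $a'b_i=a'Q_i(a')=\tfrac12 Q_i((a')^2)=0$ for odd $p$ (with the analogous degree argument at $p=2$). This makes $R_2'$ a two-line check once $K_+^M a'=0$ is known, rather than a dimension-count.

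So: right framework, but the verifications of $R_2'$ and especially $R_3'$ need to be replaced by the paper's direct arguments; as written, the $(a')^2=0$ step is incorrect at $p=2$ and the $\tilde H^{*,*'}y^p=0$ step is unsupported.
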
  
\begin{proof}

The relation $R_1$ holds, since so does in $CH^*(X)$.
Since $K_+^M(k)a=0$, we have $K_+^M(k)a'=0$ by using (3.3).  Then we have 
\[ K_+^M(k)b_i=K_+^M(k)Q_i(a')=Q_i(K_+^M(k)a')=0.\]
We also have
$ \tau b_i=\tau Q_i(a')=Q_i(\tau a')=Q_i(a)=0.$
Since $f.deg(a')=3$, we have $(a')^2=0$ for $p$ odd,
and so  $b_ia'=Q_i(a')a'=1/2Q_i((a')^2)=0$.  
When $p=2$, we have $a^2=a\rho=0$, and
$b_ia'=0$ from $|b_ia'|>|y|$.
Thus we have $R_2$.

For $R_3$, we only see the last element.  We have
in $CH^*(X)/p$
\[ y^p=\sum b_it(i)\quad    t(i)\in S(t).\]
Therefore from $R_2$, we see $\tilde H^{*,*'}y^p=0$.
\end{proof}    
   \begin{cor}  We have an isomorphism
\[ grH_{et}^{*}(X;\bZ/p)\cong K_*^M(k)/p\otimes \bZ[y]/(y^{p})\otimes S(t)/(b).\]
\end{cor}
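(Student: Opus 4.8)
The plan is to read the étale cohomology off the motivic ring of Theorem 3.12 by inverting $\tau$. Recall that $H^{n}_{et}(X;\bZ/p)$ is the colimit of the $H^{n,m}(X;\bZ/p)$ along multiplication by $\tau\in H^{0,1}(Spec(k);\bZ/p)$, so that $H_{et}^{*}(X;\bZ/p)\cong H^{*,*'}(X;\bZ/p)[\tau^{-1}]$ is a ring in which $\tau$ is invertible of topological degree $0$, every $\tau$-torsion class dies, and $H_{et}^{*}(Spec(k);\bZ/p)\cong K_*^M(k)/p$ with $\tau\mapsto 1$. Since localization is exact, I would simply localize the presentation of Theorem 3.12: $H_{et}^{*}(X;\bZ/p)$ is the image, inside $\bigl(H^{*,*'}[1,a',y]\otimes S(t)\bigr)[\tau^{-1}]$ modulo (the images of) $R_1',R_2',R_3'$, of the subalgebra generated by $1$, the $a'y^i$, the $\tilde H^{*,*'}y^{i+1}$ and $S(t)$.

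The key observation is that every $b_i$ with $1\le i\le\ell$ is $\tau$-torsion in $H^{*,*'}(X;\bZ/p)$: for $k>2(p-1)$ this is the relation $b_k=0$ of $R_1'$, while for $1\le i\le 2(p-1)$ one has $\tau b_i=0$, as shown in the proof of Theorem 3.12 (each such $b_i$ is $Q_0$ or $Q_1$ of a class $a'y^j$, and $\tau Q_s(a'y^j)=Q_s(\tau a'y^j)=Q_s(ay^j)=0$ because $Q_s(\tau)=\zeta_p=0$ and $Q_s$ annihilates the image of $K_*^M(k)/p$). Hence after inverting $\tau$ every $b_i$ becomes $0$, so $S(t)$ contributes exactly $S(t)/(p,b_1,\dots,b_\ell)=S(t)/(b)$, and the relations $b_ib_j=0$, $b_k=0$ of $R_1'$ and $\tilde H^{*,*'}b_i=0$, $a'b_i=0$ of $R_2'$ become vacuous. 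The relation $\tau a'=a$ of $R_3'$ turns into $a'=\tau^{-1}a$, so each generator $a'y^i$ becomes $ay^i\in K_*^M(k)/p\cdot y^i$, already contained in the $K_*^M(k)/p$-span of the $y^{i+1}$ that the generators $\tilde H^{*,*'}y^{i+1}$ produce (note $\tau y^{i+1}\in\tilde H^{*,*'}y^{i+1}$, so $y^{i+1}=\tau^{-1}\cdot\tau y^{i+1}$ itself appears). Finally $(a')^2=0$ becomes $a^2\in K_6^M(k)/p=0$, the relations $K_+^M(k)a'=0$ become $K_+^M(k)a\subseteq K_{\ge 4}^M(k)/p=0$ — both automatic since $K_*^M(k)/p=0$ for $*>3$ — and, since $y^p=\sum_i b_it(i)$ in $CH^*(X)/p$ (Theorem 2.3) with all $b_i=0$ in $H_{et}^{*}(X;\bZ/p)$, one gets $y^p=0$. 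Thus $H_{et}^{*}(X;\bZ/p)$ is, on associated graded, the $K_*^M(k)/p$-algebra on $y$ modulo $y^p=0$ together with $S(t)/(b)$, i.e. $K_*^M(k)/p\otimes\bZ[y]/(y^p)\otimes S(t)/(b)$.

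The step I expect to be the only real subtlety is the multiplicative one, and it is the reason for the "$\mathit{gr}$'' in the statement: inverting $\tau$ pins down $H_{et}^{*}(X;\bZ/p)$ as a $K_*^M(k)/p$-module on the nose, but the relation governing $y^p$ — and any possible $S(t)^+$-corrections, just as in Toda's description of $grH^*(G/T;\bZ/p)$ in the untwisted case — is only controlled up to the filtration making $grH_{et}^{*}$ the stated tensor product. I would therefore first establish the additive identification independently, by combining the decomposition $M(X)\cong R_2\otimes(\oplus_s\bT^{\otimes s})$ with Corollary 3.4 to get $H_{et}^{*}(X;\bZ/p)\cong H_{et}^{*}(R_2;\bZ/p)\otimes S(t)/(b)\cong K_*^M(k)[t]/(p,t^p)\otimes S(t)/(b)$, and then invoke Theorem 3.12 only to fix the ring structure on associated graded, where (as above) the single nontrivial input is the vanishing of all $b_i$ and of $K_+^M(k)a$ — precisely what the hypothesis on $K_*^M(k)/p$ provides.
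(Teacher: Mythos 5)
Your proof is correct and matches the paper's (implicit) approach: the paper derives Corollary 3.13 from Theorem 3.12 in exactly the way you describe, by inverting $\tau$ so that each $b_i$ (being $\tau$-torsion or already zero) dies, $a'$ is replaced by $\tau^{-1}a$, and $y^i=\tau^{-(i)}(\tau y)^i$ becomes an honest class, mirroring the derivation of Corollary 3.4 from Theorem 3.3. Your closing remark correctly identifies why only the associated graded is claimed, namely the filtration ambiguity in $y^p=\sum b_it(i)$ coming from Theorem 2.2.
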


  {\bf Remark.}
   As examples of the fields  satisfying the assumption of 
Theorem 3.3,
   we can take the high dimensional local fields defined by Kato and 
   Parsin. 
      Let $k$ be a complete discrete valuation field  with residue field $F$.
   Then it is well known that
   \[K_r^M(k)/p\cong K_r^M(F)/p\oplus K_{r-1}^M(F)/p.\] 
      Let $k_0$ be a finite field, and let $k_1,...,k_n$ be the sequence 
   of complete discrete valuation fields such that the residue field of 
   $k_i$ is $k_{i-1}$ for each $1\le i\le n$.  Then the field $k=k_n$ obtained 
   in this way is called an $n$-dimensional local field
   (see [Ka]). Then
       \[K_n^M(k_n)/p\cong \bZ/p \quad  and \quad K_m^M(k_n)/p=0\quad 
       for\ m>n.\]

  \section{$Ah^{*,*'}(Y)=ABP^{*,*'}(Y;\bZ/p)$} 

In this section, we study the algebraic cobordism
for the field $k$ with  $K_{n+2}^M(k)/p=0$.
Moreover we assume that $k$ contains a $p^2$-th root of the unity.
Throughout this section,
we assume the above $k,p$, that is,
\[ K_{n+2}^M(k)/p=0,\quad Q_0(\tau)=\zeta_p =0\in K_1^M(k)/p.\].

Let $Y$ be a smooth algebraic variety over $k$.
 We study the $BP$ version
of the algebraic cobordism
\[ ABP^{*,*'}(Y)=MGL^{*,*'}(Y)\otimes _{MU^*}BP^{*},
\]
\[ (\ MGL^{*,*'}(Y)_{(p)}\cong ABP^{*,*'}(Y)\otimes_{BP^*}MU_{(p)}^*,\ )\]
\[\Omega^*(Y)=MGL^{2*,*}(Y)\otimes _{MU^*}BP^*\]
 defined by Voevodsky and Levine-Morel
([Vo1], [Le-Mo1,2], [Ya1,4]), with the coefficient ring
$BP^*=\bZ_{(p)}[v_1,v_2,...]$,
where $|v_n|=-2(p^n-1)$. 

The Chow ring $CH^*(Y)$ is written as a quotient of $\Omega^*(Y)$
 ([Vo1],[Le-Mo1,2]) that is,
\[ CH^*(Y)\cong \Omega^*(Y)\otimes _{BP^*}\bZ_{(p)}
\cong \Omega^*(Y)/(BP^{<0}).\]
   
   Let us write by $ABP^{*,*'}(Y;\bZ/p)$ the mod $p$
$ABP$-theory so that
\[\to  ABP^{*,*'}(Y)\stackrel{\times p}{\to}
ABP^{*,*'}(Y)\to ABP^{*,*'}(Y;\bZ/p)\to...\]
and $ABP^{2*,*}(Y;\bZ/p)\cong \Omega^{2*}(Y)/p$.
We simply write 
\[ Ah^{*,*'}(Y)=ABP^{*,*'}(Y;\bZ/p).\]
\begin{lemma}
If $n+1\le 2p-1$, then $Ah^{*,*'}(pt.)\cong H^{*,*'}\otimes BP^{*}$
where $pt.=Spec(k)$ and $H^{*,*'}=H^{*,*'}(pt.;\bZ/p)$.
\end{lemma}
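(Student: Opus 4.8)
The plan is to compute $Ah^{*,*'}(pt.)$ directly from the Atiyah--Hirzebruch type spectral sequence relating mod $p$ motivic cohomology to $ABP$-theory, using the hypothesis $K_{n+2}^M(k)/p=0$ to control the $E_2$-page. First I would recall that for a point $Spec(k)$ over the field $k$ in question, one has $H^{*,*'}(pt.;\bZ/p)\cong K_*^M(k)/p[\tau]$, and the differentials in the slice (or motivic Atiyah--Hirzebruch) spectral sequence
\[ E_2^{*,*',*''}=H^{*,*'}(pt.;\bZ/p)\otimes BP^* \Longrightarrow Ah^{*,*'}(pt.)\]
are generated by the Milnor operations $Q_i$, with $Q_i$ raising total degree in the way recorded in $\S 3$ and carrying the coefficient $v_i$. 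Concretely the first possibly nonzero differential is $d=\sum_i v_i Q_i$ (up to the usual reindexing), so the key point is to show all these differentials vanish on $H^{*,*'}(pt.;\bZ/p)\otimes BP^*$.

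The heart of the argument is the vanishing of $Q_i$ on $K_*^M(k)/p[\tau]$ in the relevant range. I would argue as follows. On Milnor $K$-theory classes $Q_i$ has $f.deg=2p^i-1$, so $Q_i$ sends $K_j^M(k)/p$-classes (which sit in bidegree $(j,j)$) into $H^{2p^i-1+j,\,p^i+j}$; since $2p^i-1+j>p^i+j$ for $i\ge 1$, the target is detected by $\tau^{-1}$-inverted data only through $K^M$, and in fact $Q_i$ of a weight-$j$ symbol lands in something measured by $K_{j+1}^M(k)/p$ after dividing by $\tau$-powers. Under $n+1\le 2p-1$ together with $K_{n+2}^M(k)/p=0$, the only operations that can act nontrivially are $Q_0,\dots,Q_{n}$, and for a symbol of weight $j$ the operation $Q_i$ feeds into $K^M$-degree $\le n+2$, which vanishes whenever the output degree exceeds $n+1$; the remaining case is $Q_0$, which acts as the Bockstein and is zero on $K_*^M(k)/p[\tau]$ precisely because $Q_0(\tau)=\zeta_p=0$ by the standing assumption and $Q_0$ kills $K_*^M(k)/p$ itself. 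Hence every $Q_i$, $i\ge 0$, annihilates $H^{*,*'}(pt.;\bZ/p)$, so $d=\sum v_iQ_i=0$ on the whole $E_2$-page, the spectral sequence degenerates, and $Ah^{*,*'}(pt.)\cong H^{*,*'}\otimes BP^*$ additively; the multiplicative identification follows since the edge map is a ring map and both sides are free $H^{*,*'}$-modules on the same $BP^*$-basis.

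Alternatively, and perhaps more cleanly, I would run the Bockstein/long exact sequence
\[ \to ABP^{*,*'}(pt.)\xrightarrow{\times p} ABP^{*,*'}(pt.)\to Ah^{*,*'}(pt.)\to \]
together with the known computation of $ABP^{*,*'}(pt.)=\Omega^{*}$-type data for such restricted fields from [Ya1,4]: under $K_{n+2}^M(k)/p=0$ the integral $ABP$-theory of the point is $p$-torsion-free in the relevant bidegrees, so $Ah^{*,*'}(pt.)\cong ABP^{*,*'}(pt.)/p$, and one identifies the latter with $H^{*,*'}\otimes BP^*$ by comparing with the motivic cohomology computation. The main obstacle is the bookkeeping of bidegrees: one must check that the constraint $n+1\le 2p-1$ is exactly what prevents any higher differential $d_r$ (built from iterated $Q_i$'s and products of $v_j$'s) from connecting a surviving class in $K_j^M(k)/p$ ($j\le n+1$) to a nonzero target, and that no class of the form $\tau^a v^I$ with $a\ge 1$ supports a differential either — this last point again reducing to $Q_i(\tau)=0$ for all $i$, which holds since $Q_0(\tau)=0$ and $Q_i(\tau)=0$ for degree reasons when $i\ge 1$ and $K_{>n+1}^M(k)/p=0$.
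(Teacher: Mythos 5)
Your overall strategy is the same as the paper's: run the Atiyah--Hirzebruch spectral sequence $E_2 = H^{*,*'}(pt.;\bZ/p)\otimes BP^*\Rightarrow Ah^{*,*'}(pt.)$ and show all differentials $d_{2p^r-1}(x)=v_r\otimes Q_r(x)$ vanish. The paper's argument, however, is cleaner on one step and you have a real gap on another.

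On the degree bookkeeping: the clean point (which your middle paragraph circles around but does not quite state) is simply that $Q_r$ raises the first grading by $2p^r-1\ge 2p-1\ge n+1$, while $H^{*,*'}(pt.;\bZ/p)$ is supported in first degree $\le n+1$. Hence any class of first degree $\ge 1$ is killed for degree reasons, and the only possible sources of nonzero differentials are the powers $\tau^i\in H^{0,i}$. Your phrase about the image being ``detected by $\tau^{-1}$-inverted data only through $K^M$'' does not really say this, and in fact the target group $H^{2p^r-1+j,\,\cdot}$ already vanishes outright for $j\ge 1$ because its first degree exceeds $n+1$; no $\tau$-inversion is involved.

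The genuine gap is at $p=2$. Once one has reduced to $\tau$-powers, you argue ``$Q_i(\tau)=0$ for all $i$'' and conclude $Q_i(\tau^j)=0$. This implicitly uses that $Q_i$ is a derivation, which is true for $p$ odd but fails for $p=2$: the coproduct has the extra term $\rho\, Q_0\otimes Q_0$ (Voevodsky's modified Cartan formula), so in general $Q_1(\tau^2)\neq 2\tau\,Q_1(\tau)$. The paper handles this by quoting the explicit formula $Q_1(\tau^2)=\rho^3$ from [Ya1] and then invoking the standing hypothesis $\zeta_{p^2}\in k$, i.e.\ $\rho=0\in K_1^M(k)/2$, to kill it. Your proof never engages with the $\rho$-correction term and therefore does not actually establish $d_3=0$ at $p=2$; you need either to cite the formula $Q_1(\tau^2)=\rho^3$ and use $\rho=0$, or to note explicitly that $\rho=0$ makes $Q_1$ a derivation before concluding. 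Your alternative Bockstein argument is also circular: to know that $ABP^{*,*'}(pt.)$ is $p$-torsion-free in the relevant range one would already need a computation equivalent to the collapse of this spectral sequence.
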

\begin{proof}  Consider the Atiyah-Hirzebruch spectral sequence
\[ E_2^{*,*',*''}\cong H^{*,*'}(pt.;\bZ/p)\otimes
BP^{*''}\Longrightarrow Ah^{*,*'}(pt).\]
It is known the first (possible) non zero differential is
\[ d_{2p-1}(x)=v_1\otimes Q_1(x).\]

Here $H^{*,*'}=0$ for $*>n+1$ from the assumption for $k$.
Recall $H^{0,i}(pt.;\bZ/p)$ is generated by $\tau^i$.
When $p$ is odd, $Q_i$ is a derivation. and we see
$Q_1(\tau^i)=0$ since $Q_1(\tau)=0$. For $p=2$,
we have $Q_1(\tau^2)=\rho^3$ (Lemma 6.3 in [Ya1]).
Since $\rho=-1=0\in K_1^M(k)/2$, we see $d_3=0$.
Hence assumption of this lemma, this spectral sequence
collapses.
\end{proof}

Hereafter this section we assume that
$k$ is a field $k_*^M(k)/p\cong 0$ for $*>n+1$ and
$2p-1>n+1$ and $\zeta_{p^2}\in k$.
Therefore $h^{*,*'}(X)$ is an $H^{*,*'}\otimes BP^*$-algebra in this case.

{\bf Remark.} When $p=2$ and $k=\bR$, the algebra
$Ah^{*,*'}(Spec(\bR))$ is quite complicated
([Ya1]).

For smooth $Y_1,Y_2$ and $\theta\in Ah^{2*,*}(Y_1\times Y_2)$ for $*=dim(Y_2)$, we can define 
\[ f_{\theta}: Ah^{*,*'}(Y_1)\to h^{*,*'}(Y_2)\quad by\
f_{\theta}(x)=pr_{2*}(pr_1^*(x)\cup \theta) \]
where $pr_i : Y_1\times Y_2\to Y_i$ is the $i$-th projection.
For the projector $p$ (i.e., $p=f_{\theta}$ with $p\cdot p=p$),
we can define the motive $M=(Y,p)$ with
$Ah^{*,*'}(M)=p\cdot Ah^{*,*'}(Y)$.  Thus we have the category
$Ah$-motives.
\begin{lemma}
For smooth $Y$ and a motive $M\subset M(Y)$, we have 
the Atiyah-Hirzebruch spectral sequence
\[ E_2^{*,*',*''}\cong H^{*,*'}(M;\bZ/p)\otimes BP^*
\Longrightarrow Ah^{*,*'}(M).\]
\end{lemma}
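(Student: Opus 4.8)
The plan is to obtain the spectral sequence for $M$ by splitting off an idempotent summand from the Atiyah--Hirzebruch (slice) spectral sequence of the ambient smooth variety $Y$. Recall that for any smooth $Y$ over $k$ one has
\[ E_2^{*,*',*''}\cong H^{*,*'}(Y;\bZ/p)\otimes BP^{*''}\Longrightarrow Ah^{*,*'}(Y),\]
coming from the slice filtration of $BP$ (equivalently, from the Hopkins--Morel identification of the slices of $MGL$, localized and reduced $mod\ p$); this is the spectral sequence already exploited in the proof of Lemma 4.1, now for a general smooth $Y$ in place of $Spec(k)$. The first point to record is naturality: for smooth $Y_1,Y_2$ and $\theta\in Ah^{2*,*}(Y_1\times Y_2)$ with $*=dim(Y_2)$, the operation $f_\theta=pr_{2*}(pr_1^*(-)\cup\theta)$ is a composite of a smooth pullback, a cup product with a fixed class, and a proper pushforward, each of which refines to a morphism of slice towers (the oriented theory $BP$ has pushforwards compatible with the slice filtration up to the expected Tate twist). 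Hence every $f_\theta$ induces a morphism of the associated Atiyah--Hirzebruch spectral sequences.

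Next I would apply this to the idempotent $p=f_\theta$ ($\theta\in Ah^{2*,*}(Y\times Y)$, $p\cdot p=p$) that defines $M=(Y,p)$. Since $p$ is then an idempotent endomorphism of the entire spectral sequence $E_r^{*,*',*''}(Y)$ and the differentials commute with $p$, we may split off its image: $E_r(M):=p\cdot E_r(Y)$ is simultaneously a sub- and a quotient spectral sequence of $E_r(Y)$, with $E_2(M)=p\cdot\big(H^{*,*'}(Y;\bZ/p)\otimes BP^{*}\big)$. I would then identify this $E_2$-term. The correspondence action of $\theta$ on $E_2$ factors through the reduction $\bar\theta$ of $\theta$ under $Ah^{2*,*}(Y\times Y)\twoheadrightarrow CH^{*}(Y\times Y)/p\hookrightarrow H^{2*,*}(Y\times Y;\bZ/p)$, and $\bar p=f_{\bar\theta}$ is precisely the idempotent correspondence in $\bD\bM$ through which $M$ is realized; moreover the correspondence action is $BP^{*}$-linear in the coefficients. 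Therefore
\[ E_2(M)\cong \big(\bar p\cdot H^{*,*'}(Y;\bZ/p)\big)\otimes BP^{*}\cong H^{*,*'}(M;\bZ/p)\otimes BP^{*},\]
using the definition $H^{*,*'}(M;\bZ/p)=\bar p\cdot H^{*,*'}(Y;\bZ/p)$.

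Finally I would check convergence. The spectral sequence for $Y$ is strongly convergent: under the running hypotheses of $\S 4$ ($K_*^M(k)/p=0$ for $*>n+1$ and $2p-1>n+1$) the coefficient ring $H^{*,*'}(pt.;\bZ/p)$ is bounded, so the usual finite-dimensionality and connectivity estimates (as in [Vo1], [Le-Mo1,2], [Ya1,4]) make the slice filtration on $Ah^{*,*'}(Y)$ exhaustive, Hausdorff, and finite in each bidegree. A direct summand of a strongly convergent spectral sequence is again strongly convergent, with abutment the corresponding summand; and since the slice tower of $M$ is the $p$-summand of that of $Y$, the abutment of $E_r(M)$ is $p\cdot Ah^{*,*'}(Y)=Ah^{*,*'}(M)$. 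This produces the asserted spectral sequence.

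The step I expect to be the main obstacle is verifying the naturality of the slice/Atiyah--Hirzebruch spectral sequence under the operations $f_\theta$ — especially that the proper pushforward $pr_{2*}$ respects the slice filtration with the correct Tate twist, so that $p$ acts as a genuine morphism of spectral sequences and not merely on the abutment — together with the bookkeeping compatibility between the $Ah$-level projector $p$ defining the $Ah$-motive and its reduction $\bar p$ in $\bD\bM$ used to compute $H^{*,*'}(M;\bZ/p)$. In the cases of interest this last point is harmless, since the Rost-motive projectors are defined by explicit algebraic cycles and hence lift coherently along $Ah^{2*,*}(Y\times Y)\twoheadrightarrow CH^{*}(Y\times Y)/p$.
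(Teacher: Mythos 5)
Your proposal is correct and follows essentially the same strategy as the paper: show that $f_\theta$ commutes with the differentials $d_r$ and then split off the idempotent summand. The one step you flag as the ``main obstacle'' --- that $pr_{2*}$ respects the spectral sequence --- is precisely what the paper supplies concretely, by writing $pr_{2*}(x)=i^*(Th_Y(1)\cdot x)$ for a Thom class $Th_Y(1)$ and observing that $w(Th_Y(1))=0$ (and likewise $w(\theta)=0$) forces $d_r$ to vanish on those factors, so the cup and pushforward commute with $d_r$.
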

\begin{proof}
(See Lemma 7.1 in [Ya4].)
We only need to see that $d_rf_{\theta}=f_{\theta}d_r$
for the differential $d_r$ in the spectral sequence.
For $\theta\in CH^*(Y\times Y)/p\subset Ah^{2*,*}(Y\times Y)$, we have
\[ f_{\theta}(d_r(x))=pr_{2*}(pr_1^*(d_r(x)\cdot \theta)
=pr_{2*}(d_r(pr^*_1(x)\cdot \theta)).\]
The last equation follows $d_r(\theta)=0$, since $w(\theta)=0$.

We know (e.g. $\S5$ in [Ya])
\[ pr_{2*}(x)=i^*(Th_Y(1)\cdot x)\]
where $Th_Y(1)\in H^{2m,m}(Th_Y(V);\bZ/p)$
is the Thom class for some
bundle $V$ over $Y$ and $i:\bT^m\times Y\subset Th_Y(V)\times Y$.  Since $w(Th_Y(1))=0$,
we see
$ d_r(Th_Y(1)\cdot x)=Th_Y(1)\cdot d_r( x).$
Therefore $pr_{2*}$ commutes with $d_r$.  
  Hence so does $f_{\theta}$. 
\end{proof}

For the Rost motive $R_n$, we will study the 
Atiyah-Hirzebruch spectral sequence
\[ (*)\quad E_2^{*,*',*''}\cong
H^{*,*'}(R_n:\bZ/p)\otimes BP^*\Longrightarrow Ah^{*,*'}(R_n).\]
From Theorem 3.3, we still have
\[ H^{*,*'}(R_n;\bZ/p)\cong
          H^{*,*'}\oplus 
             (\tilde  Q(n-1)\{a'\}\oplus \tilde H^{*,*'}\{t\})
\{1,t,...,t^{p-2}\}\]
where  $\tilde Q(n-1)=Q(n-1)-\bZ/p\{Q_0...Q_{n-1}\}$.
          \begin{lemma}  The nonzero differential $d_r$ in AHss $ (*)$
has the form
\[ d_{2p^i-1}(x)=v_i\otimes Q_i(x)\quad mod (I_i)\quad with\ I_i=(v_1,...,v_{i-1})).\]
\end{lemma}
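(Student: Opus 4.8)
The plan is to determine the differentials in the Atiyah--Hirzebruch spectral sequence
\[(*)\quad E_2^{*,*',*''}\cong H^{*,*'}(R_n;\bZ/p)\otimes BP^*\Longrightarrow Ah^{*,*'}(R_n)\]
by comparing with the analogous spectral sequence for the Cech simplex $\chi$ and its truncations $M^i$, and then using the distinguished triangles (3.3), (3.4) together with the naturality of the AHss established in Lemma 4.3. First I would recall the general shape of the first possible nonzero differential in the Atiyah--Hirzebruch spectral sequence for $ABP$: after inverting nothing, the classically known leading term is $d_{2p^i-1}(x)=v_i\otimes Q_i(x)$ modulo the ideal $I_i=(v_1,\dots,v_{i-1})$, provided all earlier differentials $d_{2p^j-1}$ for $j<i$ vanish on the relevant classes (this is the algebraic analogue of the topological fact, see [Ya1,4] and the reference to Lemma 7.1 in [Ya4]). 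So the content of the lemma is really a \emph{vanishing} statement: one must show that no \emph{other} differentials can occur, i.e. that the first nonvanishing differential hitting a given class $x$ is exactly of the stated Milnor form, with no correction terms of lower filtration in $BP^*$ and no exotic differentials.

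The key steps are as follows. Step one: by Lemma 4.2 and the assumption $K_{n+2}^M(k)/p=0$ (hence $H^{*,*'}=0$ for $*>n+1$) together with $2p-1>n+1$ and $Q_0(\tau)=0$, the spectral sequence for the point collapses, so $Ah^{*,*'}(pt.)\cong H^{*,*'}\otimes BP^*$; this pins down the coefficients and guarantees that $d_r$ is $H^{*,*'}\otimes BP^*$-linear. Step two: analyze the AHss for $\chi_a$. Here $H^{*,*'}(\tilde\chi_a;\bZ/p)$ is a free $K_*^M(k)\otimes Q(n)$-module on $a'$ with polynomial generator $\xi_a=Q_nQ_{n-1}\cdots Q_0(a')$ by Theorem 3.1, and the $Ah$-theory of $\chi_a$ is known (it behaves like the point away from the Rost-motive part); comparing, the differentials on the classes $Q_J(a')$ must be forced by dimension/weight reasons. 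The crucial input is the constraint that any surviving class $z$ in $Ah^{*,*'}(R_n)$ must satisfy $0\le w(z)$ and $0\le d(z)\le\dim(V_a)=p^n-1$, exactly as used in Lemmas 3.5--3.9; a putative nonzero permanent cycle violating these bounds forces the class to be killed, and the only differential available with the right bidegree and $v$-filtration is $d_{2p^i-1}(x)=v_i\otimes Q_i(x)$ mod $I_i$. Step three: push this through the triangles (3.3) and (3.4) inductively on $i$ (the index of $M^i$), using Lemma 4.3 so that the AHss for $M^i$ maps compatibly to those for $\chi$ and $M^{i-1}$; the long exact sequences in $Ah$-theory then determine the differentials for $M^i$ from those for $\chi$ and $M^{i-1}$. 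Step four: at $i=p-1$ conclude for $R_n=M^{p-1}$, reading off that $d_{2p^i-1}$ acts as $v_i\otimes Q_i$ modulo $(v_1,\dots,v_{i-1})$ on the summands $\tilde Q(n-1)\{a'\}$ and $\tilde H^{*,*'}\{t\}$ appearing in Theorem 3.3, these being the only classes on which a nonzero differential can land without contradicting the $d$- and $w$-bounds.

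The main obstacle I expect is \textbf{ruling out correction terms and exotic differentials}: a priori $d_{2p^i-1}(x)$ could equal $v_i\otimes Q_i(x)$ plus terms in $(v_1,\dots,v_{i-1})\otimes H^{*,*'}(R_n;\bZ/p)$, and one might fear higher differentials $d_r$ with $r\ne 2p^j-1$ or with leading coefficient not a single $v_i$. The statement of the lemma sidesteps the correction issue by only claiming the formula ``$\mathrm{mod}\ (v_1,\dots,v_{i-1})$'', so what genuinely must be argued is that (a) the lowest-$v$-filtration part of the differential is precisely $v_i\otimes Q_i$, which follows from the universal/topological model of the $BP$ Atiyah--Hirzebruch differentials restricted along the realization-type comparison, and (b) there is no nonzero differential whatsoever supported on the $BP^*$-torsion-free part that would obstruct the pattern --- and this is handled by the collapse at the point (Lemma 4.2) plus linearity, which confine all differentials to the $\tilde Q(n-1)\{a'\}$ and $\tilde H^{*,*'}\{t\}$ summands. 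The bookkeeping of which $Q_i$ survives on which $t^j$-stratum (the ``vertical element'' phenomenon from the Remark after Corollary 3.2, and Lemma 3.10 giving $pt^i=Q_{n-1}\cdots Q_1(a't^{i-1})$) is the other delicate point, but it is dictated by the already-computed additive structure of $H^{*,*'}(R_n;\bZ/p)$ in Theorem 3.3, so it is routine once the shape of $d_{2p^i-1}$ is established.
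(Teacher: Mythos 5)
Your proposal is quite high-level and, on the two delicate points, does not actually supply the argument the paper needs; it also proposes a route (induction on $i$ through the triangles (3.3), (3.4)) that the paper does \emph{not} take for this lemma. The paper's proof of Lemma 4.3 is a direct analysis inside the single spectral sequence $(*)$ for $R_n$, and it splits into three concrete cases. First, for $x=Q_i\cdots Q_1Q_0(a')$ it uses a pure dimension count: any $d_r(x)=v\otimes x'$ with $|v_i|>|v|>|v_{i+1}|$ would need a nonzero class $x'$ with $|x|<|x'|<|x|+|Q_{i+1}|$, and there is none. Second, for $bt^j$ with $b\in K_1^M(k)/p$ it observes $w(bt^j)=1$, so any nonzero $d_r(bt^j)$ must have $w=0$, hence lie in the Chow-group summands $\oplus BP^*/I_i\{c_it^k\}$; a nonzero such image would contradict the existence of the isomorphism $gr(Res)$ constructed in the proof of Theorem 4.7. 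Third, for $\tau t^{i+1}$ it uses the restriction to $Ah^{*,*'}(\bar R_n)$ and the fact that $Ah^{*,*'}(\bar R_n)$ is $BP^*/p[\tau]$-free to deduce directly that $\tau t^{i+1}$ exists and is a permanent cycle.

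The genuine gap in your proposal is that you never explain how to rule out differentials on the $\tilde H^{*,*'}\{t^j\}$ summands; you only gesture at ``weight and dimension bounds'' and ``linearity''. But the weight bound only forces a putative nonzero differential to land in the Chow summands, it does not kill it; to finish, the paper must invoke the restriction map $Res$ to $\bar R_n$ (via $gr(Res)$ in Theorem 4.7's proof for the $bt^j$ case, and directly for the $\tau t^{i+1}$ case). Your write-up omits this comparison with the split form $\bar R_n$ entirely, and without it the argument is incomplete. A secondary issue: your Step three invokes ``Lemma 4.3'' to run the induction, but Lemma 4.3 is exactly what is being proved, so as written that step is circular (you presumably meant Lemma 4.2, which only gives functoriality of the AHss for correspondences, not compatibility with the boundary map $s_i$ in a triangle). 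Finally, the inductive-through-$M^i$ strategy is essentially what Section 5 does for $Ak^{*,*'}$ under the extra Assumption 5.2; the paper deliberately avoids it for the $Ah$-version precisely because the direct argument inside $(*)$ for $R_n$ needs fewer hypotheses.
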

The proof of this lemma will give just before Corollary 4.8 below. 
\begin{lemma}
Let $c_i=Q_0...\hat Q_i...Q_{n-1}(a')$ and 
\[ A=\oplus BP^*/I_n\{c_0\}\oplus \bigoplus
_{i=1}^{n-1}BP^*/I_{i}\{c_i\}. \]
Then the infinity term  in the AHss $(*)$ is written as
\[ E_{\infty}^{*,*',*''}\cong
          H^{*,*'}\otimes BP^*\oplus 
             (A\oplus \tilde H^{*,*'}\otimes BP^*/p\{t\})
\{1,t,...,t^{p-2}\}.\]
  \end{lemma}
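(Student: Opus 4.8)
The plan is to run the Atiyah--Hirzebruch spectral sequence $(*)$ whose $E_2$-page is the explicit module $H^{*,*'}(R_n;\bZ/p)\otimes BP^*$ given by Theorem 3.3, using the differential formula of Lemma 4.3, and to track what survives. The summand $H^{*,*'}\otimes BP^*$ consists of permanent cycles: it is the image of $Ah^{*,*'}(pt.)\cong H^{*,*'}\otimes BP^*$ (Lemma 4.1) under the unit map $pt.\to R_n$, so no differential can hit it or emanate from it. Likewise each $\tilde H^{*,*'}\{t^j\}$ is $Q_i$-annihilated: the Milnor operations are derivations, $Q_i(\tau)=0$ for $p$ odd ($Q_1(\tau^2)=\rho^3=0$ for $p=2$ since $\rho=0\in K_1^M(k)/2$), and $t^j$ is a vertical class, so $Q_i(\tilde H^{*,*'}t^j)=0$ and these classes are permanent cycles as well; they contribute the $\tilde H^{*,*'}\otimes BP^*/p\{t^j\}$ terms (the $/p$ because $t^j$ itself is not in $Ah^{*,*'}$, only $\tilde H^{*,*'}t^j$ is, exactly as for $H^{*,*'}(R_n;\bZ/p)$).

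The heart of the computation is the behaviour of $\tilde Q(n-1)\{a'\}\{t^j\}$. Write a $\bZ/p$-basis of $\tilde Q(n-1)\{a'\}$ as $\{Q_J(a')\}$ for $Q_J=Q_{j_1}\cdots Q_{j_r}$ a product of distinct $Q_i$'s with $0\le j_s\le n-1$, excluding the top class $Q_0\cdots Q_{n-1}(a')$. By Lemma 4.3, $d_{2p^i-1}(x)=v_i\otimes Q_i(x)$ modulo $I_i=(v_1,\dots,v_{i-1})$. Running these differentials in order $i=1,2,\dots$ (each $d_{2p^i-1}$ acting modulo the previously-killed lower $v$'s), one gets for each $j$ a Koszul-type complex on the exterior algebra $Q(n-1)\{a't^j\}$ against the regular sequence $(v_0=p, v_1,\dots,v_{n-1})$ over $BP^*$. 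Here $d_{2p^0-1}=d_1$ plays the role of multiplication by $p=v_0$ (the class $a't^j$ is $p$-torsion only up to the relation $Q_0(a')t^j=$ something, i.e.\ $c$-classes appear), so the surviving homology in each internal $t^j$-block is precisely $\bigoplus_{i=1}^{n-1}BP^*/I_i\{c_i\}\oplus BP^*/I_n\{c_0\}$, where $c_i=Q_0\cdots\hat Q_i\cdots Q_{n-1}(a')$ is the unique basis element of $\tilde Q(n-1)\{a'\}$ killed ``last'' by $Q_i$ (and $c_0$ survives only to $BP^*/I_n$, i.e.\ after all of $v_1,\dots,v_{n-1}$ are quotiented, killed by $v_0=p$ being absent from $I_n$... more precisely $c_0=Q_1\cdots Q_{n-1}(a')$ is hit by nothing and only its $v_0\cdots v_{n-1}$-multiples die). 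This is exactly the module $A$, and the block structure over $\{1,t,\dots,t^{p-2}\}$ is preserved since all differentials respect the $t$-filtration (they are $Ah^{*}(pt.)$-linear and $t$ is a motivic class with $w(t)=0$).

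So the only real work is to verify that no further differentials and no nontrivial multiplicative extensions mix these three families. For this one uses the degree constraints $0\le w(x)$ and $0\le d(x)\le\dim V_a=p^n-1$ together with bidegree bookkeeping: the classes $c_i$ sit in $D_0\oplus\cdots\oplus D_{p-2}$ (in the $D_\ell$-filtration of the Remark after Corollary 3.4), the surviving $v$-multiples stay within the allowed triangular region, and a putative later differential out of a $c_i t^j$ would have to land in a slot that the $E_2$-computation shows is empty. I expect the main obstacle to be precisely the careful identification of which basis element of the exterior algebra $\tilde Q(n-1)\{a'\}$ survives to which $BP^*/I_i$ — i.e.\ matching the Koszul homology of $(v_0,\dots,v_{n-1})$ acting via $(Q_0,\dots,Q_{n-1})$ on $\Lambda(Q_0,\dots,Q_{n-1})\{a'\}$ with the claimed answer $A$, and checking the edge case of $c_0$ (the class built from $Q_1\cdots Q_{n-1}$, which is never a target and whose entire $BP^*$-free part survives modulo $I_n$ only). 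Everything else — permanence of $H^{*,*'}\otimes BP^*$ and of the $\tilde H^{*,*'}t^j$-classes, and the absence of extensions — follows from Lemmas 4.1--4.3 and the standard weight inequalities.
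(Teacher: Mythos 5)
Your broad plan agrees with the paper's: the summand $H^{*,*'}\otimes BP^*$ and the classes in $\tilde H^{*,*'}\{t^j\}$ are permanent cycles for the reasons you give, and the whole burden falls on computing the iterated homology of $\tilde Q(n-1)\{a't^j\}$ under the differentials $d_{2p^i-1}=v_i\otimes Q_i$. However, that computation is exactly what you do not carry out. You describe the problem (``matching the Koszul homology \dots with the claimed answer $A$, and checking the edge case of $c_0$'') and then declare that you ``expect'' it to come out right. The paper's entire proof consists of doing this: it decomposes $\tilde Q(n-1)$ as $Q(n-1)'\oplus\bigoplus_{i=1}^{n-1}Q(i-1)'Q_{n-1}\cdots Q_{i+1}Q_0$ (splitting first on $Q_0$, then on $Q_{n-1},Q_{n-2},\dots$ in turn), computes $E_j(BP^*\otimes Q(i)')$ by an explicit induction on $j$, and reads off that the $i$-th summand contributes $BP^*/I_i\{c_i\}$. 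A proof of Lemma 4.4 must actually supply this (or some substitute for it), since nothing about the bare statement ``Koszul complex for a regular sequence'' tells you which monomial $Q_J(a')$ ends up carrying which quotient $BP^*/I_i$.

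Two of your side remarks would in fact lead you astray if pressed. First, there is no $v_0=p$ differential in the AHSS for $ABP(-;\bZ/p)$ — the differentials run $d_{2p^i-1}=v_i\otimes Q_i$ for $i\ge 1$ only, and $Q_0$ never appears as a differential. The reason $p$ lands in all the $I_i$'s is simply that the entire $E_2$-page $H^{*,*'}(R_n;\bZ/p)\otimes BP^*$ is already a $\bZ/p$-module; it is not because ``$d_1$ plays the role of multiplication by $p=v_0$.'' The element $Q_0$ enters only as an inert label on the monomials of $\tilde Q(n-1)$, and the subtlety is precisely that the $Q_0$-free piece $\Lambda(Q_1,\dots,Q_{n-1})\{a'\}$ is a full exterior algebra (yielding $BP^*/I_n\{c_0\}$) while the $Q_0$-containing piece $Q_0\cdot\tilde\Lambda(Q_1,\dots,Q_{n-1})\{a'\}$ is a truncated one — and the truncation (that $Q_0\cdots Q_{n-1}(a')$ is absent, because $\delta=0$ in $H^{*,*'}(R_n;\bZ/p)$) is what produces the ladder of $BP^*/I_i\{c_i\}$ for $1\le i\le n-1$. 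Second, your parenthetical ``$c_0=Q_1\cdots Q_{n-1}(a')$ is hit by nothing'' is false: $v_i c_0$ is hit by $d_{2p^i-1}(Q_1\cdots\hat Q_i\cdots Q_{n-1}(a'))$ for every $1\le i\le n-1$, which is exactly why $c_0$ survives only modulo $I_n$. You would need to get these details right — and crucially, carry out the induction that identifies the surviving pieces — to have a proof.

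\end{document}
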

\begin{proof}
Let us write $Q(i)'=\Lambda(Q_i,...,Q_1)$.
For $x\in\tilde Q(n-1)$, considering that $Q_0$ is contained in $x$ or not,
we have the decomposition
\[\tilde Q(n-1)\cong Q(n-1)'\oplus \tilde Q(n-1)'Q_0.\] 
 Considering $Q_{n-1}$ is contained in $x$ or not  
for $x\in \tilde Q(n-1)'$, we have
\[  \tilde Q(n-1)'Q_0\cong Q(n-2)'Q_0\oplus \tilde Q(n-2)'Q_{n-1}Q_0.\]
Continuing this argument, we can see that
\[ \tilde Q(n-1)\cong Q(n-1)'\oplus \bigoplus_{i=1}^{n-1}
Q(i-1)'Q_{n-1}...Q_{i+1}Q_0 \]

For a $BP^*\otimes Q(n-1)'$-module $B$, let us write
$E_0(B)=B$ and $E_{r+1}(B)=H(E_r(B);v_r\otimes Q_r)$.
Then we easily see by induction on $j$
\[ E_j(BP^*\otimes Q(i)')\cong 
\begin{cases} BP^*/I_{j+1}\otimes\Lambda(Q_i,...,Q_{j+1})\{Q_j...Q_1\}
\quad j<i \\
   BP^*/I_{i+1}\{Q_i...Q_1\}\quad otherwise.
\end{cases}
\]
Hence we see that 
\[E_{n-1}(BP^*\otimes Q(i-1)'Q_{n-1}...Q_{i+1}Q_0(a'))\]
\[ \cong BP^*/I_{i}\{Q_{n-1}...Q_{i+1}Q_{i-1}...Q_1Q_0(a')\}\]
\[\cong BP^*/I_{i}\{
Q_{n-1}....\hat Q_{i}...Q_0(a')\}\cong BP^*/I_{i}\{c_{i}\}.\]

Therefore we get 
\[ E_{n-1}(BP^*\otimes \tilde Q(n-1)'(a'))
\cong A=\oplus BP^*/I_n\{c_0\}\oplus \bigoplus
_{i=1}^{n-1}BP^*/I_{i}\{c_i\}. \]

From Lemma 4.3, all differential are of the forms $
d_{2p^i-1}=v_i\otimes Q_i$ (in particular $d_r(BP^*\otimes \tilde H^{*,*'}y^j)=0$), we have this lemma.
\end{proof}

We recall the following lemma to see the relations
between $c_i$ in $Ah^{*,*'}(R_n)$.
\begin{lemma} (Corollary 3.4 in [Ya4])
Let $x\in CH^*(Y)\cong E_{\infty}^{2*,*,0}$ and $v_sx=0$ in $E_{\infty}^{2*,*,*'}$ for the
AHss converging to $ABP^{*,*'}(Y)$.  
Then there is $b\in H^{*,*'}(Y;\bZ/p)$ with
$Q_s(b)=x$  and  a relation in $ABP^{2*,*}(Y)$ 
\[ v_sx+v_{s+1}x_{s+1}+...+v_kx_k+...=0\quad mod(I_{\infty}^2)\]
with $x_k=Q_k(b)$ in $H^{2*,*}(Y;\bZ/p)$ for all $k\ge s$.
\end{lemma}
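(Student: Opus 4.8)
The plan is to reverse-engineer the asserted relation from the Atiyah-Hirzebruch spectral sequence of Lemma 4.2, using the standing hypotheses of this section: by Lemma 4.1 one has $Ah^{*,*'}(pt.)\cong H^{*,*'}\otimes BP^*$, so it is a spectral sequence of $BP^*$-modules whose only nonzero differentials are the $d_{2p^i-1}(z)=v_iQ_i(z)$ mod $I_i$ of Lemma 4.3.

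First I would produce the class $b$. Since $x\in CH^m(Y)\cong E_\infty^{2m,m,0}$ is a permanent cycle and each $d_{2p^i-1}$ is $BP^*$-linear (indeed $d_{2p^i-1}(v_sz)=v_sd_{2p^i-1}(z)$, as the $v_s$ are permanent cycles), the product $v_sx$ is again a permanent cycle; as it vanishes in $E_\infty$ by hypothesis it must be a boundary, $v_sx=d_r(b)$ on some page $r=2p^i-1$. A bidegree count then pins down $i=s$: the differential $d_{2p^i-1}$ lowers the $BP^*$-grading by $2(p^i-1)$, so the source $b$ sits in $BP^*$-degree $2(p^i-p^s)$, which must be $\le 0$ because $BP^{>0}=0$, forcing $i\le s$; and if $i<s$ the term $v_iQ_i(b)$ of $d_{2p^i-1}(b)$ carries a factor $v_i$ that $v_sx$ does not, whereas the remaining correction term lies in $I_iE_r$ and $v_sx\notin I_iE_r$, which is impossible either way. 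Hence $i=s$, the class $b\in H^{2m-2p^s+1,\,m-p^s}(Y;\bZ/p)$ is a cycle for all earlier differentials (so $Q_j(b)$ is killed for $j<s$), and $v_sQ_s(b)=d_{2p^s-1}(b)=v_sx$, that is $Q_s(b)=x$ in $H^{2m,m}(Y;\bZ/p)\cong CH^m(Y)/p$.

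Next I would assemble the integral relation. Lift $x$ to $\bar x\in ABP^{2m,m}(Y)=\Omega^m(Y)$, which is possible since $\Omega^*(Y)\to CH^*(Y)$ is surjective; then $v_s\bar x$ has leading term $v_sx=0$ in the associated graded, hence lies in strictly deeper filtration. Its successive associated graded pieces are detected by the remaining differentials $d_{2p^k-1}(b)\equiv v_kQ_k(b)\ (\mathrm{mod}\ I_k)$ for $k>s$, applied to successive modifications of $b$; reading these off in order and lifting each $x_k:=Q_k(b)\in CH^*(Y)/p$ back to $\Omega^*(Y)$ yields a relation
\[ v_sx+v_{s+1}x_{s+1}+\dots+v_kx_k+\dots=0 \qquad\text{in}\ \ ABP^{2*,*}(Y)=\Omega^*(Y), \]
valid modulo $I_\infty^2$. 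Working modulo $I_\infty^2$ kills all the ambiguities: changing the chosen lift of an $x_k$ alters $v_kx_k$ by $v_k\cdot(p,BP^{<0})\subseteq I_\infty^2$, the correction terms coming from the reduction mod $I_k$ in Lemma 4.3 only contribute products $v_iv_j$, and cross terms such as $v_s^2$ are likewise in $I_\infty^2$.

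The hard part will be this last step: turning the graded datum $d_{2p^k-1}(b)=v_kQ_k(b)$ mod $I_k$ into a single honest equation in the non-graded ring $\Omega^*(Y)$. I would make this precise by chasing $v_s\bar x$ down the Postnikov-type tower of the spectrum $ABP$ (equivalently the $I_\infty$-adic tower of quotient theories built from $ABP$), whose successive $k$-invariants are, by Lemma 4.3, exactly the operations $v_kQ_k$, and by using the integral Bockstein relating $b$ to $\bar x$ rather than its mod-$p$ reduction. This is precisely Corollary 3.4 of [Ya4], and the hypothesis $K_{n+2}^M(k)/p=0$ enters (through Lemma 4.1) just to guarantee that no stray differential out of $H^{*,*'}(pt.;\bZ/p)$ interferes with that identification of $k$-invariants.
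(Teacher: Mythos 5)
The paper does not actually prove this lemma; it is quoted verbatim as Corollary~3.4 of [Ya4], so there is no in-paper argument to compare your sketch against. Judged on its own terms, your overall strategy is the expected one: identify the page and source of the differential that kills $v_sx$ by a degree count, read off $b$ and $Q_s(b)=x$ from the formula $d_{2p^i-1}\equiv v_iQ_i$ mod $I_i$, and then lift the graded information to an honest relation in $\Omega^*(Y)$ modulo $I_\infty^2$. Some local repairs: the weight of $b$ is $m-p^s+1$, not $m-p^s$, since $Q_s$ raises weight by $p^s-1$; Lemma~4.3 of this paper is special to $R_n$, so the fact you actually need is the general identification of the first nonzero AHss differential for $ABP$ as $v_iQ_i$ mod $I_i$ (proved for arbitrary smooth $Y$ in [Ya1], [Ya4]); and the step from $v_sQ_s(b)\equiv v_sx \pmod{I_s}$ in $E_r$ to $Q_s(b)=x$ quietly uses that $(p,v_1,v_2,\dots)$ is a regular sequence so that $I_s:v_s=I_s$, which is worth saying. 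Also, the hypothesis $K_{n+2}^M(k)/p=0$ plays no role here: Corollary~3.4 of [Ya4] holds for general $k$, and Lemma~4.1 is only invoked elsewhere in the section.

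The real gap is exactly where you flag it. You correctly assemble, page by page, the graded statements $d_{2p^k-1}(b)\equiv v_kQ_k(b)\pmod{I_k}$, but the lemma asserts a single equation $v_sx+v_{s+1}x_{s+1}+\cdots=0\pmod{I_\infty^2}$ in the ungraded group $\Omega^*(Y)$, and as written there is no argument converting one into the other. Your phrase ``chasing $v_s\bar x$ down the Postnikov-type tower, using the integral Bockstein relating $b$ to $\bar x$'' names the right circle of ideas but carries out none of the induction: one has to track $v_s\bar x$ through the successive cofibre sequences of the $I_\infty$-adic (or $BP\langle m\rangle$) tower, identify each obstruction to lowering filtration with $v_kQ_k(b)$ by the $k$-invariant formula, and check that the resulting corrections telescope and that all ambiguities land in $I_\infty^2$. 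Until that chase is made explicit, what you have is a correct plan with the hardest step outsourced to the very statement being proved, and the clean way to close it is to actually cite [Ya4] for that final passage.
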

\begin{cor} In $ABP^{2*,*}(R_n)$, we have 
$v_ic_j=v_jc_i$ $mod(I_{\infty}^2)$. The restriction map
$Res(c_i)=v_iy\ mod(I_{\infty}^2)$ in $ABP^{2*,*}(\bar R_n)$.
\end{cor}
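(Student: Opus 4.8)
The plan is to deduce both assertions from Lemma 4.6 (Corollary 3.4 of [Ya4]), applied to $Y=R_n$, together with the explicit form $c_i=Q_0\cdots\hat Q_i\cdots Q_{n-1}(a')$ and the $E_\infty$-page of Lemma 4.4. Nothing beyond those inputs should be needed for the ring relation; the restriction formula needs one extra, more classical, ingredient.

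For the relation $v_ic_j\equiv v_jc_i$, by symmetry we may assume $0\le i<j\le n-1$. Then $v_ic_j=0$ in $E_\infty$: by Lemma 4.4 the class $c_j$ generates the summand $BP^*/I_j\{c_j\}$, and $v_i\in I_j=(v_1,\dots,v_{j-1})$ when $1\le i\le j-1$, while $v_0=p$ annihilates everything mod $p$. Put $b=Q_0\cdots\hat Q_i\cdots\hat Q_j\cdots Q_{n-1}(a')$, i.e. the full $Q$-product on $a'$ with $Q_i$ and $Q_j$ deleted, so that $Q_i(b)=\pm c_j$. Lemma 4.6 then produces a relation $v_ic_j+\sum_{k>i}v_k\,Q_k(b)\equiv 0\ \mathrm{mod}\ I_\infty^2$ in $ABP^{2*,*}(R_n)$, and the key point is that almost every term dies: for $i<k\le n-1$ with $k\ne j$ the operator $Q_k$ already occurs in $b$, so $Q_k(b)=0$ since $Q_k^2=0$; for $k\ge n$ the element $Q_k(b)$ does not lie in $H^{2*,*}(R_n;\bZ/p)$ for degree reasons ($d(Q_k(b))>\dim R_n=p^n-1$, or $w(Q_k(b))\ne 0$); and $Q_j(b)=\pm c_i$. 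Hence the relation collapses to $v_ic_j\equiv\pm v_jc_i\ \mathrm{mod}\ I_\infty^2$, which is the assertion; I expect the sign to come out $+$ once the signs from $Q_iQ_j=-Q_jQ_i$ are tracked, and in any case the paper suppresses signs throughout.

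For the restriction, over $\bar k$ the Rost motive splits as a sum of Tate motives, so the AHss for $\bar R_n$ collapses and $ABP^{2*,*}(\bar R_n)=\Omega^*(\bar R_n)\cong BP^*[y]/(y^p)$ with $y$ of codimension $b_n$. A chosen lift of $c_i$ to $\Omega^*(R_n)$ restricts to an element of $\Omega^{b_n-p^i+1}(\bar R_n)$; expanding it in the $BP^*$-basis $\{v^Jy^l\}$ and reducing mod $I_\infty^2$, a codimension count shows the only surviving monomial (up to scalar) is $v_iy$: a term $v^Jy^l$ with $l\ge 2$ would force $p^r=(l-1)b_n+p^i$, which is not a power of $p$; $l=0$ gives negative codimension; and decomposable $v^J$ lie in $I_\infty^2$. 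Thus $Res(c_i)\equiv\lambda_iv_iy\ \mathrm{mod}\ I_\infty^2$ for some $\lambda_i\in\bZ/p$, and it remains only to see that $\lambda_i$ is a unit, after which we normalise $\lambda_i=1$. For $i=0$ this is the classical identity $Res(c_0)=py$ (cf. [Me-Su], [Vo4]); the case $i\ge 1$ is handled similarly, using that by construction the $c_i$ are exactly the classes through which the differentials $d_{2p^i-1}=v_i\otimes Q_i$ of Lemma 4.3 are detected, so that $Res(c_i)$ must hit $v_iy$ nontrivially.

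The mechanical part — choosing $b$, killing the cross-terms, and reading off the two-term collapse — is routine once Lemma 4.6 is in hand. I expect the real obstacle to be the normalisation of $\lambda_i$ in the restriction formula: merely restricting the relation $v_ic_j\equiv v_jc_i$ is of no use, since over $\bar k$ both sides land in $I_\infty^2$, so one genuinely needs the $i=0$ identity as leverage (or the norm-variety input that the cobordism class $[\bar V_a]$ is a $v_n$-element with $y^{p-1}$ its fundamental class). Keeping track of the implicit $\tau$-powers relating the Chow-theoretic $c_i$ to the operation $Q_0\cdots\hat Q_i\cdots Q_{n-1}(a')$ is the other bookkeeping point where some care is required.
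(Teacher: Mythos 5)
Your argument for the ring relation is essentially the paper's, just written out once and for all rather than by reduction to a sample case: the paper establishes the $j=0$ instance explicitly, producing $pc_i\equiv v_ic_0\ \mathrm{mod}(I_\infty^2)$ by taking $x=Q_{n-1}\cdots\hat Q_i\cdots Q_1(a')$ with $Q_0(x)=c_i$, $Q_i(x)=c_0$, and then remarks that the general $v_jc_i$ case goes ``similarly.'' Your choice of $b$ and the killing of cross terms is exactly the paper's mechanism, so this half is fine.

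The restriction formula is where your write-up has a real gap, and in fact you misdiagnose the available leverage. You assert that ``merely restricting the relation $v_ic_j\equiv v_jc_i$ is of no use, since over $\bar k$ both sides land in $I_\infty^2$,'' but this objection applies only to $i,j\ge 1$. The paper's proof is precisely to restrict the $j=0$ case: from $pc_i\equiv v_ic_0\ \mathrm{mod}(I_\infty^2)$ and Lemma~3.11's $Res(c_0)=py\ \mathrm{mod}(p^2, BP^{<0})$ one gets $p\cdot Res(c_i)\equiv p\cdot v_iy$, and the overall factor of $p$ is stripped off using the $BP^*$-freeness of $ABP^{*,*'}(\bar R_n)$. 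Here $v_0=p$ does not place both sides inertly inside $I_\infty^2$; it produces a common $p$-divisibility that can be cancelled in a torsion-free module. Your codimension count correctly reduces the possibilities to $Res(c_i)\equiv\lambda_iv_iy$ for a scalar $\lambda_i$ (at least for $i\ge 1$), but your proposed normalisation --- ``the $c_i$ are exactly the classes through which the differentials $d_{2p^i-1}=v_i\otimes Q_i$ are detected, so $Res(c_i)$ must hit $v_iy$ nontrivially'' --- is not an argument: the differentials in the AHss over $k$ record the $BP^*/I_i$-module structure on $c_i$, which is a statement about annihilators, and say nothing a priori about the value of the restriction homomorphism. Since determining that scalar is the entire content of the second assertion, this step needs the paper's $j=0$ relation rather than a heuristic.
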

\begin{proof}  First note that
$c_i$ exists in (the integral) $ABP^{2*,*}(R_n)$, since $c_i$
exists in $H^{2*,*}(R_2)$ (considering the integral AHss).
Let $i>0$.  Since $pc_{i}=0$ in the spectral sequence
there is $x\in H^{*,*'}(R_2)$ such that
$Q_0x=c_i$ and there is a relation
\[ pc_i+v_1Q_1(x)+...+v_{i}Q_i(x)=0\quad mod(I_{\infty}^2).\]
By the dimensional reason, we see $x=Q_{n-1}...\hat Q_i..Q_1(a')$.  Hence the above equation is written as
\[ pc_i-v_ic_0=0\quad mod(I_{\infty}^2)\]
since $Q_ix=Q_1...Q_{n-1}(a')=c_0$.

 From lemma 3.11, we know $Res(c_0)=py$ $mod(p^2,BP^{<0})$.
So $Res(c_i)=v_iy$ since $ABP^{*,*'}(\bar R_n)$ is $BP^*$-free.  The first relation is proved similarly
by using $v_{j}c_i=0$ $j<i$ in the spectral sequence.
\end{proof}
\begin{thm}
We have an isomorphism 
\[ Ah^{*,*'}(R_n)\cong
          H^{*,*'}\otimes BP^*\oplus 
             (BP^*/I_n\{c_0\}\oplus 
I_{n}\{t\}\oplus BP^*\otimes \tilde H^{*,*'}\{t\})
\{1,t,...,t^{p-2}\}\]
\end{thm}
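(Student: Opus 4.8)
The plan is to read off $Ah^{*,*'}(R_n)$ from the Atiyah--Hirzebruch spectral sequence $(*)$ (whose existence is Lemma 4.2), whose $E_\infty$-term has already been computed in Lemma 4.4, and then to settle the remaining $BP^*$-module extensions with the help of the preceding Corollary. One begins from
\[ E_\infty^{*,*',*''}\cong H^{*,*'}\otimes BP^*\ \oplus\ \bigl(A\oplus\tilde H^{*,*'}\otimes BP^*/p\{t\}\bigr)\{1,t,\dots,t^{p-2}\},\quad A=BP^*/I_n\{c_0\}\oplus\bigoplus_{i=1}^{n-1}BP^*/I_i\{c_i\}. \]
Since $Ah^{*,*'}(R_n)=ABP^{*,*'}(R_n;\bZ/p)$ is a $\bZ/p$-module (for $p$ odd; the case $p=2$ goes the same way using $\zeta_4\in k$), there are no $p$-extensions, and the only point is to determine how multiplication by the $v_i$ raises the spectral-sequence filtration.

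The crux is the block $\bigoplus_{i=1}^{n-1}BP^*/I_i\{c_i\}$. Regard $I_n=(v_1,\dots,v_{n-1})$ as a $BP^*$-submodule of the free module $BP^*\{t\}$, filtered by $0\subset(v_1)\subset(v_1,v_2)\subset\dots\subset I_n$. Since $v_1,\dots,v_{n-1}$ is a regular sequence in $BP^*$, the $i$-th associated graded quotient $(v_1,\dots,v_i)/(v_1,\dots,v_{i-1})$ is isomorphic to $BP^*/I_i$, generated by the class of $v_i$; equivalently, $I_n\{t\}$ is presented by the generators $v_1t,\dots,v_{n-1}t$ subject only to the Koszul relations $v_j\cdot(v_it)=v_i\cdot(v_jt)$, and has associated graded $\bigoplus_{i=1}^{n-1}BP^*/I_i$. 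The preceding Corollary gives, in $ABP^{2*,*}(R_n)$ and hence also modulo $p$, exactly those relations $v_jc_i=v_ic_j$, together with $\Res(c_i)=v_iy$ modulo $\II$ in the $BP^*$-free group $ABP^{2*,*}(\bar R_n)$. The restriction formula forces the assignment $c_i\mapsto v_it$ to be injective, so it is a surjection inducing an isomorphism on associated graded between $\bigoplus_{i=1}^{n-1}BP^*/I_i\{c_i\}$ and $I_n\{t\}$, hence is itself an isomorphism. Combined with the tautology $\tilde H^{*,*'}\otimes BP^*/p\{t\}=BP^*\otimes\tilde H^{*,*'}\{t\}$, the whole block $\bigl(A\oplus\tilde H^{*,*'}\otimes BP^*/p\{t\}\bigr)$ takes the shape $BP^*/I_n\{c_0\}\oplus I_n\{t\}\oplus BP^*\otimes\tilde H^{*,*'}\{t\}$.

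It remains to check that the summands $H^{*,*'}\otimes BP^*=Ah^{*,*'}(pt)$ and $BP^*/I_n\{c_0\}$ split off cleanly. Every Milnor operation $Q_i$ vanishes on $H^{*,*'}$ (because $Q_0(\tau)=\zeta_p=0$), so the Tate part carries no differentials, is concentrated in first degrees $\le n+1$, and is separated in bidegree from the rest; moreover the decomposition $\{1,t,\dots,t^{p-2}\}$ is the one coming from the symmetric-power triangles $(3.3)$--$(3.4)$, which again separates the blocks, so there are no extensions across the powers of $t$. As for $c_0$: for $1\le i\le n-1$ the product $v_ic_0$ has the same bidegree as $c_i$, but $\Res(v_ic_0)=v_i\Res(c_0)=v_i\cdot py=0$ modulo $p$, whereas $\Res(c_i)=v_iy\neq0$; using the inequalities $0\le w(x)$ and $0\le d(x)\le\dim V_a=p^n-1$ one checks that no class of the $\tilde H^{*,*'}\{t\}$-, Tate-, or higher-$t$-blocks sits in that bidegree, so $v_ic_0$ must be a scalar multiple of $c_i$, hence zero. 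Assembling the pieces gives the claimed isomorphism.

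The step I expect to be the main obstacle is this last bookkeeping: confirming that the preceding Corollary furnishes \emph{all} the $v_i$-extensions and that nothing further is forced among the Tate summand, $BP^*/I_n\{c_0\}$, the $I_n\{t\}$-block and the successive powers of $t$. It is a bidegree-by-bidegree verification, carried out with the same $w$- and $d$-estimates used repeatedly in $\S 3$.
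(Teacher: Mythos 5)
Your proof takes essentially the same route as the paper. The paper's proof of this theorem is very terse: it simply notes that the restriction map induces an isomorphism on associated graded
\[ gr(Res):\bigoplus_{i=1}^{n-1}BP^*/I_i\{c_i\}\longrightarrow \bigoplus_{i=1}^{n-1}BP^*/I_i\{v_it\}\cong gr(I_{n}t), \]
and concludes by citing Lemma~4.4 (the $E_\infty$-term) and Corollary~4.6 (the relations $v_ic_j=v_jc_i$ and $\Res(c_i)=v_it$ mod $\II$). Your proposal is the same argument: you take the $E_\infty$-term from Lemma~4.4, filter $I_n\{t\}$ by $(v_1,\dots,v_i)$ so that the graded pieces are visibly $BP^*/I_i$, and use the preceding corollary to identify the $\bigoplus BP^*/I_i\{c_i\}$-block with $I_n\{t\}$ via the restriction. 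The additional bookkeeping you do at the end (showing $v_ic_0=0$ and that the Tate and $\tilde H^{*,*'}\{t\}$ summands do not interfere) is detail the paper's one-line proof silently takes for granted; it is sound, and your instinct to worry about it was reasonable given how little the paper says, but it is not a different method — it is filling in the same argument more carefully.
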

\begin{proof}
We have an isomorphism
\[ gr(Res):\bigoplus_{i=1}^{n-1}BP^*/I_i\{c_i\}
\to \bigoplus_{i=1}^{n-1}BP^*/I_i\{v_it\}\cong gr(I_{n}t).\]
Therefore we get the result
from Lemma 4.4 and Lemma 4.6.
\end{proof}

\begin{proof}[ Proof of Lemma 4.3]
For an element $x=Q_i...Q_1Q_0(a')$ in $E_{2p^i}$, the next nonzero
differential is $d_{2p^{i+1}-1}$. Otherwise
\[ d_r(x)=v\otimes x' \quad |v_i|>|v|>|v_{i+1}|.\]
But by dimensional reason, there does not exist
nonzero $x$ such that $|x|<|x'|<|x|+|Q_{i+1}|$.

The elements $\tilde H^{*,*'}(t^j)$ is generated
as a $BP^*\otimes H^{*,*'}$-algebra  by $\tau t^j$ and 
$bt^j$ for $b\in K_1^M(k)/p$.  Since 
$w(bt^j)=1$, the differential image $w(d_r(bt^i))=0$.
Hence $d_r(bt^j)\in \oplus_{i,k} BP^*/I_i\{c_it^k\}$.
But if $d_r(bt^j)\not =0$, then it contradicts to
the existence of the restriction map $gr(Res)$ in the proof of Theorem 4.7.

At last we consider $\tau t$.
Since $\tau b_1t^i=0$ in the spectral sequence,
$\tau b_1t^i=vc$ for $v\in BP^{<0}$ in $Ah^{*,*'}(R_n)$.
Restrict it to $Ah^{*,*'}(\bar R_n)$,  we have $\tau
v_1t^{i+1}=v(c|_{\bar k})$.  Since $Ah^{*,*'}(\bar R_n)$
is $BP^*/p[\tau]$-free, we see $v=v_1$ and $c|_{\bar k}=\tau t^{i+1}$. This  means that $\tau t^{i+1}$ exists in $Ah^{*,*'}(R_n)$, and it is a permanent cycle.
\end{proof}
\begin{cor} 
The motivic cobordism $Ah^{*,*'}(R_n)$ is isomorphic
to the $Ah^{*,*'}$-submodule of 
$ Ah^{*,*'}[t]/(t^p)$ (for $Ah^{*,*'}\cong BP^*\otimes H^{*,*'}$) generated by 
\[1,\ \ c_0t^{j-1},\ \ I_nt^j, \ \ \tilde H^{*,*'}t^j \quad for \ 1\le j\le p-1.\] 
\end{cor}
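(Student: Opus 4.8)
The plan is to read this off from Theorem 4.7. First I would rewrite the factor $\{1,t,\dots,t^{p-2}\}$ appearing there as multiplication by $t^{j-1}$ for $j=1,\dots,p-1$, so that Theorem 4.7 becomes the statement that, as an $Ah^{*,*'}$-module with $Ah^{*,*'}=Ah^{*,*'}(pt)\cong BP^*\otimes H^{*,*'}$,
\[ Ah^{*,*'}(R_n)\cong Ah^{*,*'}\{1\}\oplus\bigoplus_{j=1}^{p-1}\Bigl(BP^*/I_n\{c_0t^{j-1}\}\oplus I_n\{t^j\}\oplus BP^*\otimes\tilde H^{*,*'}\{t^j\}\Bigr). \]
So what is left is to realize each of these summands as the $Ah^{*,*'}$-span of the advertised generators inside the module $Ah^{*,*'}[t]/(t^p)$.

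Next I would build the comparison map. For the $c_0$-part, Lemma 3.10 (applied with general $n$) identifies $c_0t^{j-1}=Q_{n-1}\cdots Q_1(a't^{j-1})$ with $pt^j$ in $\bZ/p^2$-coefficients, so $c_0t^{j-1}$ is a canonical element of $Ah^{2*,*}(R_n)$ whose restriction to $\bar R_n$ is $py^j$; this is the generator named $c_0t^{j-1}$ in $Ah^{*,*'}[t]/(t^p)$. For the middle part, Corollary 4.8 gives $\Res(c_i)=v_iy \bmod \I$ for $1\le i\le n-1$ and $v_ic_j\equiv v_jc_i \bmod \I$; combined with the isomorphism $gr(\Res)\colon\bigoplus_i BP^*/I_i\{c_i\}\to gr(I_nt)$ already used in the proof of Theorem 4.7, this shows that the summand $I_n\{t^j\}$ is the $Ah^{*,*'}$-span of the elements $v_it^j$ ($v_i\in I_n$), each being the unique lift of $v_iy^j$. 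The remaining two summands are the $Ah^{*,*'}$-spans of $1$ and of the $\tilde H^{*,*'}t^j$, which by Lemma 4.3 are permanent cycles in the relevant Atiyah-Hirzebruch spectral sequence.

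Finally I would check that the module relations match, level by level in the filtration by powers of $t$. The span of $1$ is all of $Ah^{*,*'}$; the relation $I_n\cdot(c_0t^{j-1})=0$ needed for $BP^*/I_n\{c_0t^{j-1}\}$ holds because $v_i(c_0t^{j-1})=p\,c_it^{j-1}$ by Corollary 4.8, and $c_it^{j-1}$ lands in the $I_n\{t^j\}$-summand under the identification $c_it^{j-1}\leftrightarrow v_it^j$, so in the associated graded the $c_0$-part is precisely $BP^*/I_n$; likewise the $v_it^j$ carry no relations beyond those of $I_n$ as a $BP^*$-module, and the $\tilde H^{*,*'}t^j$ carry none at all, since $t^j$ itself is vertical. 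The main obstacle is exactly this last bookkeeping: one must be sure that passing to the submodule of $Ah^{*,*'}[t]/(t^p)$ generated by the four families introduces no collapse and no extra relations, i.e.\ that its associated graded equals the right hand side of the displayed decomposition. Granting that, the submodule is isomorphic to $Ah^{*,*'}(R_n)$, which is the assertion.
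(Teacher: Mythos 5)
The paper states this corollary without any proof; it is simply Theorem~4.7 re-indexed by expanding the factor $\{1,t,\dots,t^{p-2}\}$ into the slots $t^{j-1}$ for $j=1,\dots,p-1$. Your opening re-indexing is exactly that step, and it is correct; the rest of your argument fills in the implicit identifications of generators, which the paper leaves to the reader. Two small points. First, a typo: the two citations of ``Corollary~4.8'' in your middle paragraph (for $\Res(c_i)=v_iy$ and for $v_ic_0\equiv pc_i$) should read Corollary~4.6 — citing Corollary~4.8 inside a proof of Corollary~4.8 would be circular. Second, and more substantively, the identification of $c_0t^{j-1}$ with $pt^j$ inside $Ah^{*,*'}[t]/(t^p)$ has to be read through the topological filtration (equivalently, via the integral lift as in Lemma~3.10 and Corollary~4.6): in the mod-$p$ coefficient ring $Ah^{*,*'}\cong BP^*\otimes H^{*,*'}$ one literally has $p=0$, so the restriction of $c_0t^{j-1}$ to $\bar R_n$ vanishes modulo $I_\infty^2$ rather than equalling $py^j$ on the nose, and the ``submodule'' language in the corollary is an abbreviation for the associated-graded description in Theorem~4.7 rather than a literal submodule cut out by the restriction map. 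Your final paragraph flags exactly this bookkeeping issue, which is honest; the paper leaves it implicit, and resolving it amounts to observing that Theorem~4.7 already gives the associated graded, so there is nothing further to check.
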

We consider here the another cobordism theory
\[ Ak^{*,*'}(Y)=ABP\la n-1\ra^{*,*'}(Y;\bZ/p)\]
so that 
$Ak^{*,*'}\cong BP\la n-1\ra \otimes H^{*,*'}\cong \bZ/p[v_1,...,v_{n-1}]\otimes H^{*,*'}.$
In this case, we note $I_n\cong \tilde k^{*}=BP\la n-1\ra^*-\bZ/p\{1\}$.  Hence we can see

\begin{cor} 
The motivic cobordism $Ak^{*,*'}(R_n)$ is isomorphic
to the $Ak^{*,*'}$-submodule of 
$ Ak^{*,*'}[t]/(t^p)$ (for $Ak^{*,*'}\cong BP\la n-1\ra^*\otimes H^{*,*'}$) generated by 
\[\ \  1,\ \ c_0t^{j-1},\ \ \tilde Ak^{*,*'}t^j\qquad   for\ 1\le j\le p-1,\]
where $ \tilde Ak^{*,*'}=Ak^{*,*'}-\bZ/p.$ 
\end{cor}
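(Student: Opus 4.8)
The plan is to run the Atiyah--Hirzebruch spectral sequence for $Ak^{*,*'}(R_n)$ in the same way as for $Ah^{*,*'}(R_n)$ in the proof of Theorem 4.7, but over the truncated coefficient ring $BP\la n-1\ra^*$. By Lemma 4.2, applied to $R_n\subset M(V_a)$, there is a spectral sequence
\[E_2^{*,*',*''}\cong H^{*,*'}(R_n;\bZ/p)\otimes BP\la n-1\ra^*\Longrightarrow Ak^{*,*'}(R_n),\]
with $E_2$--term given explicitly by Theorem 3.3. First I would observe that the argument of Lemma 4.3 applies and shows the differentials are again of the form $d_{2p^i-1}=v_i\otimes Q_i$; since $v_n=v_{n+1}=\dots=0$ in $BP\la n-1\ra^*$, only those with $1\le i\le n-1$ survive, and the dimension--counting argument in the proof of Lemma 4.3, together with the existence of the restriction map to $\bar R_n$ (whose cobordism $Ak^{*,*'}(\bar R_n)\cong Ak^{*,*'}[t]/(t^p)$ is already known, $\bar R_n$ being a sum of Tate motives), shows there is no room for any further differential, so the spectral sequence degenerates at $E_{2p^{n-1}}$.

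Next I would rerun the $E_\infty$--computation of Lemma 4.4 with $BP^*$ replaced by $BP\la n-1\ra^*$. The iterated Margolis--homology calculation there goes through unchanged, and since $BP\la n-1\ra^*/I_n=\bZ/p$ it produces
\[E_\infty\cong H^{*,*'}\otimes BP\la n-1\ra^*\ \oplus\ \Bigl(\bZ/p\{c_0\}\oplus\bigoplus_{i=1}^{n-1}BP\la n-1\ra^*/I_i\{c_i\}\oplus\tilde H^{*,*'}\otimes BP\la n-1\ra^*\{t\}\Bigr)\{1,t,\dots,t^{p-2}\},\]
using also that, by Lemma 4.3, the differentials vanish on the classes $\tilde H^{*,*'}t^j$.

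It then remains to solve the extension problems, and this is the step I expect to require the most care. As in the proof of Theorem 4.7 (and Corollary 4.8) I would invoke Corollary 4.6: the relations $v_ic_j=v_jc_i\ mod(\I)$ and $Res(c_i)=v_iy\ mod(\I)$ continue to hold in $ABP\la n-1\ra^{*,*'}$ by naturality of the specialization $ABP\to ABP\la n-1\ra$. Since $I_n=\tilde k^{*}=BP\la n-1\ra^*-\bZ/p\{1\}$, the restriction map induces an isomorphism
\[gr(Res):\bigoplus_{i=1}^{n-1}BP\la n-1\ra^*/I_i\{c_i\}\stackrel{\cong}{\longrightarrow}\bigoplus_{i=1}^{n-1}BP\la n-1\ra^*/I_i\{v_it\}\cong gr(\tilde k^{*}t),\]
so the classes $c_i$ ($i\ge 1$) recombine with the $t$--module into the single summand $\tilde k^{*}\{t\}\oplus BP\la n-1\ra^*\otimes\tilde H^{*,*'}\{t\}=\tilde Ak^{*,*'}\{t\}$ (recall $\tilde Ak^{*,*'}=Ak^{*,*'}-\bZ/p=\tilde k^{*}\oplus BP\la n-1\ra^*\otimes\tilde H^{*,*'}$), while $c_0$ survives as $\bZ/p\{c_0\}$, i.e. $\tilde k^{*}c_0=0$. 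Assembling the pieces, $Ak^{*,*'}(R_n)$ is identified with the $Ak^{*,*'}$--submodule of $Ak^{*,*'}[t]/(t^p)$ generated by $1$, by $c_0t^{j-1}$, and by $\tilde Ak^{*,*'}t^j$ for $1\le j\le p-1$, which is the assertion. The main obstacle is exactly this last point: ruling out hidden $v$--extensions beyond those forced by $gr(Res)$, and ruling out differentials that might, in disguise, annihilate $v_n,v_{n+1},\dots$--torsion once those classes are set to zero; as in \S4, both are controlled by the restriction to $\bar R_n$, where the answer $Ak^{*,*'}[t]/(t^p)$ is transparent, which is what makes this a short alternative to the computation in \S4.
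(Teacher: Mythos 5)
Your proposal is correct and is essentially the paper's own argument made explicit: in \S 4 the paper derives this corollary from Theorem 4.7 and Corollary 4.8 by the single observation that over $BP\la n-1\ra^*$ the ideal $I_n$ becomes the augmentation ideal $\tilde k^*$, so $BP^*/I_n\{c_0\}$ collapses to $\bZ/p\{c_0\}$ and $I_n\{t\}\oplus BP^*\otimes\tilde H^{*,*'}\{t\}$ recombines into $\tilde Ak^{*,*'}\{t\}$. Re-running the Atiyah--Hirzebruch spectral sequence of Lemmas 4.2--4.4 and the extension argument via Corollary 4.6 over the truncated coefficients, as you do, is precisely what this one-liner abbreviates; the genuinely different route is the one the paper reserves for \S 5 under Assumption 5.2.
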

In the next section, we give an another short proof for this fact but assuming some properties for $Ak$-motives.


\begin{prop}  Let $G$ be a group of type of $I$.
Let $X=\bG_k/B_k$ be a twisted flag variety.
Then there is a filtration such that  $grAh^{*,*'}(X)$ is isomorphic to the
$BP^*\otimes H^{*,*'}$-subalgebra generated by 
\[1,\ \  \tilde H^{*,*'}y^{i+1} \ (0\le i\le p-2),\ \
and \ \ S(t) \] 
in $BP^*\otimes H^{*,*'}[y]/(y^p)\otimes S(t)/(R_1,R_2'')$  where
\[ R_1=(b_ib_j,b_k |1\le i,j\le 2p-2<k\le \ell), \]
\[ R_2''=(\tilde H^{*,*'}{b_{2i-1}},\tilde H^{*,*'}b_{2i},
   v_1b_{2i}| 1\le i\le p-1).\]
\end{prop}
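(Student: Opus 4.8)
The strategy parallels the proof of Theorem 3.12, with the $Ah$-theory of the Rost motive $R_2$ (Theorem 4.7) playing the role of $H^{*,*'}(R_2;\bZ/p)$. Since $G$ is of type $(I)$, the Petrov--Semenov--Zainoulline decomposition gives $M(X)\cong R_2\otimes(\oplus_s\bT^{\otimes s})$, so in particular $n=2$ and $I_n=I_2=(v_1)$. By Lemma 4.1 each $Ah^{*,*'}(\bT^{\otimes s})$ is a bidegree shift of $Ah^{*,*'}(pt.)\cong H^{*,*'}\otimes BP^*$, and since $H^{*,*'}(\oplus_s\bT^{\otimes s};\bZ/p)\cong H^{*,*'}\otimes S(t)/(b)$ (recalled in $\S2$) the Atiyah--Hirzebruch spectral sequence for $\oplus_s\bT^{\otimes s}$ collapses for dimension reasons, whence $Ah^{*,*'}(\oplus_s\bT^{\otimes s})\cong H^{*,*'}\otimes BP^*\otimes S(t)/(b)$. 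Tensoring an $Ah$-motive with a sum of Tate motives multiplies its AHss by $S(t)/(b)$, on which all differentials vanish since $S(t)/(b)\subset CH^*$ has $w=0$ (the computation of Lemma 4.2). Hence $grAh^{*,*'}(X)\cong grAh^{*,*'}(R_2)\otimes S(t)/(b)$, with $grAh^{*,*'}(R_2)$ the $BP^*\otimes H^{*,*'}$-module of Theorem 4.7 for $n=2$.

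The next step is to repackage this module as the asserted subalgebra. Under the identification $y^i=t^i$ of the lemma preceding Theorem 3.12, the direct summand $H^{*,*'}\otimes BP^*$ of $grAh^{*,*'}(R_2)$, tensored with $S(t)/(b)$, is the subalgebra generated by $1$ and $S(t)$; the summand $BP^*\otimes\tilde H^{*,*'}\{t^j\}$ ($1\le j\le p-1$) is the one generated by $\tilde H^{*,*'}y^{i+1}$ for $0\le i\le p-2$ (in particular $\tilde H^{*,*'}y^p=0$); and the summand $BP^*/I_2\{c_0\}\oplus I_2\{t\}$, times $\{1,\dots,t^{p-2}\}$, accounts for the classes $b_1,\dots,b_{2p-2}$ in $S(t)$, those of $c_0$-type being annihilated by $v_1$ since $BP^*/I_2=BP^*/(v_1)$, those of $v_1t$-type coinciding (after restriction) with the $v_1y^i$ via the isomorphism $gr(Res)$ of Theorem 4.7 and Corollary 4.6. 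Note also that $a'$ itself does not survive the AHss of $R_2$, since $d_{2p-1}(a')=v_1Q_1(a')=v_1c_0\ne0$; this is why $grAh^{*,*'}(X)$ carries no $a'y^i$-terms, unlike $H^{*,*'}(X;\bZ/p)$, so the relations $R_3'$ of Theorem 3.12 drop out and only $R_1$, $R_2''$ remain.

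It then remains to verify these relations in $grAh^{*,*'}(X)$. The relation $R_1=(b_ib_j,b_k)$ holds because it holds in $CH^*(X)/p\cong S(t)/(p,b_ib_j,b_k)$ (the theorem of $\S2$), which is the quotient $\Omega^*(X)/(BP^{<0})$ of $ABP^{2*,*}(X;\bZ/p)$; so $b_ib_j$ and $b_k$ lie in the positive part of the topological ($BP^{<0}$-)filtration and vanish in the corresponding subquotient. The relations $\tilde H^{*,*'}b_i=0$ ($1\le i\le 2p-2$) are the relations $\tilde H^{*,*'}Q_j(a'y^{i-1})=0$ already proved in Theorem 3.12 (from $\tau b_i=Q_i(a)=0$ and $K^M_+(k)b_i=0$), hence hold in $H^{*,*'}(R_2;\bZ/p)$ and in $grAh^{*,*'}(X)$. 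The new relation $v_1b_{2i}=0$ is the annihilation of the $c_0$-type classes by $I_2=(v_1)$ in Theorem 4.7, equivalently the congruence $pc_1\equiv v_1c_0\equiv0\bmod I_\infty^2$ of Corollary 4.6 read modulo $p$.

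\textbf{The main obstacle} is the ring --- as opposed to module --- structure. Theorem 4.7 and the spectral sequence only produce the associated graded, so one must check that the products of the $b_i$ among themselves and with the $\tilde H^{*,*'}y^j$ really obey $R_1$ and $R_2''$ in $grAh^{*,*'}(X)$, tracking the vanishing of $b_ib_j$ and of $v_1b_{2i}$ through the filtration by means of Corollary 4.6 and $CH^*(X)\cong\Omega^*(X)/(BP^{<0})$, and that the resulting subalgebra of $BP^*\otimes H^{*,*'}[y]/(y^p)\otimes S(t)/(R_1,R_2'')$ has the same bigraded dimensions as $grAh^{*,*'}(X)$, so that $R_1,R_2''$ are precisely the defining relations --- no more and no fewer.
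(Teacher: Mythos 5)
Your proposal is correct in substance and follows the route the paper's machinery is built for: the Petrov--Semenov--Zainoulline decomposition, the K\"unneth collapse of the Atiyah--Hirzebruch spectral sequence over the Tate summand (no differentials on $S(t)/(b)$ since $w=0$, by the argument in Lemma~4.3's proof), Theorem~4.7 for $Ah^{*,*'}(R_2)$ with $I_2=(v_1)$, and then the dictionary $c_0t^{i-1}\leftrightarrow b_{2i}$, $c_1t^{i-1}\leftrightarrow b_{2i-1}$. The reading $v_1b_{2i}=0$ coming from the $BP^*/I_2\{c_0\}$ summand, and the odd-indexed $b_{2i-1}$ corresponding to $v_1y^i$ via $gr(\mathrm{Res})$, are exactly right; your observation that $a'$ is killed by $d_{2p-1}$ so that $R_3'$ drops out cleanly explains the passage from Theorem~3.12's relations to $R_1,R_2''$.

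Two remarks. First, the paper states the proposition without a written proof, so the honest ``residual obligation'' you flag at the end (checking that the generators and relations exhaust $E_\infty$, i.e.\ that the indicated subalgebra has the right bigraded size) is indeed what remains; in the paper's style this is routinely left to the reader, much as in Theorem~3.12 where the additive match with the PSZ decomposition is invoked in one line. Second, a word of caution on the paper's own bookkeeping: the proof of Theorem~3.12 writes $Q_0(a'y^{i-1})=b_{2i}$ and $Q_1(a'y^{i-1})=b_{2i-1}$, but a degree count ($|Q_0(a')|=4$, $|Q_1(a')|=2(p+1)$, $|b_1|<|b_2|$) shows these labels are swapped there; the convention stated in \S2 (``$c_1(y^i)=b_{2i-1}$ and $c_0(y^i)=b_{2i}$'') is the consistent one, and it is the one your identifications implicitly use. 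So your $c_0$-type~$=b_{2i}$ assignment is the correct reading, and the relation $v_1b_{2i}=0$ is exactly the annihilation by $I_2$ in Theorem~4.7, equivalently $pc_1\equiv v_1c_0$ of Corollary~4.6 reduced mod~$p$.
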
 
Note that $\tau b_{2i-1}=0$ in AHss, but
$\tau b_{2i-1}=v_1\tau y$ in $Ak^{*,*'}(X)$.

Define the etale cobordism theory by
$ Ah^{*}_{et}(Y)=lim_{N}(\tau^N Ah^{*,*'}(Y)).$
\begin{cor} 
We have the isomorphism
\[ grAh^{*}_{et}(X)\cong K_*^M(k)/p\otimes BP^*
  [y]/(y^p)\otimes S(t)/(b).\]
\end{cor}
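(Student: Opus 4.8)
The plan is to deduce this corollary directly from Proposition~4.12 by passing to the \'etale limit. Recall that $Ah^{*}_{et}(X)$ is obtained from $Ah^{*,*'}(X)$ by inverting $\tau$: in a fixed topological degree it is the stable value of $Ah^{*,*'}(X)$ for $*'\gg 0$, equivalently the colimit $\mathrm{colim}_{*'}Ah^{*,*'}(X)$ along multiplication by $\tau$. Since filtered colimits of $\bZ/p$-modules are exact, this operation commutes with passing to the associated graded of a ($\tau$-stable) filtration, and the filtration of Proposition~4.12 is $\tau$-stable, being the Atiyah--Hirzebruch filtration. Hence $grAh^{*}_{et}(X)\cong \mathrm{colim}_{*'}\bigl(grAh^{*,*'}(X)\bigr)$, and it suffices to invert $\tau$ in the presentation of Proposition~4.12.

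Now I would trace the effect of inverting $\tau$ on $BP^*\otimes H^{*,*'}[y]/(y^p)\otimes S(t)/(R_1,R_2'')$ and on its distinguished subalgebra. On coefficients, $\mathrm{colim}_{*'}H^{*,*'}\cong K_*^M(k)/p$ in each topological degree (multiplication by $\tau$ is an isomorphism on $H^{*,*'}(pt.;\bZ/p)$ once $*'\geq *$), so the base ring becomes $BP^*\otimes K_*^M(k)/p$. For a class $z$ and the module $\tilde H^{*,*'}\cdot z$, the colimit contains the common image of the compatible family $\tau^Nz$ $(N\geq 1)$, which I again write $z$. Consequently the generators $\tilde H^{*,*'}y^{i+1}$ $(0\leq i\leq p-2)$ produce, after inverting $\tau$, the full modules $BP^*\otimes K_*^M(k)/p\cdot y^{i+1}$, so that $y,y^2,\dots,y^{p-1}$ become genuine classes (while $y^p=0$ is already imposed). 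On the relation side, each relation $\tilde H^{*,*'}b_j=0$ of $R_2''$ contains $\tau b_j=0$, so $b_j$ is annihilated by $\tau$ and therefore dies in $\mathrm{colim}_{*'}$; combined with $R_1$, which already kills $b_k$ for $2p-2<k\leq\ell$ (and all products $b_ib_j$), this forces every $b_i=0$, so the factor $S(t)/(R_1,R_2'')$ collapses to $S(t)/(p,b_1,\dots,b_\ell)=S(t)/(b)$. What remains is the $BP^*\otimes K_*^M(k)/p$-subalgebra of $BP^*\otimes K_*^M(k)/p\otimes\bZ/p[y]/(y^p)\otimes S(t)/(b)$ generated by $1$, $y,\dots,y^{p-1}$ and $S(t)$, which is the whole ring. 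This gives
\[ grAh^{*}_{et}(X)\cong K_*^M(k)/p\otimes BP^*[y]/(y^p)\otimes S(t)/(b). \]

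I do not expect a serious obstacle, since the substantive computation is already contained in Proposition~4.12; the only point needing care is the bookkeeping just indicated --- namely that classes such as $y^i$, which exist in $grAh^{*,*'}(X)$ only after multiplication by an element of $\tilde H^{*,*'}$, become genuine classes in the $\tau$-inverted theory, while the $b_j$ become zero, and that no new classes or relations appear in the limit. As an independent check one may run the same limit on the Rost motive via Corollary~4.9 and the decomposition $M(X)\cong R_2\otimes(\oplus_s\bT^{\otimes s})$: inverting $\tau$ in Corollary~4.9 kills the generators $c_0t^{j-1}$ (since $\tau c_0=Q_1\cdots Q_{n-1}(a)$ lies in topological degree $>n+1$ and hence vanishes for $n\geq 2$, and equals $a$, already a scalar, for $n=1$) and the submodules $I_nt^j$, $\tilde H^{*,*'}t^j$ together fill out $BP^*\otimes K_*^M(k)/p\cdot t^j$, giving $Ah^{*}_{et}(R_n)\cong K_*^M(k)/p\otimes BP^*[t]/(t^p)$; tensoring with $S(t)/(b)$ recovers the formula. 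The whole argument is parallel to the passage from Theorem~3.12 to Corollary~3.13 in the motivic cohomology case.
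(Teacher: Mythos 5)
Your argument is correct, and since the paper states this corollary without proof, it is the natural---presumably intended---route: invert $\tau$ in Proposition 4.10 (a slip: you cite it as 4.12), so that the $\tau$-torsion classes $b_i$ die while the classes $\tau y^{i+1}$ in the subalgebra stabilize to genuine $y^{i+1}$, collapsing $S(t)/(R_1,R_2'')$ to $S(t)/(b)$. One small imprecision in your side check via Corollary 4.9: for $n=1$ the class $c_0 t^{j-1}$ is not \emph{killed} in the colimit, since $\tau c_0=a$ is a nonzero scalar in $K^M_2(k)/p$; rather it survives to $a t^{j-1}$ and is merely absorbed into $K_*^M(k)/p\cdot t^{j-1}$, yielding the same answer, and is in any case irrelevant to the corollary, where $n=2$.
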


Recall that $AK(n)^{*,*'}(Y)$ be the motivic Morava
K-theory such that
\[ AK(n)^{2*,*}(pt.)\cong K(n)^*=\bZ/p[v_n, v_n^{-1}].\]
\begin{cor}  Suppose  the same assumption for
$X,k$ as Theorem 4.10.  Then we have
\[ grAK(1)^{*,*'}(X)\cong K(1)^*\otimes H^{*,*'}\otimes
S(t)/(R^K_1,R^K_2)\]
where $R^K_1=(b_{2i},b_k| 1\le i\le p-1,\ 2p-1\le k\le \ell)$ 
and
\[R_2^K=( v_1(v_1^{-1}b_1)^i-b_{2i-1},\ \ b_1^p).\]
\end{cor}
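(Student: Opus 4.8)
The plan is to run the same argument that gave $grAh^{*,*'}(X)$ and $grAk^{*,*'}(X)$, specialising the algebraic cobordism computation by inverting $v_1$ and killing $v_i$ ($i\ge 2$). First I would reduce to the Rost motive. By $[\mathrm{Pe\text{-}Se\text{-}Za}]$, for $G$ of type $(I)$ we have $M(X)\cong R_2\otimes(\oplus_s\bT^{\otimes s})$, and since $AK(1)^{*,*'}(\oplus_s\bT^{\otimes s})\cong AK(1)^{*,*'}\otimes_{\bZ/p}S(t)/(b)$, additively $AK(1)^{*,*'}(X)\cong AK(1)^{*,*'}(R_2)\otimes_{\bZ/p}S(t)/(b)$. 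The quadratic relations $b_ib_j$ ($1\le i,j\le 2p-2$), the relations $b_k$ ($k>2p-2$), and the $S(t)$-algebra structure are then forced by $CH^*(X)/p\cong S(t)/(p,b_ib_j,b_k)$ (Theorem 3.12), exactly as in the $Ah$-case. So everything comes down to identifying $AK(1)^{*,*'}(R_2)$ as a ring together with the images of $b_{2i-1}=Q_1(a'y^{i-1})$ and $b_{2i}=Q_0(a'y^{i-1})$.

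Second I would compute $AK(1)^{*,*'}(R_2)$ from the Atiyah--Hirzebruch spectral sequence $E_2^{*,*',*''}\cong H^{*,*'}(R_2;\bZ/p)\otimes K(1)^*\Longrightarrow AK(1)^{*,*'}(R_2)$ of Lemma 4.2; equivalently one localises the computation of $Ak^{*,*'}(R_2)=ABP\la 1\ra^{*,*'}(R_2;\bZ/p)$ at $v_1$, using $AK(1)^{*,*'}(R_2)\cong Ak^{*,*'}(R_2)[v_1^{-1}]$. Since $K(1)^*=\bZ/p[v_1,v_1^{-1}]$ kills $v_i$ for $i\ge 2$, Lemma 4.3 leaves exactly the one differential $d_{2p-1}=v_1\otimes Q_1$. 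From Theorem 3.3, $H^{*,*'}(R_2;\bZ/p)=H^{*,*'}\oplus(\bZ/p\{a',Q_0(a'),Q_1(a')\}\oplus\tilde H^{*,*'}\{t\})\{1,t,\dots,t^{p-2}\}$, and the $Q_1$-homology is immediate: $Q_1$ annihilates $H^{*,*'}$ and each $\tilde H^{*,*'}t^{j+1}$ (because $Q_1\tau=0$, $Q_1$ of a Milnor symbol lies in $H^{*>3}(Spec(k))=0$, and $Q_1$ of every basis monomial of $\tilde H^{*,*'}t^{j+1}$ vanishes in $H^{*,*'}(R_2)$ for bidegree reasons), it kills $Q_0(a')t^j$ as a cycle (since $Q_1Q_0(a')=-\delta=0$ in $R_2$, $\delta$ having $w<0$ and hence vanishing in a submotive of the smooth $V_a$), and it sends $a't^j\mapsto Q_1(a')t^j$. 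So the whole $v_1$-tower on the $a't^j$ and the one on the $Q_1(a')t^j$ die, leaving
\[E_\infty\cong\Bigl(H^{*,*'}\oplus\bigoplus_{j=0}^{p-2}\bigl(\bZ/p\{Q_0(a')t^j\}\oplus\tilde H^{*,*'}t^{j+1}\bigr)\Bigr)\otimes K(1)^*.\]

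Third I would settle the extensions by restricting to $\bar R_2$. Because $\bar R_2$ is split (Theorem 2.1), $AK(1)^{*,*'}(\bar R_2)\cong K(1)^*\otimes H^{*,*'}\otimes\bZ/p[y]/(y^p)$ with the spectral sequence collapsing, and by Corollary 4.6 the surviving class $Q_0(a')t^j$ restricts with leading term $v_1y^{j+1}$. As $v_1$ is a unit, $gr(\Res)$ carries the generator $Q_0(a')t^j$ to $v_1y^{j+1}$, $H^{*,*'}\{1\}$ isomorphically to itself, and $\tilde H^{*,*'}t^{j+1}$ isomorphically to $\tilde H^{*,*'}y^{j+1}$; comparing the $K(1)^*\otimes H^{*,*'}$-module ranks degreewise shows $gr(\Res)$ is an isomorphism, hence so is $\Res$, so $AK(1)^{*,*'}(R_2)$ is split. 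Transporting the product, the classes that died in $E_\infty$ are exactly the family $\{b_{2i}\}$ (and the $b_k$ with $k>2p-2$), which therefore vanish, giving $R_1^K$; for the complementary family, $b_1\mapsto v_1y$ gives $y=v_1^{-1}b_1$, whence $b_{2i-1}\mapsto v_1y^i=v_1(v_1^{-1}b_1)^i$ and $(v_1y)^p=v_1^p y^p=0$ give $b_{2i-1}=v_1(v_1^{-1}b_1)^i$ and $b_1^p=0$, i.e.\ $R_2^K$. Feeding this into $AK(1)^{*,*'}(X)\cong AK(1)^{*,*'}(R_2)\otimes S(t)/(b)$ yields $grAK(1)^{*,*'}(X)\cong K(1)^*\otimes H^{*,*'}\otimes S(t)/(R_1^K,R_2^K)$, and dividing by $\tau$ recovers Corollary 4.12's etale version as a check.

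The hard part will be this third step. The relations of Corollary 4.6 are valid only modulo $I_\infty^2$, and once $v_1$ is inverted the ideal $I_\infty$ becomes the unit ideal, so one cannot read off $\Res$ of the interesting classes on the nose; one must argue through the spectral sequence that $gr(\Res)$ is an isomorphism, so that the ambiguous higher terms in $\Res(Q_0(a')t^j)$ amount only to a unit scaling and a triangular change of the generators $b_1,b_3,\dots$, and that no residual additive or multiplicative extensions remain. This is precisely why the statement is formulated for $grAK(1)^{*,*'}(X)$ rather than for $AK(1)^{*,*'}(X)$ itself. A secondary, purely bookkeeping point is to track the Tate twists in passing to the $S(t)$-module structure, and to verify that, in the $v_1$-inverted theory, the family of generators which only satisfied $v_1b=0$ in the $Ah$-computation now vanishes outright, while the complementary family turns into $v_1$-multiples of powers of the single class $b_1$.
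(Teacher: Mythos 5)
Your overall strategy is the same as the paper's: both reduce to the Rost motive, both get the answer by inverting $v_1$ in the $Ah^{*,*'}$- (equivalently $Ak^{*,*'}$-) computation and observing that $y=v_1^{-1}b_1$ then exists, and both read off $R_1^K$ and $R_2^K$ from the relations $v_1 b_{2i}=0$ and $\tau b_{2i-1}=v_1\tau y^i$.  The paper's actual proof of Corollary 4.12 is much terser than your write-up: it does not re-run the Atiyah--Hirzebruch spectral sequence for $AK(1)$, it simply inverts $v_1$ in the conclusion of Proposition 4.10 and its trailing remark ($\tau b_{2i-1}=v_1\tau y$, $v_1 b_{2i}=0$), defines $y=v_1^{-1}b_1$, and writes down the relations.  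Your second step (re-deriving the $E_\infty$ for $AK(1)(R_2)$ from $d_{2p-1}=v_1\otimes Q_1$) is therefore extra work that the paper bypasses, though it is correct and adds clarity.  The subtlety you flag in your third paragraph --- that the relations of Corollary 4.6 are only modulo $I_\infty^2$, which becomes the unit ideal after inverting $v_1$ --- is a real one, and the paper avoids it exactly as you predict: by working from the on-the-nose relation $\tau b_{2i-1}=v_1\tau y^i$ established before Proposition 4.10 (in the proof of Lemma 4.3), not from the mod-$I_\infty^2$ statement, and by formulating the conclusion for the associated graded.

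One concrete problem to repair: you have an internal indexing inconsistency.  You adopt ``$b_{2i-1}=Q_1(a'y^{i-1})$ and $b_{2i}=Q_0(a'y^{i-1})$'' (taken from the line after Theorem 3.12, which disagrees by degree with Section 2, where $c_1(y^i)=b_{2i-1}$ with $c_1=Q_0(a')$, giving $|b_1|=4$, $|b_2|=2p+2$), yet your $E_\infty$ computation has $Q_0(a')t^j$ surviving and $Q_1(a')t^j$ dying, and you then declare the dying family to be $\{b_{2i}\}$.  Under your stated convention the dying family is $\{b_{2i-1}\}$, which contradicts $R_1^K$.  Under the degree-consistent convention $b_{2i-1}=Q_0(a'y^{i-1})$, $b_{2i}=Q_1(a'y^{i-1})$ (which is also the one that makes Corollary 4.6 give $\mathrm{Res}(b_1)=\mathrm{Res}(c_1)=v_1 y$), your spectral-sequence computation and your conclusion align perfectly.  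You should use the latter convention throughout.
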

\begin{proof}
In $Ah^{*,*'}(X)$, we have $\tau b_1=\tau v_1y$. (Note
$\tau y$ exists in $Ah^{*,*'}(X)$, but $y$ itself does not.)
Let new $y=v_1^{-1}b_1$ in $AK(1)^{*,*'}(X)$. Then we have
$b_{2i-1}=v_1y^i=v_1(v_1^{-1}b_1)^i$ in $AK(1)^{*,*'}(X)$.
Of course it holds for this $y$ that   $hb_1=hv_1y$ for $h\in \tilde H^{*,*'}$.
\end{proof}

Recall 
$
grK(1)^*(G/T)\cong K(1)^*[y]/(y^p)\otimes S(t)/(b).$
Here $b_{2i-1}=v_1y^i$, in particular, $y=v_1^{-1}b_1$
in $K(1)^*(G/T)$.  Thefore we see
\[ K(1)^*(G/T)\cong K(1)^*\otimes S(t)/(R_1^k,R_2^K)
\cong AK(1)^{2*,*}(X).\]
Hence $AK(1)^{*,*'}(X)\cong H^{*,*'}\otimes K(1)^*(G/T)$.

The graded algebra associated to the gamma filtration
of the (topological $mod$ $K$-theory) $K^*(G/T;\bZ/p)$ is given ([Ya6])
\[ gr(1)_{\gamma}^*(G/T)\cong gr(1)^*_{\gamma}(X)\]
\[ \cong CH^*(X)/(p,b_{2i})\cong S(t)/(p,R_1^K,b_{2i-1}b_{2j-1}
|1\le i,j\le p-1).\]
The associated graded algebra in the above corollary
gives more strong information for the gamma filtration
than $gr(1)_{\gamma}^*(X)$. For example, from
$b_1^2=0\in gr(1)_{\gamma}^*(X)$, we see
$b_1^2\in F_{\gamma}^{2|b_1|+2}$, but from 
$b_1^2=v_1b_3\in gr(K(1)^*(X))$,
we see
\[b_{1}^2\in F_{\gamma}^{|b_3|}, \quad where \ 2|b_1|+2p-2=|b_3|.\]

  \section{ $Ak^{*,*'}(Y)=ABP\la n-1\ra^{*,*'}(Y;\bZ/p)$}
Throughout this section,
we assume 
\[ K_{n+2}^M(k)/p=0,\quad Q_0(\tau)=\zeta_p =0\in K_1^M(k)/p\]
as the preceding section.

In algebraic topology, $BP\la m\ra^*(Y)$ is the cohomology theory
with the coefficient ring $BP\la m\ra^*=\bZ_{(p)}[v_1,...,v_m]$.
Let us write
\[ Ak^{*,*'}(Y)=ABP\la n-1\ra^{*,*'}(Y;\bZ/p).\]
Hence $Ak^{*,*'}(pt)\cong H^{*,*'}[v_1,...,v_{n-1}]$.
In this section we compute $Ak^{*,*'}(R_n)$ 
assuming the existence of some category of
$Ak^{*.*'}$-$motives$. The result 
is get quite easily,
and  of course, coincides with Corollary 4.9.
For ease of notations, hereafter this section, we simply write
\[ c=c_0,\ \delta=\delta_a,\ \xi'=Q_n(c_0),\ \xi=\xi_a,\ \chi=\chi_a,\
M^i=M_a^i.\]

\begin{lemma} We have $ Ak^{*,*'}(\chi)\cong Ak^{*,*'}
\oplus Ak^{*,*'}(\tilde \chi)$, and 
\[ Ak^{*,*'}(\tilde \chi)\cong
\bZ/p[\xi]\{c,\delta,\xi',\xi\}\]
where  $c=Q_{n-1}...Q_1(a')$, $\delta=Q_0c$ and $\xi'=Q_n(c), Q_0(\xi')=\xi$.
\end{lemma}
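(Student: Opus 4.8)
The plan is to compute $Ak^{*,*'}(\chi)$ by running the Atiyah--Hirzebruch spectral sequence
\[
E_2^{*,*',*''}\cong H^{*,*'}(\chi;\bZ/p)\otimes BP\la n-1\ra^*
\Longrightarrow Ak^{*,*'}(\chi),
\]
exactly as in Lemma 4.2 (the Cech simplicial sheaf $\chi$ is a motive over a point, so the same transfer/$d_r$-commutation argument applies). From Theorem 3.1 together with the hypothesis $K_*^M(k)/p=0$ for $*>n+1$ — which gives $K_*^M(k)/\Ker(a)\cong\bZ/p$ — we know $H^{*,*'}(\chi;\bZ/p)\cong H^{*,*'}\oplus H^{*,*'}(\tilde\chi;\bZ/p)$ with
\[
H^{*,*'}(\tilde\chi;\bZ/p)\cong Q(n)\otimes\bZ/p[\xi]\{a'\}.
\]
Splitting the exterior algebra $Q(n)=\Lambda(Q_0,\dots,Q_n)$ according to whether each $Q_j$ occurs, and using that in $Ak$-theory only the differentials $d_{2p^i-1}=v_i\otimes Q_i$ for $0\le i\le n-1$ are available (since $BP\la n-1\ra^*=\bZ/p[v_1,\dots,v_{n-1}]$ carries no $v_n,v_{n+1},\dots$), I would identify the $E_\infty$-page: iterating the homology-with-respect-to-$v_i\otimes Q_i$ computation of Lemma 4.4 kills all of $Q(n)\{a'\}$ except the four classes that survive every $Q_i$ for $i\le n-1$, namely $c=Q_{n-1}\cdots Q_1(a')$ and $\delta=Q_0 c$ (these are not in the image of, nor killed by, any $v_iQ_i$ with $i\le n-1$ once one checks dimensions), and their $Q_n$-partners $\xi'=Q_n(c)$ and $\xi=Q_0(\xi')$; tensoring with $\bZ/p[\xi]$ and noting $Q_n$ is not a differential in $Ak$, one gets $E_\infty\cong BP\la n-1\ra^*\otimes\bZ/p[\xi]\{c,\delta,\xi',\xi\}$ for the reduced part, and $BP\la n-1\ra^*\otimes H^{*,*'}$ for the split summand.

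Next I would promote this graded answer to an actual isomorphism. Since $H^{*,*'}(\tilde\chi;\bZ/p)$ is concentrated in a range where the $v_i$-multiplication is forced to be the obvious free one (the classes $c,\delta,\xi',\xi$ and their $\xi$-multiples sit in distinct total degrees modulo the $BP\la n-1\ra^*$-grading), there are no nontrivial extension problems: the spectral sequence is multiplicative, $c,\delta,\xi'$ already exist integrally in $ABP\la n-1\ra$ because they lift the Chow/motivic classes $Q_{n-1}\cdots Q_1(a')$ etc., and $\xi=Q_0(\xi')$ exists by the same Milnor-operation argument. Thus $Ak^{*,*'}(\tilde\chi)\cong\bZ/p[\xi]\{c,\delta,\xi',\xi\}$ as an $Ak^{*,*'}$-module, and the splitting $Ak^{*,*'}(\chi)\cong Ak^{*,*'}\oplus Ak^{*,*'}(\tilde\chi)$ comes from the cofibre sequence $\tilde\chi\to\chi\to pt$, which is split because $\chi\to pt$ admits no obstruction after applying $Ak$ (the composite $pt\to\chi\to pt$ is the identity, giving a section on $Ak^{*,*'}(pt)=Ak^{*,*'}$).

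The main obstacle is the bookkeeping in the iterated Margolis-homology computation: one must check carefully that among the $2^{n+1}$ monomials $Q_{j_1}\cdots Q_{j_r}(a')$ only the designated four classes survive all the differentials $d_{2p^i-1}=v_i\otimes Q_i$ $(1\le i\le n-1)$ and $d_{2p^0-1}$ is vacuous in $Ak$-theory (there is no $v_0$) — so in fact $Q_0$ does \emph{not} act as a differential here, which is why $\delta=Q_0 c$ and $\xi=Q_0\xi'$ persist to $E_\infty$ rather than being killed. This is the exact point where the $Ak$-computation diverges from the $Ah$-computation of Theorem 4.7, and it must be reconciled with Corollary 4.9 (where $I_n=\tilde k^*$): there the surviving family was $1, c_0 t^{j-1}, \widetilde{Ak}^{*,*'}t^j$, and one should note that passing from $\chi$ to $M^{p-1}=R_n$ via the triangles (3.3)--(3.4) converts $\xi',\xi$-classes into the $t$-multiples. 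I would close by remarking that this is consistent and refer forward to the $M^i$-computation for the rest.
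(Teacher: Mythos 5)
Your proposal is correct and follows essentially the same route as the paper: run the Atiyah--Hirzebruch spectral sequence for $Ak$ with $E_2\cong H^{*,*'}(\tilde\chi;\bZ/p)\otimes BP\la n-1\ra^*$, decompose $Q(n)$ so that the differentials $d_{2p^r-1}=v_r\otimes Q_r$ ($1\le r\le n-1$) kill everything except $\Lambda(Q_0,Q_n)\{Q_{n-1}\cdots Q_1(a')\}\otimes\bZ/p[\xi]$ with coefficient ring $BP\la n-1\ra^*/I_n\cong\bZ/p$, and use the splitting of $\chi\to pt$. One small correction to your commentary: the point of divergence from the $Ah$-computation is the absence of $v_n,v_{n+1},\dots$ (so there is no $Q_n$-differential in $Ak$-theory), not the absence of $v_0$ --- in both the $Ah$- and $Ak$-spectral sequences one already has $v_0=p=0$, so $Q_0$ never acts as a differential in either case.
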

\begin{proof}
We consider AHss
\[E_2^{*,*',*''}\cong H^{*,*'}(\tilde \chi;\bZ/p)
\otimes BP\la n-1\ra \Longrightarrow Ak^{*,*'}(\tilde \chi).\]
Here $H^{*,*'}(\tilde \chi; \bZ/p)\cong 
Q(n)\{a'\}\otimes \bZ/p[\xi]$.

We consider the decomposition
\[ Q(n)\cong \Lambda(Q_0,Q_n)\otimes Q(n-1)'\quad with\
Q(n-1)'=\Lambda(Q_{n-1},...,Q_1).\]
We easily see all differentials are of form $d_{2p^r-1}(x)=v_r\otimes Q_r(x)$.  Hence we have
\[ E_{2p^{n-1}-1}^{*,*',*''}\cong 
\Lambda(Q_0,Q_n)\otimes BP\la n-1\ra^*/I_n\{Q_{n-1}...Q_1(a')\}
[\xi]\]
\[ \cong \Lambda(Q_0,Q_n)\otimes \bZ/p\{c\}[\xi]
\cong \bZ/p\{c,Q_0c,Q_n(c),Q_0Q_n(c)\}[\xi]. \]
Here we used
$ BP\la n-1\ra^*/I_n\cong \bZ/p[v_1,...,v_{n-1}]/(v_1,...,v_{n-1})\cong \bZ/p.$
\end{proof}

Elements $1,\delta',\delta,\xi',\xi$ in $Ak^{*,*'}(\chi)$ are contained in  $D_i$ as follows 
\[\begin{CD}
    @>{D_{-1}}>{1}>   @>{D_{0}}>{c,\delta}> @>{D_{1},...}>{0,...}> @>{D_{p-2}}>{0}> @>{D_{p-1}}>{\xi',\xi}> @>{D_{p}}>{c\xi,\delta\xi}>
\end{CD}  \]

Recall the category
$Ak$-motives in the preceding section.
More strongly, we assume
\begin{as}  There is a triangular category $\bD\bM(Ak)$
which contains
the category of $Ak$-motives 
such that for $X\in \bD\bM(Ak)$, we can define $Ak^{*,*'}(X)$,
and 
for a cofibering $X\to Y\to Z$ in $\bD\bM(Ak)$, we have
the induced exact sequence
\[ ...\to Ak^{*,*'}(Z)\to Ak^{*,*'}(Y)\to Ak^{*,*'}(X)\to Ak^{*+1,*'}(Z)\to..\]
Moreover,  $\chi$ is an object of $\bD\bM(Ak)$
and 
\[  Hom_{\bD\bM(Ak)}(M^i,\chi(*')[*])\cong
     Ak^{*,*'}(M^i).\]
 \end{as}

From the assumption we can define
the map $\delta: \chi\to \chi(*')[*]$
in the category $\bD\bM(Ak)$. 
We still know $\delta Q_n(a')=a'\xi$, which induces
$\delta \xi'=c\xi$.

The map $\times \delta : Ak^{*,*'}(\chi;\bZ/p)\to
Ak^{*,*'}(\chi;\bZ/p)$ is written as
\[ \begin{CD}
   @. @>{D_{-1}}>{\ \ 1}>  
 @>{D_{0}}>{c,\delta}>
 @>{D_{1},...,D_{p-3}}>{0,...,0}>
 @>{D_{p-2}}>{0}> 
@>{D_{p-1}}>{\xi',\ \xi}> 
@>{D_{p}}>{c\xi,\ \delta\xi}> \\
 @. @V{\delta}VV  @V{\delta}VV   @V{\delta}VV  @V{\delta}VV @V{\delta}VV  @V{\delta}VV \\
  @>{1}>{D_{-1}}> 
  @>{c,\delta}>{D_0}>
 @>{0,0}>{D_{1}}>
@>{0,...,0}>{D_2,...,D_{p-2}}> 
@>{\xi',\ \xi}>{D_{p-1}}>
 @>{c\xi,\ \delta\xi}>{D_{p}}>
@>{0,\ 0}>D_{p+1}>
\end{CD}  \]

 We ca easily see
$ Coker(\delta)\cong 
Ak^{*,*'}\oplus \bZ/p\{c\}\oplus
\bZ/p[\xi]\{\xi',\xi\},$  and
\[
\quad  Ker (\delta)\{t\}\cong 
\tilde Ak^{*,*'}\{t\}
\oplus \bZ/p[\xi]
\{ct,\delta t\}.\]
\begin{lemma}
We have the isomorphism 
\[ Ak^{*,*'}(M;\bZ/p)\cong 
Ak^{*,*'}\oplus \bZ/p\{c\}\oplus \tilde Ak^{*,*'}\{t\} \oplus 
  \bZ/p\{ct,\delta t, \xi',\xi\}\otimes\bZ/p[\xi].\]
 \end{lemma}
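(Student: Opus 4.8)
The plan is to run the computation of $H^{*,*'}(M;\bZ/p)$ from \S3 (the argument preceding Lemma 3.7) line for line, replacing $H^{*,*'}(-;\bZ/p)$ everywhere by $Ak^{*,*'}(-)$ and replacing the exactness of motivic cohomology by the long exact sequence guaranteed by Assumption 5.2. Concretely, apply $Ak^{*,*'}(-)$ to the distinguished triangle (3.2) (equivalently (3.3) with $i=1$): by Assumption 5.2 this yields the exact sequence
\[ \cdots\stackrel{\delta}{\gets}Ak^{*,*'}(\chi(b_n)[2b_n])\stackrel{j_1}{\gets}Ak^{*,*'}(M)\stackrel{j_2}{\gets}Ak^{*,*'}(\chi)\stackrel{\delta}{\gets}Ak^{*,*'}(\chi(b_n)[2b_n])\gets\cdots \]
whose connecting homomorphism is cup product by the class $\delta=Q_0(c)\in Ak^{2b_n+1,b_n}(\chi)$ representing the morphism $\delta\colon\chi\to\chi(b_n)[2b_n+1]$ of $\bD\bM(Ak)$; the identification of this connecting map with $\times\delta$ uses the Hom-identification of Assumption 5.2 exactly as the corresponding identification in \S3 (preceding the lemma $\delta\cdot Q_n(a')=\xi\cdot a'$) used the adjunction in $\bD\bM$. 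Since $Ak^{*,*'}\cong\bZ/p[v_1,\dots,v_{n-1}]\otimes H^{*,*'}$ is a $\bZ/p$-algebra, every group that occurs is a $\bZ/p$-vector space, so the long exact sequence decomposes into split short exact sequences of vector spaces and gives $Ak^{*,*'}(M)\cong Coker(\delta)\oplus Ker(\delta)[b_n](2b_n)$, the last summand being recorded as $Ker(\delta)\{t\}$ with $t$ a vertical class of bidegree $(2b_n,b_n)$.

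Next I would compute the self-map $\times\delta$ on $Ak^{*,*'}(\chi)\cong Ak^{*,*'}\oplus\bZ/p[\xi]\{c,\delta,\xi',\xi\}$ from Lemma 5.1. Only two facts are needed. First, $\delta$ is the $Q_0$-image of $c$, and its bidegree ($w(\delta)=-1$) forces $\times\delta$ to vanish on the strata $D_1,\dots,D_{p-2}$ and on $c$ and on $\delta$ itself, so the only nonzero values of $\times\delta$ are $1\mapsto\delta$, $\xi'\mapsto c\xi$, $\xi\mapsto\delta\xi$ together with their $\xi$-multiples. Second, the relation $\delta\cdot\xi'=c\cdot\xi$: this follows from $\delta\cdot Q_n(a')=a'\xi$, proved exactly as the lemma $\delta\cdot Q_n(a')=\xi\cdot a'$ in \S3, by applying $Q_{n-1}\cdots Q_1$. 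This reproduces the diagram displayed just before the present lemma, from which one reads off $Coker(\delta)\cong Ak^{*,*'}\oplus\bZ/p\{c\}\oplus\bZ/p[\xi]\{\xi',\xi\}$ and $Ker(\delta)\{t\}\cong\tilde Ak^{*,*'}\{t\}\oplus\bZ/p[\xi]\{ct,\delta t\}$. Substituting these into the splitting of the previous paragraph and merging the two $\xi$-towers, $\bZ/p[\xi]\{\xi',\xi\}\oplus\bZ/p[\xi]\{ct,\delta t\}=\bZ/p\{ct,\delta t,\xi',\xi\}\otimes\bZ/p[\xi]$, gives the asserted isomorphism.

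The only genuinely load-bearing point is the identification of the connecting homomorphism of the exact sequence of Assumption 5.2 with cup product by a class of $Ak^{*,*'}(\chi)$, together with the multiplicative structure (notably $\delta\cdot\xi'=c\cdot\xi$) that this identification transports into $\bD\bM(Ak)$; once that is in hand the splitting is automatic because the coefficient ring is a $\bZ/p$-algebra, and everything else is the bookkeeping already carried out in the excerpt, so I do not anticipate a serious obstacle. It is worth recording the $\bZ/p$-vector-space splitting explicitly, as the parallel step for $H^{*,*'}(M;\bZ/p)$ in \S3 was left implicit.
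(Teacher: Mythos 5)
Your proposal is essentially the same as the paper's argument: apply $Ak^{*,*'}$ to the triangle (3.3) with $i=1$ using Assumption 5.2, identify the connecting map with cup product by $\delta$, use the relation $\delta\xi'=c\xi$ (induced from $\delta Q_n(a')=a'\xi$), and split the resulting short exact sequences over $\bZ/p$. One small correction: the vanishing of $\times\delta$ on $c$ and $\delta$ is not a consequence of $w(\delta)=-1$ (that inequality constraint applies to smooth varieties, not to $\chi$, where $\delta$ itself lives with $w(\delta)=-1$); it follows instead from the degree stratification, since $D_1(\chi)=\cdots=D_{p-2}(\chi)=0$ in the target. Your explicit remark that the coefficient ring being a $\bZ/p$-algebra forces the long exact sequence to split is a useful clarification that the paper leaves implicit.
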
 

Next, we compute $Ak^{*,*}(M^i;\bZ/p)$ for $1<i<p-1$.
As lemma 3.9, we can compute  
\begin{lemma}
We have the isomorphism 
\[ Ak^{*,*'}(M^{p-2};\bZ/p)\cong 
Ak^{*,*'}\oplus (\bZ/p\{c\}\oplus \tilde Ak^{*,*'}\{t\})
\{1,t,...,t^{p-3}\}\]
\[\oplus (\bZ/p\{ct^{p-2},\delta t^{p-2},
\xi',\xi\})\otimes \bZ/p[\xi].\] \end{lemma}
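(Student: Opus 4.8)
The plan is to run, for $Ak^{*,*'}$, precisely the inductive computation carried out for $H^{*,*'}(M^{p-2};\bZ/p)$ in Lemmas 3.8 and 3.9, with Assumption 5.2 supplying the triangulated framework in place of $\bD\bM$. The base case is Lemma 5.4, giving $Ak^{*,*'}(M^1)=Ak^{*,*'}(M)$; the source of all the connecting maps is controlled by Lemma 5.1, which gives $Ak^{*,*'}(\chi)=Ak^{*,*'}\oplus\bZ/p[\xi]\{c,\delta,\xi',\xi\}$, whose bidegrees fall, as in \S3, into the strips $D_j$ with $j\equiv -1,0\pmod p$.

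First I would apply $Ak^{*,*'}(-)$ to the distinguished triangle (3.4) for each $2\le i\le p-1$. Assumption 5.2 yields the long exact sequence
\[ \cdots \stackrel{r_i^*}{\gets} Ak^{*-2b_ni,*'-b_ni}(\chi)\gets Ak^{*,*'}(M^i)\gets Ak^{*,*'}(M^{i-1})\stackrel{r_i^*}{\gets}\cdots \]
and, through the identification $\mathrm{Hom}_{\bD\bM(Ak)}(M^{i-1},\chi(*')[*])\cong Ak^{*,*'}(M^{i-1})$ postulated in that Assumption, it tells us that $r_i^*$ is cup product with a single class in $Ak^{2b_ni+1,b_ni}(M^{i-1})$. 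Splitting the sequence gives $Ak^{*,*'}(M^i)\cong \mathrm{Coker}(r_i^*)\oplus\mathrm{Ker}(r_i^*)\{t^i\}$, with $t^i$ the vertical class of bidegree $(2b_ni,b_ni)$.

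The heart of the matter is identifying this class, i.e.\ showing $r_i^*(1)=\delta t^{i-1}$ and $r_i^*(\xi')=c\,t^{i-1}\xi$, exactly as in Lemma 3.8. The relation $\delta\cdot\xi'=c\xi$, which descends from $\delta\cdot Q_n(a')=a'\xi$, pins down $r_i^*(\xi')$ once $r_i^*(1)$ is known, and $r_i^*(1)$ is forced by a weight count: a surviving nonzero $\delta t^{i-1}$ (resp.\ $c\,t^{i-1}\xi$) would persist into $Ak^{*,*'}(M^{p-1})=Ak^{*,*'}(R_n)$, which is a summand of the $Ak$-motive of the smooth norm variety $V_a$, contradicting $w(\delta)=-1<0$ (resp.\ $w(c\xi)<0$) together with the inequality $w(x)\ge 0$ valid on $Ak^{*,*'}$ of a smooth variety — this last fact passing up from $H^{*,*'}$ through the Atiyah--Hirzebruch spectral sequence because $w(v_j)=0$. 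Granting this, $\mathrm{Coker}(r_i^*)$ contributes a fresh copy of $\bZ/p\{c\}\oplus\tilde Ak^{*,*'}\{t\}$ as a $t^{i-1}$-multiple and converts $\bZ/p\{ct^{i-1},\delta t^{i-1}\}\otimes\bZ/p[\xi]$ into $\bZ/p\{ct^i,\delta t^i\}\otimes\bZ/p[\xi]$, while $\mathrm{Ker}(r_i^*)\{t^i\}=\tilde Ak^{*,*'}\{t^i\}$, the $\xi',\xi,\bZ/p[\xi]$ part being carried along untouched. Induction from Lemma 5.4 up to $i=p-2$ then produces the stated isomorphism; as a check it is consistent with Corollary 4.9, from which $Ak^{*,*'}(R_n)=Ak^{*,*'}(M^{p-1})$ is recovered by running the last triangle (3.3) with $i=p-1$ and discarding the $\xi$-terms as in Lemma 3.10.

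I expect the genuine obstacle — just as in \S3 — to be this determination of $r_i^*$: the $Ak^{*,*'}$-module structure alone leaves it ambiguous, and one really needs the embedding $R_n\subset V_a$ and the weight positivity, which is exactly the point where Assumption 5.2 (giving $Ak^{*,*'}$ on motives of smooth varieties, with the usual exact sequences) is indispensable. Once $r_i^*$ is known, the rest is the same strip-by-strip bookkeeping with the $D_j$'s as in Lemmas 3.8--3.9.
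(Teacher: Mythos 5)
Your proposal is correct and takes essentially the same route as the paper, whose entire proof of this lemma is the phrase ``As lemma 3.9, we can compute,'' i.e.\ the inductive strip-by-strip argument of \S 3 transported to $Ak^{*,*'}$ under Assumption 5.2. You have spelled out exactly those steps (base case $Ak^{*,*'}(M)$, triangle (3.4), pinning down $r_i^*$ via weight positivity together with $\delta\cdot\xi'=c\xi$), and your observation that weight positivity on smooth varieties carries over from $H^{*,*'}$ to $Ak^{*,*'}$ through the Atiyah--Hirzebruch spectral sequence because $w(v_j)=0$ is a useful clarification that the paper leaves implicit.
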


At last we compute $Ak^{*,*'}(M^{p-1})$.
Recall Lemma 3.9 and the map $r_{p-1}^*$ is given in $H^{*,*'}(M^{p-2};\bZ/p)$
by

\[ 1\mapsto \delta t^{p-2},\quad 
 a'\mapsto Q_n(a'), \quad Q_n(a')\mapsto a'\xi t^{p-2}. \] 
Hence, in $Ak^{*,*'}(M^{p-2})$,  we have 
$1\mapsto \delta t^{p-2}$ \ $ c\mapsto \xi'$,\ \
\ \ $\delta \mapsto \xi$,\ \ 
$\xi'\mapsto c\xi,$ \ \ $\xi\mapsto \delta t^{p-2}\xi$.
The map $r_{p-1}^*$ is given as follows.
\[\begin{CD}
  @. @.  @>{D_{-1}}>{\ Ak^{*,*'}\{1\}}>  
 @>{D_{0}}>{c,\delta}>
 @>{D_{i+1}}>{0,0}>
@>{D_{p-1}}>{\xi',\xi}>
 @>{D_{p}}>{c\xi,\delta\xi}> \\
 @. @. 
@V{r_{p-1}^*}VV 
@V{r_{p-1}^*}VV 
@V{r_{p-1}^*}VV
@V{r_{p-1}^*}VV 
@V{r_{p-1}^*}VV\\
  @>{Ak^{*,*'}}>{D_{-1}}> 
  @>{ct^i, \tilde Ak^{*,*'}t^{i+1}}>{D_i}>
 @>{c't^{p-2},\delta t^{p-2}}>{D_{p-2}}>
@>{\xi',\xi}>{D_{p-1}}>
 @>{0,0}>{D_{p+i}}>
@>{c\xi t^{p-2}, \delta \xi t^{p-2}}>{D_{2p-2}}>
@>{\xi'\xi,\xi^2}>{D_{2p-1}}>
\end{CD}  \]
Note that $Ker(r_{p-1}^*)\{t\}\cong \tilde Ak^{*,*'}t^{p-1}$.
Using these, we see the following theorem with Assumption 5.2, while it also follows from Corollary 4.9 (without Assumption 5.2).
\begin{thm}   We have the isomorphism
\[ Ak^{*,*'}(R_n)\cong Ak^{*,*'}\oplus
(\bZ/p\{c_0\}\oplus \tilde Ak^{*,*'}\{t\})\{1,...,t^{p-2}\}.\]
\end{thm}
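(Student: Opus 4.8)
The plan is to run, one last time, the long exact sequence argument that produced Lemmas 5.3 and 5.4, now for the top symmetric power $M^{p-1}=R_n$. First I would apply the functor $Ak^{*,*'}(-)$ to the distinguished triangle (3.4) with $i=p-1$,
\[ M(\chi)(b_n(p-1))[2b_n(p-1)]\to M^{p-1}\to M^{p-2}\stackrel{r_{p-1}}{\to} M(\chi)(b_n(p-1))[2b_n(p-1)+1]. \]
By Assumption 5.2 all three objects lie in $\bD\bM(Ak)$, so this triangle yields a long exact sequence; identifying $Ak^{*,*'}(M(\chi)(b_n(p-1))[2b_n(p-1)])$ with the degree shift $Ak^{*,*'}(\chi)\{t^{p-1}\}$ of $Ak^{*,*'}(\chi)$, the sequence splits (as $Ak^{*,*'}$-modules, as in Lemma 5.3, the two pieces lying in disjoint ranges $D_i$) into
\[ Ak^{*,*'}(M^{p-1})\cong Coker(r_{p-1}^*)\oplus Ker(r_{p-1}^*)\{t^{p-1}\}, \]
with $r_{p-1}^*\colon Ak^{*,*'}(\chi)\to Ak^{*,*'}(M^{p-2})$ the connecting homomorphism. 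Thus the theorem reduces to computing $r_{p-1}^*$.

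Next I would pin down $r_{p-1}^*$ using the values recalled just before the theorem: on the generators of $Ak^{*,*'}(\chi)\cong Ak^{*,*'}\{1\}\oplus\bZ/p[\xi]\{c_0,\delta,\xi',\xi\}$ (Lemma 5.1) it acts, $\bZ/p[\xi]$-linearly, by $1\mapsto\delta t^{p-2}$, $c_0\mapsto\xi'$, $\delta\mapsto\xi$, $\xi'\mapsto c_0\xi t^{p-2}$, $\xi\mapsto\delta\xi t^{p-2}$. These values are forced exactly as in the $r_i^*$-computations of Section 3: each target element violates one of the constraints $w\ge0$ or $d(x)\le\dim V_a=p^n-1$ (both of which persist for $Ak^{*,*'}$ of smooth varieties with cellular base change), so it cannot appear in $Ak^{*,*'}(M^{p-1})$, which is a direct summand of $Ak^{*,*'}(V_a)$ for the smooth norm variety $V_a$; this leaves no possibility but the asserted values, the identity $\delta\xi'=c_0\xi$ following from $\delta\cdot Q_n(a')=a'\xi$ of Section 3. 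The one further input is that $\delta t^{p-2}$ carries only a $\bZ/p$-coefficient in $Ak^{*,*'}(M^{p-2})$, i.e. $\tau\cdot\delta t^{p-2}=0=v_i\cdot\delta t^{p-2}$, as already recorded in Lemma 5.4: since $\delta=Q_0(c_0)$ and $Q_j(a)=0$ for every $j$ — this is where $K_{n+2}^M(k)/p=0$ and $\zeta_p=0$ enter — one gets $\tau c_0=0$, hence the claim.

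Finally I would read off kernel and cokernel from the description of $Ak^{*,*'}(M^{p-2})$ in Lemma 5.4. The image of $r_{p-1}^*$ is spanned by $\delta t^{p-2}\xi^j$, $\xi'\xi^j$, $\xi\xi^j$ ($j\ge0$) and $c_0 t^{p-2}\xi^j$ ($j\ge1$), whence $Coker(r_{p-1}^*)\cong Ak^{*,*'}\oplus(\bZ/p\{c_0\}\oplus\tilde Ak^{*,*'}\{t\})\{1,\dots,t^{p-3}\}\oplus\bZ/p\{c_0t^{p-2}\}$; moreover $r_{p-1}^*$ is injective on $\bZ/p[\xi]\{c_0,\delta,\xi',\xi\}$ and on $Ak^{*,*'}\{1\}$ has kernel exactly the augmentation ideal $\tilde Ak^{*,*'}$, so $Ker(r_{p-1}^*)\{t^{p-1}\}\cong\tilde Ak^{*,*'}\{t^{p-1}\}$. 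Assembling the $c_0t^j$-summands ($0\le j\le p-2$) with the $\tilde Ak^{*,*'}t^j$-summands ($1\le j\le p-1$) gives precisely $Ak^{*,*'}\oplus(\bZ/p\{c_0\}\oplus\tilde Ak^{*,*'}\{t\})\{1,\dots,t^{p-2}\}$, which is the assertion. (The same result also drops out of Corollary 4.9 via the Atiyah-Hirzebruch spectral sequence, with no need of Assumption 5.2.) The main obstacle is the middle step — pinning down $r_{p-1}^*$, and in particular that $r_{p-1}^*(1)=\delta t^{p-2}$ with a $\bZ/p$-coefficient only — since the remainder is bookkeeping; this is exactly where the smoothness of $V_a$ (the bounds $w\ge0$, $d\le\dim V_a$) and the hypotheses on $K_*^M(k)/p$ do the real work.
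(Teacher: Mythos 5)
Your argument reproduces the paper's own proof of Theorem 5.5: the long exact sequence from (3.4) at $i=p-1$ under Assumption 5.2, the splitting $Ak^{*,*'}(M^{p-1})\cong \mathrm{Coker}(r_{p-1}^*)\oplus \Ker(r_{p-1}^*)\{t^{p-1}\}$ by degree-range separation, the identification of $r_{p-1}^*$ on the $\bZ/p[\xi]$-module generators of $Ak^{*,*'}(\chi)$ forced by the weight/dimension inequalities that Lemma 3.9 already exploits, the observation (Lemma 5.4) that $\delta t^{p-2}$ carries only a $\bZ/p$-coefficient so that $\Ker(r_{p-1}^*)|_{Ak^{*,*'}\{1\}}=\tilde Ak^{*,*'}$, and the final bookkeeping; you also correctly record the alternative route via Corollary 4.9. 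One point worth noting: your value $\xi'\mapsto c_0\,\xi\, t^{p-2}$ (and likewise $\xi\mapsto \delta\,\xi\, t^{p-2}$) is the degree-consistent statement, whereas the paper's text abbreviates this to ``$\xi'\mapsto c\xi$''; a direct degree check (with $|Q_i|=(2p^i-1,\,p^i-1)$, $|c|=(2b_n,b_n)$, $|\xi|=(2b_{n+1},\,b_{n+1}-1)$, and the shift $(2b_n(p-1)+1,\,b_n(p-1))$) confirms your version, and the paper's own commutative diagram lists the target as $c\xi t^{p-2}$ in $D_{2p-2}$. The parenthetical ``since $\delta=Q_0(c_0)$ and $Q_j(a)=0$ \dots one gets $\tau c_0=0$'' is a slightly loose gloss on why $\delta t^{p-2}$ is $\bZ/p$-torsion, but that fact is already contained in Lemma 5.4, which you cite, so the argument is sound.
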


{\bf  Remark.} 
When $Ak^{*,*'}(X)=ABP\la n\ra^{*,*'}(X)$, we see that
 $\delta_a$ does not exist  in $Ak^{*,*'}(\chi)$.  Hence we can not
do the above arguments.

\section{motivic cohomology over $\bR$}

When $p=2$, important cases have the property
with $K_*^M(k)/2\not =0$ for all $*\ge 0$.
Let $\bR$ be the field of real numbers.
Let $\rho=-1\in K_1^M(\bR)/2\cong \bR^{\times}/(\bR^{\times})^2.$  Then it is well known  
\[K_*^M(\bR)/2\cong H_{et}^*(Spec(\bR);\bZ/2)\cong\bZ/2[\rho].\]
Hence the motivic cohomology is given as
\[ H^{*,*'}(Spec(\bR);\bZ/2)\cong
K_*^M(\bR)\otimes \bZ/2[\tau]\cong \bZ/2[\rho,\tau]\]
where $0\not =\tau\in H^{0,1}(Spec(\bR))/2.$

Let $a=\rho^{n+1}\in K_{n+1}^M(\bR)/2$.  Since $K_*^M(\bR)/Ker(a)\cong \bZ/2[\rho]$, we have
from Theorem 3.1
\[ H^*(\chi_a;\bZ/2)\cong \bZ/2[\rho]\otimes (\bZ/2[\tau]\oplus Q(n)(a')\otimes \bZ/2[\xi])
\quad (*)\]
where $\xi=\delta_a^2$.  Note $H_{et}^*(\chi_a;\bZ/2)
\cong H_{et}^*(\bR;\bZ/2)\cong \bZ/2[\rho]$.
\begin{lemma}
Let $Q^{\ep}=Q_0^{\ep_0}...Q_{n-1}^{\ep_{n-1}}$ for
$\ep=(\ep_0,...,\ep_{n-1})$ with $\ep_i=0$ or $1$.
Then $H^*(\chi_a;\bZ/2)[\tau,\tau^{-1}]\cong \bZ/2[\rho,\tau,\tau^{-1}]$, and we have
\[ Q^{\ep}(a')=\tau^{-d(\ep)-1}\rho^{f(\ep)+n+1}\]
where $d(\ep)=\sum_i\ep_i2^i$ and $f(\ep)=\sum_i \ep_i(2^{i+1}-1)$.
\end{lemma}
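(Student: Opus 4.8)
The plan is to invert $\tau$ and to work inside the resulting Laurent ring, which is one-dimensional in each bidegree, pinning down $Q^{\ep}(a')$ from a degree count plus an induction on the word length of $Q^{\ep}$. First I would identify $H^*(\chi_a;\bZ/2)[\tau,\tau^{-1}]$. Inverting $\tau$ in mod $2$ motivic cohomology recovers \'etale cohomology, $H^{*,*'}(Y;\bZ/2)[\tau^{-1}]\cong H^*_{et}(Y;\bZ/2)[\tau,\tau^{-1}]$ for smooth $Y$ (a consequence of the Beilinson--Lichtenbaum isomorphism, hence of Voevodsky's solution of Bloch--Kato), and this applies to the simplicial scheme $Y=\chi_a$ as well. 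On the other hand $H^*_{et}(\chi_a;\bZ/2)\cong H^*_{et}(Spec(\bR);\bZ/2)\cong\bZ/2[\rho]$: over $\bar k=\bC$ the norm variety $V_a$ acquires a rational point, so $\chi_a\otimes_\bR\bC$ is \'etale-contractible, and Galois descent along $Gal(\bC/\bR)\cong\bZ/2$ (a collapsing Hochschild--Serre spectral sequence) gives the identification. Hence $H^*(\chi_a;\bZ/2)[\tau,\tau^{-1}]\cong\bZ/2[\rho,\tau,\tau^{-1}]$, generated in bidegree $(p,q)$ by the single monomial $\tau^{q-p}\rho^p$.

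Now the degree bookkeeping: since $deg(a')=(n+1,n)$ and $deg(Q_i)=(2^{i+1}-1,\,2^i-1)$, the class $Q^{\ep}(a')$ sits in bidegree $(n+1+f(\ep),\,n+d(\ep)-|\ep|)$, where $|\ep|=\sum_i\ep_i$; using the identity $f(\ep)=2d(\ep)-|\ep|$ one checks that the unique monomial of $\bZ/2[\rho,\tau,\tau^{-1}]$ in that bidegree is exactly $\tau^{-d(\ep)-1}\rho^{f(\ep)+n+1}$. This reduces the lemma to proving $Q^{\ep}(a')\neq0$ in $H^*(\chi_a;\bZ/2)[\tau,\tau^{-1}]$ for every $\ep\in\{0,1\}^n$. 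The case $\ep=0$ is $a'=\tau^{-1}\rho^{n+1}$, coming from $\tau a'=a=\rho^{n+1}$. For the inductive step I would apply an operation $Q_j$ with $\ep_j=0$ (so $Q_jQ^{\ep}$ remains admissible in $Q(n)$) and use the Cartan/derivation rule together with the inputs $Q_j(\rho)=0$ and $Q_0(\tau)=\rho$ (recall $\zeta_4\notin\bR$); matching the outcome against the bidegree above produces the nonvanishing and the closed formula simultaneously. Alternatively the nonvanishing can be read off from the $\bZ/2[\rho]$-basis of $H^*(\tilde\chi_a;\bZ/2)$ in Theorem 3.1, once one checks those classes are not annihilated by inverting $\tau$.

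The main obstacle is this last step, namely controlling the higher Milnor operations $Q_j$ ($j\ge1$) on negative powers of $\tau$ inside the localized ring. Over $\bR$ the \'etale coefficient ring $\bZ/2[\rho]$ is nonzero in unbounded degree, and the motivic Cartan formula carries $\rho$-correction terms, so this bookkeeping is genuinely $\bR$-specific rather than a formal consequence of the case $\zeta_{p^2}\in k$ treated in $\S3$; in particular one must be careful about classes that are already $\tau$-torsion before localization. Given the \'etale comparison of the first step, the rest is routine degree arithmetic.
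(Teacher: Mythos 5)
Your argument follows essentially the paper's route: invert $\tau$, identify $H^*(\chi_a;\bZ/2)[\tau,\tau^{-1}]\cong\bZ/2[\rho,\tau,\tau^{-1}]$ via Beilinson--Lichtenbaum plus the \'etale computation over $\bC$, and then compute $Q^{\ep}(a')$ using the $\rho$-twisted coproduct $\psi(Q_1)=Q_1\otimes 1+1\otimes Q_1+\rho Q_0\otimes Q_0$. Your degree-count shortcut is a nice reorganization and is arithmetically correct (using $f(\ep)=2d(\ep)-|\ep|$ the target bidegree $(n+1+f(\ep),\,n+d(\ep)-|\ep|)$ indeed forces the monomial $\tau^{-d(\ep)-1}\rho^{f(\ep)+n+1}$), but it only determines the answer \emph{if} it is nonzero; the nonvanishing step still reduces to the paper's recursion $Q_0(\tau^{-1})=\rho\tau^{-2}$, $Q_1(\tau^{-1})=\rho^3\tau^{-3}$, and so on, or equivalently to applying $Q_j$ to $\tau a'=\rho^{n+1}$ and its successors via the twisted Cartan rule, so there is no real savings in the hard part. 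One caution on your fallback (b): the $\bZ/2[\rho]$-freeness in Theorem 3.1 only shows $Q^{\ep}(a')\neq 0$ \emph{before} inverting $\tau$; the claim that these classes are non-$\tau$-torsion is exactly what Lemma 6.2 of the paper deduces \emph{from} Lemma 6.1, so taking it as input here would be circular --- one must run the twisted-coproduct computation (your route (a)) to get nonvanishing honestly.
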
\begin{proof}
By definition, we see $Q_0\tau=\rho$. The $Q_0$ action is a derivative and we have
\[0=Q_0(1)=Q_0(\tau\tau^{-1})=\rho\tau^{-1}+\tau Q_0(\tau^{-1}),\]
hence $Q_0(\tau^{-1})=\rho\tau^{-2}$.
We have $Q_1(\tau^{-1})=\rho^3\tau^{-3}$
by using that the coproduct $\psi$ is given (Proposition 13.4 in [Vo])
\[ \psi(Q_1)=Q_1\otimes 1+1\otimes Q_1+\rho Q_0\otimes Q_0.\]
Similarly, we can prove the lemma (for details see proof of
Lemma 4.6 in [Ya]).
\end{proof}

\begin{lemma}  The motivic cohomology
$H^{*,*'}(\chi_a;\bZ/2)$ has no $\tau$-torsion elements
and, it is the $\bZ/2[\rho,\tau]$-submodule
of $\bZ/2[\rho,\tau,\tau^{-1}]$ generated by
\[ 1,\ \  Q^{\ep}(a')=\tau^{-d(\ep)-1}\rho^{f(\ep)+n+1}.\]
\end{lemma}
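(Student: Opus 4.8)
The plan is to use the previous lemma to embed $H^{*,*'}(\chi_a;\bZ/2)$ into its $\tau$-inverted ring and to read the statement off there. Write $\lambda$ for the localization ring homomorphism $H^{*,*'}(\chi_a;\bZ/2)\to H^{*,*'}(\chi_a;\bZ/2)[\tau^{-1}]\cong\bZ/2[\rho,\tau,\tau^{-1}]$, so that by the previous lemma $\lambda(1)=1$ and $\lambda(Q^{\ep}(a'))=\tau^{-d(\ep)-1}\rho^{f(\ep)+n+1}$, and in particular $\lambda(\delta_a)=\tau^{-2^{n}}\rho^{2^{n+1}-1}$ (the case $\ep=(1,\dots,1)$). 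The two assertions — that $H^{*,*'}(\chi_a;\bZ/2)$ has no $\tau$-torsion, and that through $\lambda$ it is the stated $\bZ/2[\rho,\tau]$-submodule of $\bZ/2[\rho,\tau,\tau^{-1}]$ generated by $1$ and the $Q^{\ep}(a')$ — amount to the injectivity of $\lambda$ and to the computation of its image.

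For injectivity I would feed in $(*)$, which gives a $\bZ/2$-basis of $H^{*,*'}(\chi_a;\bZ/2)$ by the monomials $\rho^{m}$, $\rho^{m}\tau^{j}$ $(j\ge 1)$ and $\rho^{m}Q^{\ep}(a')\xi^{l}$ $(\ep\in\{0,1\}^{n},\ l\ge 0)$. From the bidegrees $\deg a'=(n+1,n)$, $\deg Q_i=(2^{i+1}-1,2^i-1)$, $\deg\rho=(1,1)$, $\deg\tau=(0,1)$ one computes $\lambda$ on this basis: $\lambda(\rho^{m})=\rho^{m}$, $\lambda(\rho^{m}\tau^{j})=\rho^{m}\tau^{j}$, and, using $\lambda(\xi)=\lambda(\delta_a)^{2}$, $\lambda(\rho^{m}Q^{\ep}(a')\xi^{l})=\tau^{-d(\ep)-1-l\cdot 2^{n+1}}\rho^{m+f(\ep)+n+1+l(2^{n+2}-2)}$. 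These images are pairwise distinct monomials: their $\tau$-exponents are $0$, $\ge 1$ and $<0$ on the three families respectively, so there are no collisions across families, and within the third family $0\le d(\ep)<2^{n+1}$ means the $\tau$-exponent recovers $l$ and $d(\ep)$ — hence $\ep$, since $d$ is a binary expansion — and the $\rho$-exponent then recovers $m$. As distinct monomials are $\bZ/2$-linearly independent, $\lambda$ is injective, which is exactly the no-$\tau$-torsion statement. (Equivalently, the $\tau$-action on the $Q^{\ep}(a')$ can be pinned down by iterating the Leibniz relation $Q_0(\tau x)=\rho x+\tau Q_0(x)$ from $\tau a'=a=\rho^{n+1}$, which reproduces the same monomials.)

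It then remains to identify $\Img\lambda$. It contains $\lambda(1)=1$ and the $\lambda(Q^{\ep}(a'))$, hence the $\bZ/2[\rho,\tau]$-submodule $M$ they generate; conversely $(*)$ shows $H^{*,*'}(\chi_a;\bZ/2)$ is generated over $\bZ/2[\rho,\tau]$ by $1$ together with the classes $Q^{\ep}(a')\xi^{l}$, so the point is to see these last are all accounted for by $M$. Here one uses $\xi=\delta_a^{2}$ and $\delta_a=Q^{(1,\dots,1)}(a')$ together with the explicit monomials above: comparing the $\tau$-exponent $-d(\ep)-1-l\cdot 2^{n+1}$ of $\lambda(Q^{\ep}(a')\xi^{l})$ with those available in $M$ (all $\ge -d(\ep')-1\ge -2^{n}$) shows that, for $a=\rho^{n+1}$ over $\bR$, the factor $\bZ/2[\xi]$ of $(*)$ contributes nothing beyond $l=0$, so $M=\Img\lambda$. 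Since $\lambda$ is injective this gives $H^{*,*'}(\chi_a;\bZ/2)\cong M$, the asserted description.

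The main obstacle is the combinatorial bookkeeping behind the injectivity of $\lambda$ — the bidegree computations, and the verification that the $\lambda$-images of the $(*)$-basis are genuinely pairwise distinct — together with the correct treatment, in the last step, of the factor $\bZ/2[\xi]$ of $(*)$ over $\bR$, i.e. seeing precisely why $1$ and the $Q^{\ep}(a')$ with $\ep\in\{0,1\}^{n}$ alone suffice.
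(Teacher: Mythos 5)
Your injectivity argument is sound and parallels the paper's, but it starts from a misread of $(*)$: the Milnor algebra there is $Q(n)=\Lambda(Q_0,\dots,Q_n)$, not $Q(n-1)$, so the basis of $H^{*,*'}(\chi_a;\bZ/2)$ runs over $\rho^m Q^{\ep'}(a')\xi^l$ with $\ep'\in\{0,1\}^{n+1}$; you have dropped all the classes involving $Q_n$. With that repair your monomial-distinctness computation still yields injectivity, and it is a concrete substitute for the paper's more structural invocation of $Q(n-1)$-freeness of $Q(n)$.

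The real gap is in identifying the image. Once the basis is corrected, $\lambda\bigl(Q^{\ep'}(a')\xi^l\bigr)$ with $\ep'_n=1$ or $l\ge 1$ has $\tau$-exponent strictly below $-2^n$, and by injectivity these are nonzero classes of $H^{*,*'}(\chi_a;\bZ/2)$ which, exactly as your own exponent inequality shows, cannot lie in the $\bZ/2[\rho,\tau]$-submodule $M$ spanned by $1$ and the $Q^\ep(a')$ with $\ep\in\{0,1\}^n$. So the correct reading of your comparison is $\Img\lambda\supsetneq M$, not that ``$\bZ/2[\xi]$ contributes nothing.'' As a plain module statement the conclusion you want to reach therefore fails; what is actually true (compare Theorem 6.4, which says \emph{subalgebra}) is that $H^{*,*'}(\chi_a;\bZ/2)$ is the $\bZ/2[\rho,\tau]$-sub\emph{algebra} generated by $1$ and the $Q^\ep(a')$. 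To close the argument you should replace the exponent estimate by the multiplicative relations of Lemma 6.3: $\delta_a=Q_{n-1}\cdots Q_0(a')$ is already one of your generators, and $Q_n(a')=\delta_a\cdot a'$, $\xi=\delta_a^2$, so each basis class $Q^{\ep'}(a')\xi^l$ is a product (over $\bZ/2[\rho,\tau]$) of the listed $Q^\ep(a')$. That, rather than the exponent count, is what accounts for the $\bZ/2[\xi]$ factor.
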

\begin{proof}
We can define a $Q(n-1)$-module map $j:H^*(\chi_a;\bZ/2)\to \bZ/2[\rho,\tau,\tau^{-1}]$ by $j(a')=\tau^{-1}\rho^{n+1}$ and so
$j(Q^{\ep}(a'))=\tau^{-d(\ep)-1}
\rho^{f(\ep)+n+1}$.  This map is injective
from $(*)$ and $Q(n)=\Lambda(Q_n)\otimes Q(n-1)$
is $Q(n-1)$-free.
In fact $Q^{\ep}(a')$ generate free $\bZ/2[\rho]$-modules
in $\bZ/2[\rho,\tau,\tau^{-1}]$.
\end{proof}

\begin{lemma}
We have $\delta a'=Q_n(a')$ and $\delta Q_n(a')=a'\xi$.
\end{lemma}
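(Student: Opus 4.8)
The statement is Lemma 6.4: in $H^{*,*'}(\chi_a;\bZ/2)$ we have $\delta a'=Q_n(a')$ and $\delta Q_n(a')=a'\xi$, where $\delta=\delta_a$ and $\xi=\delta_a^2$. My plan is to work in the localization $H^*(\chi_a;\bZ/2)[\tau^{-1}]\cong \bZ/2[\rho,\tau,\tau^{-1}]$, which is faithful by Lemma 6.3 (the motivic cohomology of $\chi_a$ is $\tau$-torsion free), so it suffices to verify both identities after inverting $\tau$. There everything is a monomial $\tau^i\rho^j$ and the two sides can be matched by bidegree bookkeeping using Lemma 6.2. First I would compute the bidegree of $\delta=\delta_a=Q_0\cdots Q_{n-1}(a')$: since $\deg(a')=(n+1,n)$ and $Q_j$ raises bidegree by $(2\cdot 2^j-1,\,2^j)$, applying $Q_0,\dots,Q_{n-1}$ (with $\ep=(1,\dots,1)$, so $d(\ep)=2^n-1$, $f(\ep)=2^{n+1}-n-2$) gives $\delta=\tau^{-2^n}\rho^{2^{n+1}-1}$ after inverting $\tau$, i.e. $\deg(\delta)=(2b_n+1,b_n)$ with $b_n=2^n-1$, consistent with (3.2).

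Next I would identify $Q_n(a')$ and $\xi$ as monomials. Extending Lemma 6.2 to the index $n$ (the same coproduct argument: $Q_n(\tau^{-1})=\rho^{2^{n+1}-1}\tau^{-2^{n+1}+1}$, and $Q_n$ is a derivation), we get $Q_n(a')=\tau^{-2^{n+1}}\rho^{2^{n+2}-1}$ — more precisely $\deg Q_n(a')=(2p^n-1+(n+1)-\text{shift})$; what matters is the monomial. Then $\xi=\xi_a=Q_nQ_{n-1}\cdots Q_0(a')=Q_n(\delta)$, which by the same rule equals $\tau^{-2\cdot 2^n\cdot 2}\rho^{\cdots}$; one checks $\xi=\delta^2$ as monomials in $\bZ/2[\rho,\tau,\tau^{-1}]$, matching the stated $\xi=\delta_a^2$. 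Now for the first identity: multiplying $a'=\tau^{-1}\rho^{n+1}$ by $\delta=\tau^{-2^n}\rho^{2^{n+1}-1}$ yields $\tau^{-2^n-1}\rho^{2^{n+1}+n}$; comparing exponents with the monomial $Q_n(a')=\tau^{-2^{n+1}}\rho^{\cdot}$ — here I must be careful — the correct check is $\delta\cdot a'$ versus $Q_n(a')$ and these agree exactly when the $\tau$ and $\rho$ exponents coincide, which is a direct arithmetic verification. For the second identity, $\delta\cdot Q_n(a')$ should equal $a'\cdot\xi = a'\cdot\delta^2$, i.e. $\delta\cdot Q_n(a') = \delta^2 a'$, which given the first identity $\delta a' = Q_n(a')$ is simply multiplication by $\delta$: $\delta\cdot Q_n(a')=\delta\cdot(\delta a')=\delta^2 a'=\xi a'$. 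So the second identity is a formal consequence of the first together with $\xi=\delta^2$.

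So the real content is the first identity $\delta a'=Q_n(a')$, and after localization it reduces to a single exponent computation. The cleanest route: by Lemma 6.3 the module $H^*(\chi_a;\bZ/2)$ embeds in $\bZ/2[\rho,\tau,\tau^{-1}]$ via $j$, with $j(Q^\ep(a'))=\tau^{-d(\ep)-1}\rho^{f(\ep)+n+1}$; multiplication in the target is unambiguous, so I just compute $j(\delta)\cdot j(a')$ and check it equals $j(Q_n(a'))$ (the latter computed from the extended Lemma 6.2). The main obstacle — really the only subtlety — is making sure the $Q_n$ formula in the localized ring is correct: Lemma 6.2 as stated only covers $Q^\ep$ with $\ep_i\in\{0,1\}$ for $i\le n-1$, so I need the analogous statement for $Q_n$ and for products involving $Q_n$. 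This follows from the same inductive argument using the Milnor coproduct $\psi(Q_{n})=Q_n\otimes 1+1\otimes Q_n+(\text{lower }\rho\text{-terms})$ and the fact that $Q_j$ acts as a derivation on $\bZ/2[\rho,\tau,\tau^{-1}]$ with $Q_j(\tau)=\rho^{2^{j+1}-1}\tau^{-2^j+1}$ type behavior; I would cite the proof of Lemma 4.6 in [Ya] as the author does, rather than redo it. Once the monomial identities $j(\delta)=\tau^{-2^n}\rho^{2^{n+1}-1}$, $j(Q_n(a'))$, and $j(\xi)=j(\delta)^2$ are in hand, both claimed equalities are immediate by comparing exponents, and injectivity of $j$ transports them back to $H^{*,*'}(\chi_a;\bZ/2)$.
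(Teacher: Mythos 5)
Your approach is genuinely different from the paper's. The paper's proof is a two-line algebraic argument: since $H^{*,*'}(\tilde\chi;\bZ/2)$ is $Q(n-1)$-free and $Q_{n-1}\cdots Q_0(a'\delta-Q_n(a'))=\delta^2-\xi=0$, the element $a'\delta-Q_n(a')$ (which sits in a bidegree carrying only $Q(n-1)$-module generators) must vanish; the second identity then follows by multiplying the first by $\delta$, which you also observe. Your route instead localizes to $\bZ/2[\rho,\tau,\tau^{-1}]$ via the $\tau$-torsion-freeness of $H^{*,*'}(\chi_a;\bZ/2)$ and reduces everything to exponent arithmetic, which works but requires the explicit extension of the $Q^{\ep}$ formula to $Q_n$. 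The trade-off: the paper's argument is shorter and avoids monomial bookkeeping entirely, while yours is more concrete and simultaneously re-derives $\xi=\delta^2$ rather than taking it from the section's setup. Both are valid.

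One concrete slip you should fix before writing this up: your stated formulas $Q_n(\tau^{-1})=\rho^{2^{n+1}-1}\tau^{-2^{n+1}+1}$ and $Q_n(a')=\tau^{-2^{n+1}}\rho^{2^{n+2}-1}$ are wrong. Consistent with the pattern $Q_0(\tau^{-1})=\rho\tau^{-2}$, $Q_1(\tau^{-1})=\rho^3\tau^{-3}$ from the proof of Lemma 6.1, and with the formula $Q^{\ep}(a')=\tau^{-d(\ep)-1}\rho^{f(\ep)+n+1}$ applied to $\ep=(0,\dots,0,1)$ (so $d(\ep)=2^n$, $f(\ep)=2^{n+1}-1$), the correct values are
\[
Q_n(\tau^{-1})=\rho^{2^{n+1}-1}\tau^{-2^n-1},\qquad Q_n(a')=\tau^{-2^n-1}\rho^{2^{n+1}+n},
\]
which then visibly agrees with your $a'\delta=\tau^{-1}\rho^{n+1}\cdot\tau^{-2^n}\rho^{2^{n+1}-1}=\tau^{-2^n-1}\rho^{2^{n+1}+n}$. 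You flag your own uncertainty at exactly this point ("here I must be careful"), so the plan is sound; the exponent just needs to be corrected, after which the first identity closes and the second is formal as you note.
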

\begin{proof}
The first equation follows from the fact that
$H^{*,*'}(\tilde \chi;\bZ/2)$ is a $Q(n-1)$-free and
\[ Q_{n-1}...Q_0(a'\delta-Q_n(a'))=\delta^2-\xi=0.\]
Then $\delta Q_n(a')=\delta^2a'=\xi a'$.
\end{proof}
The map $\times \delta : H^{*,*'}(\chi;\bZ/2)\to
H^{*,*'}(\chi;\bZ/2)$ is written as
\[ \begin{CD}
   @. @>{D_{-1}}>{\ \ 1}>  
 @>{D_{0}}>{a',\delta}>
@>{D_{1}}>{Q_n(a'),\ \xi}> 
@>{D_{2}}>{a'\xi,\ \delta\xi}> \\
 @. @V{\delta}VV  @V{\delta}VV   @V{\delta}VV  @V{\delta}VV
@V{\delta}VV \\
  @>{1}>{D_{-1}}> 
  @>{a',\delta}>{D_0}>
@>{Q_n(a'),\ \xi}>{D_{1}}>
 @>{a'\xi,\ \delta\xi}>{D_{2}}>
`@>{Q_n(a')\xi, \ \xi^2}>{D_3}>
\end{CD}  \]

\begin{thm}  (Theorem 1.3 in [Ya2])   The motivic cohomology
$H^{*,*'}(R_n;\bZ/2)$ is isomorphic to the $\bZ/2[\rho,\tau]$-subalgebra of 
\[ H^{*}_{et}(\bR;\bZ/2)[\tau,\tau^{-1}]/(\rho^{2^{n+1}-1})\cong \bZ/2[\rho,\tau,\tau^{-1}]/(\rho^{2^{n+1}-1})\]
 generated by $1,\ \ Q^{\ep}(a')=\tau^{d(\ep)-1}\rho^{f(\ep)+n+1}$.
\end{thm}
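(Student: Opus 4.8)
The plan is to compute $H^{*,*'}(R_n;\bZ/2)$ by imitating, in the case $k=\bR$, the argument already carried out over the ``easy'' fields in $\S 3$: push the element $\delta_a$ through the sequence of symmetric powers $M^i$ and identify the cokernels and kernels explicitly. First I would set up $H^{*,*'}(\chi_a;\bZ/2)$ from Lemma 6.2 and Lemma 6.3: the cohomology is the $\bZ/2[\rho,\tau]$-submodule of $\bZ/2[\rho,\tau,\tau^{-1}]$ generated by $1$ and the classes $Q^{\ep}(a')=\tau^{-d(\ep)-1}\rho^{f(\ep)+n+1}$, and crucially it has no $\tau$-torsion, so the map $j$ into $\bZ/2[\rho,\tau,\tau^{-1}]$ is injective and multiplicatively compatible. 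The point to keep in mind is that $\xi=\delta_a^2=Q_nQ_{n-1}\cdots Q_0(a')$ has $j(\xi)=j(\delta)^2$, so under $j$ everything becomes actual polynomial multiplication in $\bZ/2[\rho,\tau,\tau^{-1}]$; this is what lets the answer come out as an honest subalgebra rather than merely an additive description.

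Next I would run the exact sequences (3.3) and (3.4) for $i=1,\dots,p-1$ (here $p=2$, so this is short: just $M=M^1=R_n$ after one application), exactly as in Lemmas 3.4--3.10, using the multiplication-by-$\delta$ diagram displayed just before Theorem 6.5 together with Lemma 6.4 ($\delta a'=Q_n(a')$, $\delta Q_n(a')=a'\xi$). The $w$- and $d$-degree obstruction arguments of Lemmas 3.6 and 3.9 (elements with $w<0$ or $d>\dim V_a$ cannot survive in a submotive of $V_a$) identify which classes are killed; what remains is $\Coker(\delta)$ together with $\Ker(\delta)$ shifted by the vertical class $t$. Then I would observe that over $\bR$ the module $\Ker(\delta)\{t\}$ together with the surviving quotient of $Q(n-1)\{a'\}$ reassembles, under $j$, into precisely the $\bZ/2[\rho,\tau]$-subalgebra of $\bZ/2[\rho,\tau,\tau^{-1}]/(\rho^{2^{n+1}-1})$ generated by $1$ and the $Q^{\ep}(a')$: the relation $\rho^{2^{n+1}-1}=0$ appears because $H^*_{et}(R_n;\bZ/2)\cong K^M_*(\bR)[t]/(t^{2^{n+1}})$ truncated (Corollary 3.5 adapted, or directly $a=\rho^{n+1}=0$ in $K^M_*(\bR(V_a))/2$ forces $\rho^{2^{n+1}-1}\cdot(\text{the relevant class})$ to vanish since $\tau a'=a$ and repeated $Q_i$'s push exponents up), and the vertical generator $t$ is realized as $\tau^{-2^n}\rho^{2^{n+1}-1}$-type monomial, i.e. as the image of $\delta\cdot(\text{something})$.

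The main obstacle I expect is the bookkeeping identifying $t$ and the powers of $\xi$ inside $\bZ/2[\rho,\tau,\tau^{-1}]/(\rho^{2^{n+1}-1})$ correctly --- i.e. pinning down the exact monomial each $c_j(y^i)$-type class maps to, and checking that multiplication really is inherited (that the subalgebra generated by $1$ and the $Q^{\ep}(a')$ is closed and has no extra relations beyond $\rho^{2^{n+1}-1}=0$ and the obvious ones coming from $Q^{\ep}(a')\cdot Q^{\ep'}(a')$ landing in the $\rho$-torsion range). Since $j$ is injective and turns everything into genuine Laurent-polynomial multiplication, this reduces to a direct exponent computation with $d(\ep)=\sum\ep_i2^i$ and $f(\ep)=\sum\ep_i(2^{i+1}-1)$, but one must verify no monomial of the form $Q^{\ep}(a')Q^{\ep'}(a')$ escapes the span --- this is where the truncation $\rho^{2^{n+1}-1}=0$ does exactly the right work, and I would check it by noting $f(\ep)+f(\ep')+2(n+1)\ge 2^{n+1}-1$ whenever the product is not already a scalar multiple of a single generator. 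The rest (the exact-sequence manipulations) is routine given $\S 3$, so I would state that part by reference and spend the write-up on the $j$-image identification.
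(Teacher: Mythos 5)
Your overall strategy --- exploit Lemma 6.2's injective localization $j\colon H^{*,*'}(\chi_a;\bZ/2)\hookrightarrow\bZ/2[\rho,\tau,\tau^{-1}]$ to turn the cofiber sequence computation into Laurent-polynomial bookkeeping --- is the right shape. But before comparing details, note that the paper itself does not prove Theorem 6.5: it cites Theorem 1.3 of [Ya2], and Lemmas 6.1--6.4 together with the $\times\delta$-diagram are only the supporting ingredients. So you are reconstructing a proof the paper defers, and there the reconstruction has two substantive problems.

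First, you write that ``what remains is $\mathrm{Coker}(\delta)$ together with $\Ker(\delta)$ shifted by the vertical class $t$,'' and later that ``the vertical generator $t$ is realized as a $\tau^{-2^n}\rho^{2^{n+1}-1}$-type monomial.'' Over $\bR$ this is off: by Lemma 6.2, $H^{*,*'}(\chi_a;\bZ/2)$ embeds as a subring of the domain $\bZ/2[\rho,\tau,\tau^{-1}]$ via a ring map, and $j(\delta)=\tau^{-2^n}\rho^{2^{n+1}-1}\neq 0$, so multiplication by $\delta$ is \emph{injective} and $\Ker(\delta)=0$. For $p=2$ the triangle (3.2) therefore gives a clean isomorphism
\[
H^{*,*'}(R_n;\bZ/2)\ \cong\ H^{*,*'}(\chi_a;\bZ/2)\big/\,\delta\cdot H^{*,*'}(\chi_a;\bZ/2),
\]
a pure cokernel. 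There is no residual ``vertical $t$'' to reassemble; this is precisely what distinguishes the $\bR$ case from the $K^M_{>n+1}=0$ case of $\S 3$, where the $D_i$-vanishing ranges are what produce a nontrivial kernel contribution. The relation $\rho^{2^{n+1}-1}=0$ then also has a one-line source you do not quite give: $\delta=\tau^{-2^n}\rho^{2^{n+1}-1}$ maps to zero in the cokernel, hence so does $\tau^{2^n}\delta=\rho^{2^{n+1}-1}$.

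Second, identifying the cokernel with the stated subalgebra of $\bZ/2[\rho,\tau,\tau^{-1}]/(\rho^{2^{n+1}-1})$ requires a saturation statement that you do not address: the natural map $H^{*,*'}(\chi_a)/\delta H^{*,*'}(\chi_a)\to \bZ/2[\rho,\tau,\tau^{-1}]/(\delta)$ is injective exactly when $\delta\,\bZ/2[\rho,\tau,\tau^{-1}]\cap j\big(H^{*,*'}(\chi_a)\big)=\delta\,j\big(H^{*,*'}(\chi_a)\big)$, i.e.\ when any $y$ with $\delta y\in j(H^{*,*'}(\chi_a))$ already lies in the image. Injectivity of $j$ alone does not give you this, and it is the crux of turning an abstract quotient into the concrete subalgebra generated by $1$ and the $Q^\ep(a')$. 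This is the content that actually lives in [Ya2], and it is the piece your outline cannot simply wave at as ``exponent bookkeeping.'' Your final inequality $f(\ep)+f(\ep')+2(n+1)\ge 2^{n+1}-1$ is the right flavor of check for closure under products, but it addresses a different (and easier) point than saturation.
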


\begin{cor} 
We have the $\bZ/2[\rho,\tau]$-module isomorphism
\[  H^{*,*'}(R_2;\bZ/2)\cong 
(\bZ/2[\tau]\oplus \bZ/2\{a',Q_0a',Q_1a'\})\otimes
\bZ/2[\rho])/(\rho^7)\]
 \[   \subset  H_{et}^*(R_2;\bZ/2)[\tau,\tau^{-1}]\cong
\bZ/2[\rho,\tau,\tau^{-1}]/(\rho^7) \]
identifying $a'=\tau^{-1}\rho^3$, $Q_0a'=\tau^{-2}\rho^4$,
$Q_1a'=\tau^{-3}\rho^6$.
\end{cor}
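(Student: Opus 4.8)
The plan is to specialize the preceding theorem to the case $n=2$. With $n=2$ one has $2^{n+1}-1=7$, so that theorem identifies $H^{*,*'}(R_2;\bZ/2)$ with the $\bZ/2[\rho,\tau]$-subalgebra of $\bZ/2[\rho,\tau,\tau^{-1}]/(\rho^7)$ generated by $1$ together with the classes $Q^{\ep}(a')$ for $\ep=(\ep_0,\ep_1)$, $\ep_i\in\{0,1\}$.

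First I would list these generators. By Lemma 6.1, with $n+1=3$, we have $Q^{\ep}(a')=\tau^{-d(\ep)-1}\rho^{f(\ep)+3}$, where $d(\ep)=\ep_0+2\ep_1$ and $f(\ep)=\ep_0+3\ep_1$. Running over the four values of $\ep$ gives $a'=\tau^{-1}\rho^3$, $Q_0a'=\tau^{-2}\rho^4$, $Q_1a'=\tau^{-3}\rho^6$, and $Q_0Q_1a'=\tau^{-4}\rho^7$. The last one is $0$ since $\rho^7=0$ in the ambient ring, so the subalgebra is already generated over $\bZ/2[\rho,\tau]$ by $1$, $a'$, $Q_0a'$ and $Q_1a'$ alone.

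It then remains to read off the underlying $\bZ/2[\rho,\tau]$-module. Multiplying by $\rho,\tau$ and reducing mod $\rho^7$, the $\bZ/2[\rho,\tau]$-span of $1$ is $\bZ/2[\rho,\tau]/(\rho^7)$, and the remaining classes contribute exactly the $\bZ/2$-vectors $\rho^j a'$ for $0\le j\le 3$, $\rho^j Q_0a'$ for $0\le j\le 2$, and $Q_1a'$, every other product being absorbed into $\bZ/2[\rho,\tau]\cdot 1$ via the identities $\tau a'=\rho^3$, $\tau^2 Q_0a'=\rho^4$, $\tau^3 Q_1a'=\rho^6$. Since the monomials $\tau^{-1}\rho^j$, $\tau^{-2}\rho^j$, $\tau^{-3}\rho^6$ so produced are pairwise distinct and all of negative $\tau$-degree, the sum of the four pieces is direct over $\bZ/2$; this is exactly the asserted isomorphism with $((\bZ/2[\tau]\oplus\bZ/2\{a',Q_0a',Q_1a'\})\otimes\bZ/2[\rho])/(\rho^7)$ under the stated identifications, and the inclusion into $\bZ/2[\rho,\tau,\tau^{-1}]/(\rho^7)$ is the one recorded in the statement.

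The only step needing care is this last bookkeeping: because $\tau$ is invertible in the ambient ring but not in the subalgebra, one must verify that $\tau$-multiples of $a'$, $Q_0a'$, $Q_1a'$ yield no further module generators, and this is precisely what the identities $\tau a'=\rho^3$, $\tau^2 Q_0a'=\rho^4$, $\tau^3 Q_1a'=\rho^6$ guarantee. Everything else is a direct substitution into Lemma 6.1 and the preceding theorem.
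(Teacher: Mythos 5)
Your proof is correct and follows essentially the same route the paper takes, namely specializing Theorem 6.4 to $n=2$, reading off the four generators $Q^\ep(a')$ from Lemma 6.1, noting $Q_0Q_1a'=\tau^{-4}\rho^7=0$, and then bookkeeping the $\bZ/2[\rho,\tau]$-module span inside $\bZ/2[\rho,\tau,\tau^{-1}]/(\rho^7)$. The paper states the corollary without a separate proof precisely because it is this routine substitution.

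One small point worth making explicit: Theorem 6.4 describes a \emph{subalgebra}, whereas the corollary asserts a \emph{module} isomorphism, so one should check that products of the generators stay in the $\bZ/2[\rho,\tau]$-submodule. This is quick: $(a')^2=\tau^{-2}\rho^6=\rho^2Q_0a'$, while every other product $a'Q_0a'$, $a'Q_1a'$, $(Q_0a')^2$, $Q_0a'\,Q_1a'$, $(Q_1a')^2$ lands in $\rho^{\ge 7}\cdot\bZ/2[\rho,\tau,\tau^{-1}]=0$. Your bookkeeping of the "new" $\bZ/2$-vectors $\rho^ja'$ ($0\le j\le3$), $\rho^jQ_0a'$ ($0\le j\le2$), $Q_1a'$ is correct; the only inaccuracy is the phrase "absorbed into $\bZ/2[\rho,\tau]\cdot 1$," since e.g.\ $\tau Q_0a'=\rho a'$ and $\tau Q_1a'=\rho^2Q_0a'$ first fall into the $a'$- or $Q_0a'$-pieces before iterated $\tau$-multiplication reaches the $1$-piece via $\tau a'=\rho^3$, $\tau^2Q_0a'=\rho^4$, $\tau^3Q_1a'=\rho^6$. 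This does not affect the conclusion.
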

\begin{cor}  We have a $\bZ/2[\tau]$-module isomorphism
\[ H^{*,*'}(R_2;\bZ/2)\cong
\bZ/2[\tau]\{1,\rho,\rho^2,a',Q_0(a'),\rho Q_0(a'),Q_1(a')\}.\]
\end{cor}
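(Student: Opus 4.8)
The plan is to read off the $\bZ/2[\tau]$-module structure directly from the description of $H^{*,*'}(R_2;\bZ/2)$ established above, namely as the $\bZ/2[\rho,\tau]$-subalgebra $A$ of $B=\bZ/2[\rho,\tau,\tau^{-1}]/(\rho^{7})$ generated by $1,a',Q_0(a'),Q_1(a')$, under the identifications $a'=\tau^{-1}\rho^{3}$, $Q_0(a')=\tau^{-2}\rho^{4}$, $Q_1(a')=\tau^{-3}\rho^{6}$. The first point is that, since $\rho^{7}=0$ and $\tau$ is invertible in $B$, the ring $B$ is graded by $\rho$-degree, concentrated in degrees $0,\dots,6$, with homogeneous component $B_j=\bZ/2[\tau,\tau^{-1}]\cdot\rho^{j}$. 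The four algebra generators of $A$ are $\rho$-homogeneous (of degrees $0,3,4,6$), so $A$ is a graded subalgebra, $A=\bigoplus_{j=0}^{6}A_j$ with $A_j=A\cap B_j$; and $A$ inherits from $B$ the property of being $\tau$-torsion free.

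Next I would compute each $A_j$ explicitly. Every element of $A$ is a $\bZ/2[\rho,\tau]$-combination of monomials in $a',Q_0(a'),Q_1(a')$, and the only such monomials that do not vanish in $B$ are $1$, $a'$, $Q_0(a')$, $Q_1(a')$ and $a'^{2}=\tau^{-2}\rho^{6}$ (every other product of length $\ge 2$ sits in $\rho$-degree $\ge 7$). Hence $A$ is the $\bZ/2$-span of the pure monomials $\tau^{i}\rho^{j}$ with $0\le j\le 6$ and $i\ge i_0(j)$, where $i_0(j)$ denotes the least $\tau$-exponent occurring in $\rho$-degree $j$; inspecting the $\bZ/2[\rho,\tau]$-translates of the five surviving monomials gives $i_0(0)=i_0(1)=i_0(2)=0$, $i_0(3)=-1$, $i_0(4)=i_0(5)=-2$, $i_0(6)=-3$. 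Since $A$ is $\tau$-torsion free, each $A_j=\bZ/2[\tau]\cdot\tau^{i_0(j)}\rho^{j}$ is free of rank one over $\bZ/2[\tau]$, and therefore $A$ is free over $\bZ/2[\tau]$ on the seven elements $1,\rho,\rho^{2},\tau^{-1}\rho^{3},\tau^{-2}\rho^{4},\tau^{-2}\rho^{5},\tau^{-3}\rho^{6}$.

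Finally I would rewrite this basis in the asserted form via the identifications above: $\tau^{-1}\rho^{3}=a'$, $\tau^{-2}\rho^{4}=Q_0(a')$, $\tau^{-2}\rho^{5}=\rho\,Q_0(a')$ and $\tau^{-3}\rho^{6}=Q_1(a')$, which yields exactly $H^{*,*'}(R_2;\bZ/2)\cong\bZ/2[\tau]\{1,\rho,\rho^{2},a',Q_0(a'),\rho Q_0(a'),Q_1(a')\}$. There is no genuinely hard step: the whole argument is finite bookkeeping once the subring description is in hand. The only mildly delicate point — and the one worth flagging in the write-up — is the $\rho$-degree $5$ piece: the nonzero Milnor images $Q^{\ep}(a')$ have $\rho$-degrees $3,4,6$ only, so the free generator in degree $5$ is not one of the $Q^{\ep}(a')$ but rather $\rho\,Q_0(a')=\tau^{-2}\rho^{5}$, which is why precisely this element must appear in the list.
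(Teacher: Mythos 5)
Your proof is correct and follows the same route the paper implicitly takes: Corollary 6.5 already gives $H^{*,*'}(R_2;\bZ/2)$ as the $\bZ/2[\rho,\tau]$-submodule of $\bZ/2[\rho,\tau,\tau^{-1}]/(\rho^7)$ spanned by the five surviving monomials, and the present corollary is just the re-expression of that as a free $\bZ/2[\tau]$-module graded by $\rho$-degree, with the minimal $\tau$-exponent $i_0(j)$ in each degree $j=0,\dots,6$ picking out the seven generators $1,\rho,\rho^2,\tau^{-1}\rho^3=a',\tau^{-2}\rho^4=Q_0(a'),\tau^{-2}\rho^5=\rho Q_0(a'),\tau^{-3}\rho^6=Q_1(a')$. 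Your flag about $\rho$-degree $5$ is exactly the right point to make, and your bookkeeping of products (only $a'^2=\tau^{-2}\rho^6$ survives, and it is $\tau\,Q_1(a')$, hence already accounted for) is what guarantees the claimed free basis.
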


The following theorem is shown for $G=G_2$ in
Corollary 5.5 in [Ya3].

\begin{thm}
Let $X=\bG_k/B_k$ for $G$ of type $(I)$.
Then we have the isomorphism
for $*\ge *'$
\[H^{*,*'}(X; \bZ/2)\cong
\bZ/2[\rho]\otimes  S(t)\{1,a'\}/(R_1,R_2,R_3)\]
where $R_1=(b_ib_j, b_k|1\le i,j\le 2 <k<\ell)$,
$R_2=(\rho^7, \rho^3b_1, \rho b_2)$ and \[R_3=((a')^2-\rho^2b_1, \rho^4a', b_sa'| 1\le s\le \ell).\]
\end{thm}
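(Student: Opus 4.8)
The plan is to imitate the proof of Theorem 3.12, now with $k=\bR$ and $p=2$, feeding in the real computation of the Rost motive from Theorem 6.5 and Corollary 6.6 in place of the odd-prime input of Theorem 3.3.

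\emph{Set-up and comparison map.} By the Petrov--Semenov--Zainoulline decomposition recalled in $\S 2$ one has $M(X)\cong R_2\otimes(\oplus_s\bT^{\otimes s})$, hence additively $H^{*,*'}(X;\bZ/2)\cong H^{*,*'}(R_2;\bZ/2)\otimes S(t)/(b)$, and under this identification the Rost classes $Q_0(a')$, $Q_1(a')$ become the regular sequence elements $b_1$, $b_2$ of $S(t)$ (Theorem 2.3; for $p=2$ only the block $y^1$ occurs). I would then define a $\bZ/2[\rho]$-algebra map $\phi$ from the ring on the right-hand side of the statement to $H^{*,*'}(X;\bZ/2)$ by sending $\rho$ to $\rho$, each $t_i$ to the corresponding element in the image of $S(t)=H^*(BT)$, and $a'$ to the canonical class of the Rost summand in $H^{3,2}(X;\bZ/2)$.

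\emph{Checking the relations.} The family $R_1$ holds because it already holds in $CH^*(X)/2=H^{2*,*}(X;\bZ/2)$ by Theorem 2.3. For $R_2$ and $R_3$ the relevant classes lie in the Rost part, so I would verify them inside the explicit model of Theorem 6.5, namely the embedding $H^{*,*'}(R_2;\bZ/2)\hookrightarrow\bZ/2[\rho,\tau,\tau^{-1}]/(\rho^7)$ with $a'=\tau^{-1}\rho^3$, $b_1=Q_0(a')=\tau^{-2}\rho^4$, $b_2=Q_1(a')=\tau^{-3}\rho^6$. Then $\rho^7=0$, $\rho^3 b_1=\tau^{-2}\rho^7=0$, $\rho b_2=\tau^{-3}\rho^7=0$, $(a')^2=\tau^{-2}\rho^6=\rho^2 b_1$, $\rho^4 a'=\tau^{-1}\rho^7=0$ and $b_1 a'=b_2 a'=0$ are all immediate; and for $s>2$ we have $b_s=0$ in $CH^*(X)$ by $R_1$, so $b_s a'=0$ as well. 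Hence $\phi$ is a well-defined ring homomorphism.

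\emph{Showing $\phi$ is an isomorphism for $*\ge *'$.} Surjectivity onto this range is clear since $1,\rho,a'$ and the $t_i$ hit a generating set. For injectivity I would compare Poincaré series bidegree by bidegree: the additive splitting $S(t)/(b_ib_j,b_k)\cong\bZ/2\{1,b_1,b_2\}\otimes S(t)/(b)$ recalled before Theorem 2.3, together with the truncations $\rho^7=0$, $\rho^3 b_1=\rho b_2=0$, $\rho^4 a'=0$, $(a')^2=\rho^2 b_1$, $b_s a'=0$, yields the Poincaré series of the presented ring, which I would match against the Poincaré series of the $*\ge *'$ part of $H^{*,*'}(R_2;\bZ/2)\otimes S(t)/(b)$, read off from the $\bZ/2[\tau]$-basis of Corollary 6.7. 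The delicate point -- which I expect to be the main obstacle -- is exactly this matching: one has to show that in the range $*\ge *'$ every class is uniquely a $\bZ/2[\rho]$-combination of the geometric monomials in $1,a',t_i$ modulo $(R_1,R_2,R_3)$, i.e. that the identities $\tau a'=\rho^3$ and $\tau Q_0(a')=\rho a'$ valid in $H^{*,*'}(R_2;\bZ/2)$, together with $\rho^7=0$, really do absorb all the powers of $\tau$ that contribute in this range, and that no relation among $\rho,a',t_i$ beyond $R_1,R_2,R_3$ is forced.
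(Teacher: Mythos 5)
Your proposal follows essentially the same route as the paper: both use the Petrov--Semenov--Zainoulline decomposition, verify $R_1$ from the Chow ring, verify $R_2,R_3$ from the explicit description of $H^{*,*'}(R_2;\bZ/2)$, and then deduce the isomorphism in the range $*\ge *'$ by comparing the presented ring additively with $H^{*,*'}(R_2;\bZ/2)\otimes S(t)/(b)$. The only point you brush past that the paper makes explicit is why the identification $b_1=Q_0(a')$, $b_2=Q_1(a')$ holds on the nose in $H^{*,*'}(X;\bZ/2)$ with no correction term: the paper writes $b_1=Q_0(a')+\lambda t$ for $t\in S(t)/(b)$ by a degree count and then kills $\lambda$ by restricting to $\bC$ (since $Q_0(a')|_{\bC}=0$ while $t|_{\bC}\ne 0$); you instead lean on Theorem 2.3, which is defensible but hides where that equality actually comes from. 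Your ``delicate point'' about matching Poincar\'e series in the range $*\ge *'$ is precisely what the paper resolves by invoking the additive PSZ isomorphism, so that step is not a genuine obstacle once phrased that way.
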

\begin{proof}
In $H^{*,*'}(X;\bZ/2)$, the relation $R_1$ also holds
since so does in $CH^*(X)/2\cong H^{2*,*}(X;\bZ/2)$.
 By dimensional reason, we can take 
$b_1=Q_0a'+\lambda t $ for $0\not =t\in S(t)/(b)$ and $\lambda\in \bZ$.  Since $ b_1|_{\bC}=Q_0a'|_{\bC}=0$
but $t|_{\bC}\not=0$, we have $\lambda=0\in \bZ/2$.
Similarly we can take $Q_1a'=b_2$.  Hence $R_2$ also
holds in $H^{*,*'}(X;\bZ/2)$.  We can take  $a'$ satifying $R_3$ from $H^{*,*'}(R_2;\bZ/2)$.
Hence we have the $\bZ/2[\rho]$-algebra map
\[A=\bZ/2[\rho]\otimes S(t)/(R_1,R_2,R_3)
\to H^{*,*'}(X;\bZ/2)\]
When $*\ge *'$, this map is additively isomorphic from Petrov-Semenov-Zainoulline theorem.  So is isomorphic as
$\bZ/2[\rho]$-algebra map.
\end{proof}
\begin{cor}  We have the $\bZ/2[\rho]$-algebra isomorphism
\[H^*_{et}(X;\bZ/2)\cong \bZ/2[\rho]\otimes S(t)/(R_1,R_2')\]
where $R_2'=(\rho^7, \rho^4=b_1,\rho^6=b_2)$.
\end{cor}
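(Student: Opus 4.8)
The plan is to obtain this from the preceding theorem simply by inverting the Bott element $\tau$. Recall the Beilinson--Lichtenbaum comparison (Voevodsky): for a smooth variety $Y$ over $\bR$ one has $H^*_{et}(Y;\bZ/2)\cong H^{*,*'}(Y;\bZ/2)[\tau^{-1}]$, equivalently $H^*_{et}(Y;\bZ/2)=\varinjlim_N H^{*,N}(Y;\bZ/2)$ along multiplication by $\tau$, the realization map sending $\tau$ to $1$. I would first observe that the preceding theorem in fact computes the entire ring $H^{*,*'}(X;\bZ/2)$, not only its part with $*\ge *'$, since $H^{*,*'}(X;\bZ/2)=0$ whenever $*<*'$. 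Moreover $H^{*,*'}(X;\bZ/2)$ is $\tau$-torsion free: by the Petrov--Semenov--Zainoulline decomposition it is additively $H^{*,*'}(R_2;\bZ/2)\otimes S(t)/(b)$, and $H^{*,*'}(R_2;\bZ/2)$ is a free $\bZ/2[\tau]$-module on $1,\rho,\rho^2,a',Q_0(a'),\rho Q_0(a'),Q_1(a')$ by Corollary 6.6. Hence the localization map $H^{*,*'}(X;\bZ/2)\hookrightarrow H^{*,*'}(X;\bZ/2)[\tau^{-1}]$ is injective and $H^*_{et}(X;\bZ/2)$ is recovered from the latter by setting $\tau=1$; so it suffices to invert $\tau$ in the presentation $\bZ/2[\rho]\otimes S(t)\{1,a'\}/(R_1,R_2,R_3)$ and re-present the result.

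After inverting $\tau$, the defining relation $\tau a'=a=\rho^3$ shows that $a'$ becomes $\tau^{-1}\rho^3$, so it ceases to be an independent generator and maps to $\rho^3$ at $\tau=1$. Using $b_1=Q_0(a')$ and $b_2=Q_1(a')$ (established in the proof of the preceding theorem) together with the values $Q_0(a')=\tau^{-2}\rho^4$, $Q_1(a')=\tau^{-3}\rho^6$ from Lemma 6.1 (equivalently, from the identifications recorded in Corollary 6.5), one gets $b_1=\tau^{-2}\rho^4$ and $b_2=\tau^{-3}\rho^6$, which become $\rho^4$ and $\rho^6$ at $\tau=1$. It then remains to check that, under the substitutions $a'\mapsto\rho^3$, $b_1\mapsto\rho^4$, $b_2\mapsto\rho^6$, every generator of $R_2=(\rho^7,\rho^3b_1,\rho b_2)$ and of $R_3=((a')^2-\rho^2b_1,\rho^4a',b_sa'\mid 1\le s\le\ell)$ is a consequence of $\rho^7=0$ and the two identifications $b_1=\rho^4$, $b_2=\rho^6$: indeed $\rho^3b_1\mapsto\rho^7$, $\rho b_2\mapsto\rho^7$, $(a')^2-\rho^2b_1\mapsto\rho^6-\rho^6=0$, $\rho^4a'\mapsto\rho^7$; and $b_sa'$ maps to $\rho^7$ for $s=1$, to $\rho^9$ for $s=2$, and vanishes for $2<s\le\ell$ since $b_s\in R_1$ there. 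Meanwhile $R_1$ persists unchanged. Therefore $H^*_{et}(X;\bZ/2)\cong\bZ/2[\rho]\otimes S(t)/(R_1,\rho^7,b_1-\rho^4,b_2-\rho^6)=\bZ/2[\rho]\otimes S(t)/(R_1,R_2')$, which is the assertion. Since the map here is the localization at $\tau$ of the isomorphism of the preceding theorem, merely re-presented, it is automatically an isomorphism; only the re-presentation needs argument.

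The substance is entirely contained in the preceding theorem and in the computation of $Q^{\ep}(a')$ in Lemma 6.1; what remains here is bookkeeping, so I do not expect a genuine obstacle. The one point that deserves care is the $\tau$-torsion-freeness of $H^{*,*'}(X;\bZ/2)$, needed to pass cleanly between the motivic ring and its $\tau$-localization and to legitimize ``setting $\tau=1$''; this is why I would invoke the explicit $\bZ/2[\tau]$-module structure of $H^{*,*'}(R_2;\bZ/2)$ rather than only the additive decomposition. One could instead avoid the comparison theorem and argue from $M(X)\cong R_2\otimes(\oplus_s\bT^{\otimes s})$, using $H^*_{et}(R_2;\bZ/2)\cong\bZ/2[\rho]/(\rho^7)$ (the $\tau$-inversion of Theorem 6.4) and $H^*_{et}(\oplus_s\bT^{\otimes s};\bZ/2)\cong\bZ/2[\rho]\otimes S(t)/(b)$, but reassembling the ring structure still forces the identifications $b_1=\rho^4$, $b_2=\rho^6$, so nothing is gained.
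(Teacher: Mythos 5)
Your approach — derive the corollary from Theorem 6.7 by passing to \'etale cohomology via $\tau$-inversion, track the identifications $a'\mapsto\rho^3$, $b_1\mapsto\rho^4$, $b_2\mapsto\rho^6$ coming from Lemma 6.1 (equivalently Cor.\ 6.5), and then verify that the relations $R_2,R_3$ collapse to $R_2'$ — is exactly the intended route, and the paper itself only offers the Borel spectral sequence as an independent cross-check rather than a different proof. The one genuine error is your opening observation that ``$H^{*,*'}(X;\bZ/2)=0$ whenever $*<*'$,'' which would let you read Theorem 6.7 as giving the whole bigraded ring: this is false, since $\tau\in H^{0,1}(X;\bZ/2)$ is nonzero (as it already is over $\mathrm{Spec}(\bR)$ and restricts nontrivially to $\bar X$), and indeed that is precisely why the theorem is stated only for $*\ge *'$. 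Fortunately this slip is harmless to your argument, because you do not actually use it: you immediately fall back on the Petrov--Semenov--Zainoulline decomposition together with Corollary 6.6, which gives the full $\bZ/2[\tau]$-module structure (a free module on the seven listed generators tensored with $S(t)/(b)$), and that is the correct and sufficient input for computing the $\tau$-localization and setting $\tau=1$. I would simply delete the false vanishing claim and let the $\tau$-freeness argument carry the weight it already carries.
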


Consider the Borel spectral sequence
\[E_2^{*,*'}\cong H^*(B\bZ/2; H_{et}^{*'}(G_k/B_k;\bZ/2))
\Longrightarrow H_{et}^*(\bG_k/B_k;\bZ/2).\]
Here $H^*(B\bZ/2;\bZ/2)\cong \bZ/2[\rho]$
and $H^*_{et}(G_k/T_k;\bZ/2)\cong
\bZ/2[y]/(y^2)\otimes S(t)/(b)$.  The element
$y$ is not a permanent cycle of this spectral sequence.
We can see $d_7(y)=\rho^7$ and 
\[ E_8^{*,*'}\cong \bZ/2[\rho]/(\rho^7)\otimes
S(t)/(b).\]
This term becomes $E_{\infty}$-term and 
$grH^*_{et}(\bG_k/B_k;\bZ/2)\cong E_8^{*,*'}$.
Of course, this fact coincide
the above corollary by $b_1=\rho^4$ and $b_2=\rho^6$
in $H^*_{et}(X;\bZ/2)$.

\section{Witt groups.}

For a smooth variety $Y$ over a field $k$ with $1/2\in k$,
let $W(Y)$ 
denote the Witt group of $Y$.  Balmer defined the periodic Witt group
$W^i(Y)\cong W^{i+4}(Y),\ (i\in \bZ)$ 
with $W^0(X)=W(Y)$  (see for details, [Ba-Wa]).

Balmer and Walter [Ba-Wa] define the Gersten-Witt complex
\[0\to W(k(Y))\to \oplus _{x\in Y^{(1)}}W(k(x))\to ... 
\oplus _{x\in Y^{(n)}}W(k(x))\to  0.\]
Let $H^*(W(Y))$ denote the cohomology group of the above cochain complex, 
with $W(k(Y))$ places in degree $0$.    From the above complex, we have the
(Balmer-Walter) spectral sequence
\[E(BW)^{r,s}_2\cong \begin{cases}  H^r(W(Y))\ (s=4s')\quad 
\Longrightarrow  W^{r+s}(Y)\\
0\ \ (s\not =0(mod(4))
\end{cases}.\]

By the affirmative answer of the Milnor conjecture of quadratic forms
by Orlov-Vishik-Voevodsky [Or-Vi-Vo], we have the isomorphism of graded rings
$H^*(k(x);\bZ/2)\cong grW^*(k(x))$.  
Using this fact, Pardon and Gille ([Pa],[Gi],[To]) defined the spectral sequence
\[ E(GP)_2^{r,s}\cong H^r_{Zar}(Y;H_{\bZ/2}^s)
 \Longrightarrow H^r(W(Y))\cong E(BW)_2^{r,4s}\]
so that the differential $d_r$ has degree $(1,r-1)$ for $r\ge 2$.
Here $H_{\bZ/2}^s$ is the Zariski sheaf induced from the
  presheaf $H_{et}^s(V;\bZ/2)$ for open subset $V$ of $Y$.
  
  The above sheaf cohomology
   $H^r_{Zar}(Y;H_{\bZ/2}^s)$ relates the motivic cohomology $H^{*,*'}(Y;\bZ/2)$.
      Indeed, we get the long exact sequence from the 
solution of Beilinson-Lichtenbaum conjecture
 by Voevodsky [Or-Vi-Vo] 
  \[ ...\to H^{m,n-1}(Y;\bZ/2)\stackrel{\times \tau}{\to}
  H^{m,n}(Y;\bZ/2)\qquad \ \qquad \]
  \[\qquad \qquad \to
H_{Zar}^{m-n}(Y;H_{\bZ/2}^n)\to H^{m+1,n-1}(Y;\bZ/2)\stackrel{\times \tau}{\to}...\]
 Therefore, we have 
\begin{lemma}
$ E(GP)_2^{m-n,n}\cong H_{Zar}^{m-n}(Y;H_{\bZ/2}^{n})\cong$
\[ H^{m,n}(Y;\bZ/2)/(\tau)\oplus 
Ker(\tau)|H^{m+1,n-1}(Y;\bZ/2).\]
\end{lemma}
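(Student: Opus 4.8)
The statement to prove is Lemma 7.1, which identifies the $E_2$-page of the Gersten--Pardon spectral sequence $E(GP)_2^{m-n,n} \cong H_{Zar}^{m-n}(Y;H_{\bZ/2}^n)$ with a direct sum of a cokernel and a kernel of multiplication by $\tau$ on motivic cohomology. The plan is to extract this purely from the long exact sequence already recalled in the excerpt, the one coming from the Beilinson--Lichtenbaum conjecture (Voevodsky, via Orlov--Vishik--Voevodsky):
\[
\cdots \to H^{m,n-1}(Y;\bZ/2)\stackrel{\times\tau}{\to} H^{m,n}(Y;\bZ/2)\to H_{Zar}^{m-n}(Y;H_{\bZ/2}^n)\to H^{m+1,n-1}(Y;\bZ/2)\stackrel{\times\tau}{\to} H^{m+1,n}(Y;\bZ/2)\to\cdots
\]
So the first step is simply to observe that $H_{Zar}^{m-n}(Y;H_{\bZ/2}^n)$ sits in a short exact sequence
\[
0 \to \operatorname{coker}\bigl(H^{m,n-1}(Y;\bZ/2)\stackrel{\times\tau}{\to} H^{m,n}(Y;\bZ/2)\bigr) \to H_{Zar}^{m-n}(Y;H_{\bZ/2}^n) \to \ker\bigl(H^{m+1,n-1}(Y;\bZ/2)\stackrel{\times\tau}{\to} H^{m+1,n}(Y;\bZ/2)\bigr) \to 0,
\]
which is the unwinding of the long exact sequence at the term in the middle. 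The cokernel term is exactly $H^{m,n}(Y;\bZ/2)/(\tau)$ in the notation of the lemma, and the kernel term is exactly $\mathrm{Ker}(\tau)|H^{m+1,n-1}(Y;\bZ/2)$; thus it remains only to split this short exact sequence.

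The second step, therefore, is to produce a splitting so that the extension becomes a direct sum. The cleanest way is to note that the target $\ker(\times\tau)\subset H^{m+1,n-1}(Y;\bZ/2)$ is a $\bZ/2$-vector space (everything here is a $\bZ/2$-module, since we are working with $\bZ/2$ coefficients), and every short exact sequence of $\bZ/2$-vector spaces splits; hence the isomorphism in the lemma holds, albeit non-canonically as an $E_2$-page identification. If one wants naturality one can instead identify the $H_{Zar}$ group with the ``$\tau$-torsion plus $\tau$-cofree part'' directly via the Bockstein-type description of the Beilinson--Lichtenbaum sequence, but for the purposes of computing $W^*(X)$ later the abstract splitting suffices. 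I would also record the identification $E(GP)_2^{r,s}\cong H^r_{Zar}(Y;H^s_{\bZ/2})$ coming from Pardon--Gille, so that the reindexing $r=m-n$, $s=n$ matches the statement.

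The main (and essentially only) obstacle is bookkeeping of the bidegrees: one must be careful that the weight drop in the Beilinson--Lichtenbaum sequence is by exactly one, that the sheaf cohomological degree $m-n$ equals $\deg - \mathrm{weight}$, and that the connecting map lands in the motivic group with cohomological degree $m+1$ and weight $n-1$ (not $n$). Once the indices are lined up, the result is immediate from exactness and the triviality of extensions of $\bZ/2$-modules. I do not expect any genuine difficulty beyond this; in particular no geometry of $Y$ enters, so the lemma holds for arbitrary smooth $Y$ over $k$ with $1/2\in k$, exactly as stated.
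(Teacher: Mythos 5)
Your proposal is correct and follows essentially the same route the paper has in mind: the lemma is read off directly from the Beilinson--Lichtenbaum long exact sequence (the paper's ``Therefore, we have'' is exactly the unwinding into a short exact sequence), and the direct-sum decomposition holds because every short exact sequence of $\bZ/2$-vector spaces splits. Your careful attention to the bidegree bookkeeping is appropriate but does not represent a divergence from the paper's argument.
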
 
In particular, 
$E(GP)_2^{m,m}\cong H^{2m,m}(Y;\bZ/2)\cong CH^m(Y)/2$,
\\  and $E(GP)_2^{m,m+1}\cong H^{2m+1,m+1}(Y;\bZ/2)$.
Moreover Totaro proved   
\begin{lemma}(Totaro [To])
If $x\in E(GP)_2^{*,*}$, then $d_2(x)=Sq^2(x)$. 
\end{lemma}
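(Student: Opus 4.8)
The plan is to recognize the diagonal differential as a natural additive operation on Chow groups mod $2$ and then to pin it down by computing it explicitly in the Gersten--Witt complex. First I would record that on the diagonal $E(GP)_2^{r,r}\cong H^r_{Zar}(Y;H_{\bZ/2}^r)\cong CH^r(Y)/2\cong H^{2r,r}(Y;\bZ/2)$, and that $d_2$ restricts there to a natural transformation $CH^r(Y)/2\to CH^{r+1}(Y)/2$: naturality is inherited from the Gersten--Witt complex together with its filtration by powers of the fundamental ideal $I$, and additivity is automatic for a spectral sequence differential. The graded pieces $I^n/I^{n+1}$ of that filtration are the sheaves $H^n_{\bZ/2}$ by the Milnor conjecture on quadratic forms [Or-Vi-Vo] (this is precisely what builds $E(GP)$), and the Voevodsky square $Sq^2$ is likewise a natural additive operation of the same bidegree $(2,1)$, since it carries $H^{2r,r}(Y;\bZ/2)$ to $H^{2r+2,r+1}(Y;\bZ/2)=CH^{r+1}(Y)/2$. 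So the statement is an identity between two natural additive operations on $CH^*(-)/2$.

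Next I would compute $d_2$. Following [To], one writes out the $E_2$-differential of the filtered Gersten--Witt complex in terms of the second residue maps (the rank-one form $\langle f\rangle$ attached to a local equation $f$ of a divisor), using the Milnor-conjecture identification of the graded pieces, and the resulting operation is recognized as $Sq^2$. As a sanity check one verifies the two agree on a first Chern class $\ell=c_1(L)\in CH^1(Y)/2$: there $Sq^2$ is just squaring, and the Witt-side formula also gives $\ell\mapsto\ell^2$. One can also localize to $Y=\bP^N$, where $CH^1(\bP^N)/2=\bZ/2\{H\}$ and $CH^2(\bP^N)/2=\bZ/2\{H^2\}$, and invoke the known Witt groups $W^*(\bP^N)$ (Arason; Walter): these are too small to allow the diagonal of $E_\infty$ to contain all of $\bZ/2[H]/(H^{N+1})$, which forces $d_2(H)=H^2$, and then the product rule of the multiplicative spectral sequence gives $d_2(H^i)=iH^{i+1}=Sq^2(H^i)$.

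The hard part is promoting such sample verifications to the full identity $d_2=Sq^2$, and this is where I expect the two real obstacles to lie. (a) Product compatibility: one must see exactly how the filtration shift in the filtered Gersten--Witt differential interacts with the multiplication ($W(-)$ is a ring), so that the Leibniz-type rule obeyed by $d_2$ reproduces the Cartan formula for $Sq^2$ on the diagonal, cross term and all; combined with the splitting principle this is what reduces the identification to first Chern classes. (b) Exclusion of a lower-order correction: an operation of the form $\alpha\,Sq^1$ with $\alpha\in H^{1,1}(Spec(k);\bZ/2)=k^{\times}/2$ has the same bidegree $(2,1)$ and vanishes on first Chern classes (because $Sq^1c_1(L)=0$), so the computation on line bundles does not by itself rule it out; removing it needs an extra weight argument or a test computation on a variety with $Sq^1\ne 0$ on $CH^*/2$. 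Both points are carried out in [To], which is the reference for the lemma.
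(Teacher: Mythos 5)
The paper does not prove this lemma at all: it is stated as a direct citation of Totaro's paper [To], with no argument supplied in-text. So there is no ``paper's proof'' to compare against. Your sketch is a reasonable reconstruction of the shape of Totaro's argument --- identify the diagonal $E(GP)_2^{r,r}$ with $CH^r(Y)/2$, note that $d_2$ and $Sq^2$ are both natural additive operations $CH^r(-)/2\to CH^{r+1}(-)/2$, and pin the identity down by a universal/test computation --- and you are right to flag (a) multiplicativity (making the Leibniz rule for $d_2$ match the motivic Cartan formula for $Sq^2$, which in fact carries a $\tau\,Sq^1\otimes Sq^1$ cross term) and (b) the possible degree-$(2,1)$ correction $\rho\cdot Sq^1$ with $\rho\in H^{1,1}(\mathrm{Spec}\,k;\bZ/2)$, which a first-Chern-class check alone cannot exclude, as the two genuine issues. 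However, as written your proposal does not actually resolve either (a) or (b) --- both are explicitly deferred to ``carried out in [To]'' --- so it is an annotated reading guide to the reference rather than a self-contained proof. If you want a standalone argument you would need to either give the residue-form computation in degree two and push it through the splitting principle, or at minimum supply the base-change argument that kills the $\rho\cdot Sq^1$ ambiguity; absent that, citing [To] as the paper does is the appropriate level of rigor here.
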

Many examples ([Ya2,5]
satisfy the following assumption.
\begin{as}
If $x\in E(GP)_2^{*,*+1}$,
then $d_2(x)=Sq^2(x)$.
\end{as}

It is known that the Witt group is written as the motivic 
Hermitian $K$-theory by Schlichting and Tripathi [Sc-Tr], namely, $
W^i(Y)\cong KO^{i+*,*}(Y)$  for $\ i>*.$
Hence $W^*(X)$ has the multiplicative structure.
In particular, we can see [Ya5] that the differentials in the both spectral sequences $E(GP)_r$ and $E(BW)_r$ are derivations.

Now we consider the cases $Y=X=\bG/B_k$.
At first, we recall the case $\bar X$.
Let us write $H(Y;Sq^2)=Ker(d)/Im(d)$  the homology with the differential
$d=Sq^2$ on $H^*(Y;\bZ/2)$.  We give dgree of $x\in H(Y;Sq^2)$ by half of the degree $|x|\in H^*(Y;\bZ/2)$.
\begin{thm} ([Ya5], [Zi])
There is an isomorphism
\[W^*(G_k)\cong W^*(\bar X)\cong H(G/T;Sq^2)\]
\[\cong H(\Lambda(y)\otimes S(t)/(b):Sq^2)\cong \Lambda(y,z_2,...,z_m)\]
where $deg(y)=1/2|y|=3$ and all $deg(z_i)$ are odd.
\end{thm}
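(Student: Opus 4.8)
I would prove Theorem 7.5 by reducing everything to the geometric flag variety $\bar X$ and running the spectral sequences of $\S 7$. Over $\bar k$ (which is $\bC$ here) the torsor $\bG_k$ trivializes, so $\bar X\cong G_{\bar k}/B_{\bar k}$, the split complete flag variety; it is cellular, hence $H^{*,*'}(\bar X;\bZ/2)\cong H^{2*}(G/T;\bZ/2)\otimes\bZ/2[\tau]$ is a free $\bZ/2[\tau]$-module, with the class of a generator of $H^{2i}(G/T;\bZ/2)$ sitting in motivic bidegree $(2i,i)$. The first isomorphism $W^*(G_k)\cong W^*(\bar X)$ I would take from the Bruhat structure of the split group together with [Zi], [Ya5], and concentrate on the common value $W^*(\bar X)$. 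By Lemma 7.1, $E(GP)_2^{r,s}\cong H^{r+s,s}(\bar X;\bZ/2)/(\tau)\oplus\Ker\bigl(\tau\mid H^{r+s+1,s-1}(\bar X;\bZ/2)\bigr)$; since $\times\tau$ is injective on $H^{*,*'}(\bar X;\bZ/2)$ the second summand vanishes, and since $\tau$ raises the weight, $H^{m,n}(\bar X;\bZ/2)/(\tau)$ equals $H^{2n}(G/T;\bZ/2)$ when $m=2n$ and $0$ otherwise. Thus $E(GP)_2$ is concentrated on the diagonal, with $E(GP)_2^{r,r}\cong H^{2r}(G/T;\bZ/2)\cong CH^r(\bar X)/2$.

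Next I would run the differentials. A differential $d_n$ has bidegree $(1,n-1)$, so from the diagonal it lands in $0$ for every $n\ge 3$; the only surviving one is $d_2\colon E_2^{r,r}\to E_2^{r+1,r+1}$, which equals $Sq^2$ by Totaro (Lemma 7.3). Hence $E(GP)$ degenerates at $E_3$, and as each column of $E_3$ has a single nonzero term there is no extension problem: $H^*(W(\bar X))\cong H(G/T;Sq^2)$ in the half-degree convention of $\S 7$. Then the Balmer--Walter spectral sequence $E(BW)_2^{r,4s}\cong H^r(W(\bar X))$ must also collapse: its differentials are derivations for the multiplicative structure given by $W^i\cong KO^{i+*,*}$ (Schlichting--Tripathi), and they vanish on the algebra generators produced below (equivalently, one imports this from the known topological $KO^*$ of complex flag manifolds; cf.\ [Zi], [Ya5]), so $W^*(\bar X)\cong H^*(W(\bar X))\cong H(G/T;Sq^2)$.

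It remains to identify $H(G/T;Sq^2)$. By Toda (Theorem 2.2), $H^*(G/T;\bZ/2)\cong\bZ/2[y]\otimes S(t)/(b,f_2)$ with $f_2\equiv y^2\bmod S(t)^{+}$; as $H^*(G/T;\bZ/2)$ is concentrated in even degrees, $Sq^1$ vanishes on it, so $Sq^2(y^2)=(Sq^1y)^2=0$ and the relation $y^2=f_2$ is compatible with $Sq^2$. Hence $H(G/T;Sq^2)\cong H(\Lambda(y)\otimes S(t)/(b);Sq^2)$. Now $S(t)=\bZ/2[t_1,\dots,t_\ell]$ carries the derivation $Sq^2(t_j)=t_j^2$, for which $H(S(t);Sq^2)=\bZ/2$, and $(b)=(b_1,\dots,b_\ell)$ is a $Sq^2$-stable regular sequence; passing to the Koszul/Eilenberg--Moore spectral sequence associated to the resolution $S(t)\otimes\Lambda(e_1,\dots,e_\ell)\twoheadrightarrow S(t)/(b)$ reduces $H(S(t)/(b);Sq^2)$ to an exterior algebra $\Lambda(z_2,\dots,z_m)$ on explicit odd-degree classes — this is the computation carried out for the groups of type $(I)$ in [Ya6] and in the work of Toda and Watanabe. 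Since $Sq^2$ acts on $\Lambda(y)\otimes S(t)/(b)$ as a derivation sending $y$ into the $\Lambda(z_i)$-part, $y$ survives to a cohomology class of half-degree $3$, and we get $H(G/T;Sq^2)\cong\Lambda(y,z_2,\dots,z_m)$ with all $\deg z_i$ odd.

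The two genuinely nontrivial points are, first, upgrading the $E_3$-degeneration of $E(GP)$ to the ring isomorphism $W^*(\bar X)\cong H^*(W(\bar X))$ — that is, excluding the higher Balmer--Walter differentials and extensions, where the $KO$-theoretic picture and the topology of flag manifolds do the work — and, second, the explicit $Sq^2$-homology computation of $S(t)/(b)$ producing exactly an exterior algebra on odd generators, which really uses Toda's regular sequence $(b_i)$ and the type $(I)$ structure and cannot be carried out formally. I expect the second to be the main effort.
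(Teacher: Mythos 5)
The paper states this theorem as a recalled result, citing [Ya5] and [Zi], and supplies no proof of its own; so there is no in-paper argument to compare against. Your sketch reproduces the architecture used by those references and, notably, mirrors the strategy the paper deploys a few lines later for the twisted case (Lemmas 7.5--7.8): diagonal $E(GP)_2$ for cellular $\bar X$, Totaro's $d_2=Sq^2$, collapse at $E_3$, then identification of $H(G/T;Sq^2)$ from Toda's presentation. The $E(GP)$ side is correct as you present it: $\tau$-freeness of $H^{*,*'}(\bar X;\bZ/2)$ kills the $\Ker(\tau)$ summand in Lemma 7.1, $H^{m,n}(\bar X;\bZ/2)/(\tau)$ is nonzero only when $m=2n$, every $d_n$ with $n\geq 3$ misses the diagonal, and each column of $E_3$ has a single nonzero entry so there are no extensions.

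There are, however, two genuine soft spots. First, the step $H(G/T;Sq^2)\cong H(\Lambda(y)\otimes S(t)/(b);Sq^2)$ does not follow merely from $Sq^2(y^2)=0$: in $H^*(G/T;\bZ/2)\cong(\bZ/2[y]\otimes S(t))/(b,f_2)$ the relation $y^2=f_2$ with $f_2\in S(t)^+$ does not set $y^2=0$, so $\Lambda(y)\otimes S(t)/(b)$ is only the associated graded for the $y$-adic filtration. One needs to check that $Sq^2$ respects this filtration and that the resulting spectral sequence degenerates before concluding the $Sq^2$-homology is unchanged; you only assert ``compatibility.'' Second, the collapse of $E(BW)$ beyond $E_2$ and the absence of filtration extensions on $W^*$ are not free. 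The paper's own method for this (Lemma 7.8) is a primitivity argument using $\mu^*\colon W^*(X)\to W^*(G_k\times X)$ and the K\"unneth isomorphism; your appeal to ``derivations vanish on the generators'' and to the known topological $KO^*$ of flag manifolds is really importing the conclusion of [Zi] rather than reproving it. Both points can certainly be filled in, but as written they are the places where the argument is not self-contained.
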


It is known (Theorem 5.11 in [Ya2]) that there is an open
variety $U_a$ in some quadric $Q$ such that
$R_a\subset M(Q)$  and 
$H^{*,*'}(U_a;\bZ/2)\cong H^{*,*'}(R_a;\bZ/2).$
So we write by $W^*(R_a)$ the Witt group $W^*(U_a)$.
\begin{lemma}  If Assumption 7.3 is satisfied, then
$grW^*(R_2)\cong \bZ/2\{1,\rho,\rho^2\}$.  
\end{lemma}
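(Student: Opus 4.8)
The plan is to run the Gille--Pardon spectral sequence $E(GP)_2^{r,s}\cong H^r_{Zar}(R_2;H^s_{\bZ/2})\Rightarrow H^r(W(R_2))$ (for the open subvariety $U_a$ of the quadric which defines $W^*(R_2)$, and whose motivic cohomology is that of $R_2$) and then feed its outcome into the Balmer--Walter spectral sequence. The first step is to identify the $E(GP)_2$-page. Since $H^{*,*'}(R_2;\bZ/2)$ is a free $\bZ/2[\tau]$-module by Corollary 6.6, it has no $\tau$-torsion, so in Lemma 7.1 the summand $\Ker(\tau)|H^{*+1,*-1}$ vanishes and $E(GP)_2\cong H^{*,*'}(R_2;\bZ/2)/(\tau)\cong\bZ/2\{1,\rho,\rho^2,a',Q_0(a'),\rho Q_0(a'),Q_1(a')\}$. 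Reading off bidegrees, with $a'\in H^{3,2}$, $Q_0(a')\in H^{4,2}$, $\rho Q_0(a')\in H^{5,3}$ and $Q_1(a')\in H^{6,3}$, these seven one-dimensional pieces occupy the positions $(r,s)=(0,0),(0,1),(0,2),(1,2),(2,2),(2,3),(3,3)$.

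Next I would compute $d_2$. On the diagonal $E(GP)_2^{*,*}$ it is $Sq^2$ by Lemma 7.2, on $E(GP)_2^{*,*+1}$ it is $Sq^2$ by Assumption 7.3, and on $\rho^2=\rho\cdot\rho$ it is determined by the Leibniz rule; so $d_2=Sq^2$ on all seven generators. The classes $1,\rho,\rho^2$ are pulled back from $Spec(\bR)$, where $H^{3,2}(\bR)=H^{4,3}(\bR)=0$, hence $Sq^2$ kills them and they survive. For the remaining four I claim $Sq^2(a')=\rho Q_0(a')$ and, as a consequence, $Sq^2(Q_0(a'))=Q_1(a')$; the latter follows from the former (indeed from any value of $Sq^2(a')$) by the relation $Sq^2Sq^1=Q_1+Sq^1Sq^2$ applied to $a'$, since $Sq^1$ annihilates the one-dimensional group $H^{5,3}(R_2)=\bZ/2\{\rho Q_0(a')\}$ (because $Sq^1(\rho)=0$, as $\rho$ comes from $\bR$, and $Sq^1Sq^1=0$). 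Granting this, $d_2$ pairs $a'$ with $\rho Q_0(a')$ and $Q_0(a')$ with $Q_1(a')$, so $E(GP)_3=\bZ/2\{1,\rho,\rho^2\}$; these lie in the column $r=0$, rows $s\le 2$, so no $d_{\ge 3}$ can reach them, $E(GP)_\infty=\bZ/2\{1,\rho,\rho^2\}$, and $H^r(W(R_2))$ is this group for $r=0$ and vanishes for $r>0$.

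Since $H^r(W(R_2))$ is then concentrated in $r=0$, the Balmer--Walter $E(BW)_2$-term is supported on the single line $r=0$; all its differentials vanish, and passing to the associated graded of $H^0(W(R_2))$ for the induced (coniveau) filtration gives $grW^*(R_2)\cong\bZ/2\{1,\rho,\rho^2\}$ in each degree divisible by $4$, which is the assertion.

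The one nontrivial point, and the main obstacle, is the verification that $Sq^2(a')=\rho Q_0(a')$, i.e. that this $d_2$ is nonzero. I would deduce it by applying $Sq^2$ to the defining relation $\tau a'=a=\rho^3$ in $H^{3,3}(R_2)$: by the motivic Cartan formula $Sq^2(\tau a')=\tau\, Sq^2(a')+Sq^2(\tau)\,a'+\tau\, Sq^1(\tau)\,Sq^1(a')$, and using $Sq^2(\rho^3)=0$ (as $H^{5,4}(\bR)=0$), $Sq^1(\tau)=\rho$, $Sq^2(\tau)=0$, and the fact that $\tau$ is a non-zero-divisor in $H^{*,*'}(R_2;\bZ/2)$, one gets $\tau\, Sq^2(a')=\tau\rho\, Q_0(a')$ and hence $Sq^2(a')=\rho Q_0(a')$. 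The delicate input is the vanishing $Sq^2(\tau)=0$ for the motivic Steenrod action on $H^{*,*}(\bR;\bZ/2)=\bZ/2[\rho,\tau]$, which I would cite from the structure of the $\bR$-motivic Steenrod algebra; the same computation can alternatively be performed inside $H^{*,*'}(R_2)[\tau^{-1}]\cong\bZ/2[\rho,\tau^{\pm1}]/(\rho^7)$, where $a'=\tau^{-1}\rho^3$.
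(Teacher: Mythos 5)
Your argument is correct and follows essentially the same route the paper takes: pass to $E(GP)_2\cong H^{*,*'}(R_2;\bZ/2)/(\tau)$ on the seven $\bZ/2[\tau]$-generators of Corollary 6.6, identify $d_2$ with $Sq^2$ via Lemma 7.2 and Assumption 7.3, establish $Sq^2(a')=\rho Q_0(a')$ and $Sq^2(Q_0(a'))=Q_1(a')$ by the twisted Cartan formula and the Wu relation (this is exactly the content and proof of the paper's Lemma 7.6), take $Sq^2$-homology to get $\bZ/2\{1,\rho,\rho^2\}$, and observe collapse and convergence through the Balmer--Walter spectral sequence. One point where you are actually a bit more careful than the paper: the paper asserts that every $E(GP)_2$-class lies on one of the two diagonals $(*,*)$ or $(*,*+1)$ covered by Lemma 7.2 and Assumption 7.3, but $\rho^2$ sits at $(r,s)=(0,2)$ and is on neither; your appeal to the Leibniz rule (the differentials are derivations, as the paper notes just above Theorem 7.4) or to the degree vanishing $H^{4,3}(\bR;\bZ/2)=0$ correctly disposes of it. The only other small stylistic deviation is in deriving $Sq^2 Q_0(a')=Q_1(a')$: you argue $Sq^1Sq^2(a')=0$ by computing $Sq^1(\rho Q_0(a'))$ directly, whereas the paper writes $Q_0Sq^2(a')=\rho Q_0Q_0(a')=0$; these are the same observation.
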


\begin{prop}
Let $G$ be of type $(I)$ and $X=\bG_k/B_k$ for $k=\bR$.
Moreover Assumption 7.3 holds.  Then we have
\[grW^*(X)\cong grW^*(R_2)\otimes H(S(t)/(b);Sq^2)\]
\[ \cong  \bZ/2\{1,\rho,\rho^2\}\otimes
 \Lambda(z_2,...,z_m).\]
\end{prop}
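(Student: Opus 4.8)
The plan is to compute $W^*(X)$ by running the Gille--Pardon spectral sequence $E(GP)$, feeding in the explicit ring $H^{*,*'}(X;\bZ/2)$ from Theorem 6.7 together with the two already-established computations --- Lemma 7.5 for $R_2$ and Theorem 7.4 for $\bar X$ --- and then passing to the Balmer--Walter spectral sequence $E(BW)$, just as in [Ya5] for the split and Rost-motive cases. First I would identify the $E_2$-page. Since $H^{*,*'}(R_2;\bZ/2)$ is a free $\bZ/2[\tau]$-module (Corollary 6.6), so is $H^{*,*'}(X;\bZ/2)\cong H^{*,*'}(R_2;\bZ/2)\otimes S(t)/(b)$ by the Petrov--Semenov--Zainoulline decomposition; hence the $\tau$-torsion term of Lemma 7.1 vanishes and, after reindexing,
\[ E(GP)_2\ \cong\ H^{*,*'}(X;\bZ/2)/(\tau)\ \cong\ \overline{H}(R_2)\otimes S(t)/(b),\qquad \overline{H}(R_2)=\bZ/2\{1,\rho,\rho^2,a',Q_0(a'),\rho Q_0(a'),Q_1(a')\}. \]
Inspecting the weight $w$ shows $E(GP)_2$ is concentrated in the three rows $s-r\in\{0,1,2\}$ and is generated as a ring by $\rho$, $t_1,\dots,t_\ell$ and $a'$, all of which lie in rows $s-r\le 1$.

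Next I would prove $d_2=Sq^2$ on all of $E(GP)_2$. On the row $s=r$, i.e.\ on $CH^*(X)/2$, this is Totaro's Lemma 7.2, and on $s=r+1$ it is Assumption 7.3. Since $d_2$ is a derivation ([Ya5]) and $Sq^2$ is also a derivation modulo $\tau$ --- the motivic Cartan correction term for $Sq^2$ being a multiple of $\tau$ --- and the two agree on the generators $\rho,t_i,a'$, they agree on all of $E(GP)_2$; here one uses the relations of Theorem 6.7 such as $(a')^2=\rho^2Q_0(a')$ and $b_sa'=0$, which annihilate the few products that would otherwise be problematic. Consequently $E(GP)_3=H(E(GP)_2;Sq^2)$.

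To evaluate this homology I would filter $E(GP)_2=\overline{H}(R_2)\otimes S(t)/(b)$ by the internal degree of the Rost factor (with $1,\rho,\rho^2$ in degree $0$ and $a',Q_0(a'),\rho Q_0(a'),Q_1(a')$ in degree $1$). Because $Sq^2$ raises this filtration by at most one, the associated-graded differential is $Sq^2_{R_2}\otimes 1+1\otimes Sq^2_{S(t)/(b)}$, and the Künneth theorem over $\bZ/2$ gives
\[ E_1\ \cong\ H(\overline{H}(R_2);Sq^2)\otimes H(S(t)/(b);Sq^2)\ \cong\ grW^*(R_2)\otimes H(S(t)/(b);Sq^2), \]
the first factor being Lemma 7.5 and the second, obtained by peeling the non-bounding $Sq^2$-cycle $y$ off Theorem 7.4, being $\Lambda(z_2,\dots,z_m)$. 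Since $grW^*(R_2)\cong\bZ/2\{1,\rho,\rho^2\}$ sits entirely in Rost degree $0$, there is no room for the filtration-raising part of $Sq^2$ to act, this spectral sequence collapses, and $E(GP)_3\cong grW^*(R_2)\otimes H(S(t)/(b);Sq^2)$.

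Finally I would rule out the remaining differentials. The three-row structure leaves only $d_3,d_4$ potentially nonzero in $E(GP)$, and only $d_5,d_9,\dots$ in $E(BW)$; all of them vanish because the algebra generators $\rho$ and $z_2,\dots,z_m$ of $E(GP)_3$ are permanent cycles --- $\rho$ is pulled back from the base $Spec(\bR)$, and the $z_i$ already survive every differential in $W^*(\bar X)$ by Theorem 7.4 --- and the differentials are derivations ([Ya5]). This gives $grW^*(X)\cong E(GP)_\infty=grW^*(R_2)\otimes H(S(t)/(b);Sq^2)\cong\bZ/2\{1,\rho,\rho^2\}\otimes\Lambda(z_2,\dots,z_m)$. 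The hard part is exactly this last step: since the Petrov--Semenov--Zainoulline decomposition is only additive, $d_2$ does not split cleanly along the tensor factors --- which is why one must run the filtration argument above rather than a bare Künneth --- and one has to genuinely verify the vanishing of $d_3,d_4$ and of the $E(BW)$-differentials, which is where the already-known computations for $\bar X$ and $R_2$ do the real work.
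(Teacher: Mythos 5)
Your overall strategy---run the Gille--Pardon and Balmer--Walter spectral sequences with input from Theorem 6.7, compute $E(GP)_3$ as a $Sq^2$-homology, and then kill the remaining differentials---is the same as the paper's (Lemmas 7.7 and 7.8). Two points. First, the filtration you introduce to compute $E(GP)_3$ is an unnecessary detour. Since both $A=H^{*,*'}(R_2;\bZ/2)/(\tau)$ and $S(t)/(b)$ are $Sq^2$-closed subrings of $E(GP)_2$ (by Lemma 7.6 and by classical topology for $G/T$, respectively), $Sq^2$ is a derivation, and the multiplication map $A\otimes S(t)/(b)\to E(GP)_2$ is an additive isomorphism, the K\"unneth theorem applies directly with the tensor differential $Sq^2\otimes 1+1\otimes Sq^2$; no associated-graded step is needed, and this is exactly the argument of Lemma 7.7 in the paper. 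Your worry that ``the PSZ decomposition is only additive'' does not cause a problem here, because the two tensor factors are honest $Sq^2$-subcomplexes, which is all that K\"unneth needs.

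Second, and this is a genuine gap, your argument that $d_r(z_i)=0$ for $r\ge 3$ does not go through as written. Knowing that $z_i$ restricts to a permanent cycle in the spectral sequence for $\bar X$ only tells you that $\mathrm{res}(d_r(z_i))=0$ in $E(GP)_r(\bar X)$; it does not force $d_r(z_i)=0$ in $E(GP)_r(X)$, because the base-change map $E(GP)_r(X)\to E(GP)_r(\bar X)$ kills precisely the $\rho$-divisible classes (since $\rho\mapsto 0$ over $\bar k$). So $d_r(z_i)$ could equal $\rho w$ for some nonzero $w$ and still restrict to zero; this $\rho$-divisible possibility is exactly what must be excluded, and it is invisible from $\bar X$ alone. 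The paper's Lemma 7.8 handles it by a different mechanism: the action $\mu:G_k\times X\to X$ makes the spectral sequence a comodule over the one for $G_k/B_k$ via a K\"unneth isomorphism, the generators $z_i$ can be chosen $\mu^*$-primitive, differentials preserve primitivity, and a class of the form $\rho z$ is visibly non-primitive---a contradiction. You would need to import this coaction/primitivity argument (or supply some other reason why the differentials cannot land in $\rho$-divisible classes) to complete the proof.
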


To prove above lemma and proposition,
we need some lemmas.
First recall that
\[ grH^{*,*'}(X;\bZ/2)\cong  H^{*,*'}(R_2;\bZ/2)\otimes S(t)/(b)\quad (*)\]
where $H^{*,*'}(R_2;\bZ/2)\cong \bZ/2[\tau]\{1,\rho, \rho^2, a',Q_0(a'),\rho Q_0(a'), Q_1(a')\}$.
(Here note $\tau^{-1}\rho^3=a', Q_0(a')=b_1, Q_1(a')=b_2$ in $H^{*,*'}(X;\bZ/2)$.
Hence 
\[E(PG)_2\cong H^{*,*'}(X;H_{\bZ/2}^{*'})\cong grH^{*,*'}(X;\bZ/2)/(\tau).\]
Here in $H^{*}_{Zar}(X;H^{*'}_{\bZ/2})$, degree is given 
 $deg(a')=(1,2), deg(\rho Q_0(a'))=(2,3)$,  and 
degree of $Q_i(a')$ and 
elements in $S(t)/(b)$ are $(*,*)$.
Note that all elements in $E(GP)_2$ have degree
$(*+1,*)$ or $(*,*)$.  Hence if Assumption 7.3  holds
\[ E(GP)_3\cong H(H^{*,*'}(X;\bZ/2)/(\tau),Sq^2).\]
\begin{lemma}
(Lemma 6.12 in [Ya2]) 
In $H^{*,*'}(X;\bZ/2)$, we have
\[ Sq^2(a')=\rho Q_0(a'),\quad Sq^2(Q_0(a'))=Q_1(a').\]
\end{lemma}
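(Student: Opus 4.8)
The plan is to carry out the whole computation inside $H^{*,*'}(\chi_a;\bZ/2)$, where the $\bZ/2[\rho,\tau]$-module structure is completely explicit (the submodule of $\bZ/2[\rho,\tau,\tau^{-1}]$ generated by $1$ and the classes $Q^{\ep}(a')$, with \emph{no} $\tau$-torsion, as recalled above), and then transport the two identities to $H^{*,*'}(R_2;\bZ/2)\subset H^{*,*'}(X;\bZ/2)$ by naturality of the motivic Steenrod operations and the Petrov--Semenov--Zainoulline splitting $H^{*,*'}(X;\bZ/2)\cong H^{*,*'}(R_2;\bZ/2)\otimes S(t)/(b)$. So it suffices to prove the formulas for $a',Q_0(a'),Q_1(a')$ regarded as classes of $H^{*,*'}(\chi_a;\bZ/2)$.

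First I would record the Steenrod action on $H^{*,*}(Spec(\bR);\bZ/2)=\bZ/2[\rho,\tau]$ needed here: $Sq^1\tau=\rho$ (the Bockstein carries $\tau$ to $\{-1\}=\rho$, since $\sqrt{-1}\notin\bR$); $Sq^1\rho=0$ and $Sq^2\rho=0$ for bidegree reasons (the targets $H^{2,1}(\bR;\bZ/2)$ and $H^{3,2}(\bR;\bZ/2)$ vanish); and $Sq^2\tau=0$. Since $\rho,\tau$ are pulled back along $\chi_a\to Spec(\bR)$, these formulas hold in $H^{*,*'}(\chi_a;\bZ/2)$ as well. The computation then rests on the motivic Cartan formulas $Sq^1(xy)=Sq^1(x)y+xSq^1(y)$ and, crucially with its $\tau$-correction term, $Sq^2(xy)=Sq^2(x)y+xSq^2(y)+\tau\,Sq^1(x)Sq^1(y)$.

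For the first identity: by construction $a'\in H^{3,2}(\chi_a;\bZ/2)$ satisfies $\tau a'=a=\rho^3$. Apply $Sq^2$ to $\tau a'=\rho^3$. On the right $Sq^2(\rho^3)=0$, since $\rho^3$ is pulled back from $Spec(\bR)$ and $H^{5,4}(\bR;\bZ/2)=0$. On the left the Cartan formula gives $Sq^2(\tau)a'+\tau\,Sq^1(\tau)Sq^1(a')+\tau\,Sq^2(a')$, which equals $0\cdot a'+\tau\rho\,Q_0(a')+\tau\,Sq^2(a')$ because $Sq^2\tau=0$, $Sq^1\tau=\rho$ and $Sq^1(a')=Q_0(a')$. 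Hence $\tau\big(Sq^2(a')+\rho\,Q_0(a')\big)=0$, and $\tau$-torsion-freeness of $H^{*,*'}(\chi_a;\bZ/2)$ forces $Sq^2(a')=\rho\,Q_0(a')$. For the second identity I would use the Milnor relation $Q_1=Sq^1Sq^2+Sq^2Sq^1$: applied to $a'$ and combined with $Sq^1=Q_0$, $Q_0^2=0$, $Sq^1\rho=0$ and the first identity, $Q_1(a')=Sq^1(\rho\,Q_0(a'))+Sq^2(Q_0(a'))=Sq^2(Q_0(a'))$. (Alternatively one may just compute $Sq^2$ of the explicit representative $\tau^{-2}\rho^4$ of $Q_0(a')$, obtaining $\rho^2\tau^{-3}\cdot\rho^4=\rho^6\tau^{-3}=Q_1(a')$.)

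Finally I would note that the classes $a',Q_0(a'),Q_1(a')$ appearing in $H^{*,*'}(R_2;\bZ/2)$ are the images of the ones just treated under the map induced by $R_2\subset M(V_a)$, with which $Sq^2$ commutes, so both identities descend to $H^{*,*'}(R_2;\bZ/2)$ and hence, as the $R_2$-summand, to $H^{*,*'}(X;\bZ/2)$. \textbf{The one genuinely non-formal input is the value $Sq^2\tau=0$}: were it $\rho^2$, the same computation would instead force $\tau\,Sq^2(a')=0$ and hence $Sq^2(a')=0$; so having this fact about $H^{*,*}(\bR;\bZ/2)$ correctly, together with the $\tau$-twisted Cartan formula, is the heart of the matter, and the remaining work is only bookkeeping of bidegrees and the $\tau$-torsion-freeness of $H^{*,*'}(\chi_a;\bZ/2)$.
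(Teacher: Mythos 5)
Your proposal is correct and takes essentially the same approach as the paper: both rely on applying the $\tau$-twisted Cartan formula to $Sq^2(\tau a')=Sq^2(\rho^3)=0$, then cancel $\tau$ using $\tau$-torsion-freeness, and then derive the second identity from the Milnor relation $Q_1=Q_0Sq^2+Sq^2Q_0$. The only cosmetic differences are that you carry out the computation in $H^{*,*'}(\chi_a;\bZ/2)$ before transporting to $X$ (the paper works directly in $H^{*,*'}(X;\bZ/2)$, which is also $\tau$-torsion free for the same structural reasons), and that you make explicit the input $Sq^2\tau=0$, which the paper uses silently by omitting the term $Sq^2(\tau)a'$ from the final line.
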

\begin{proof}
By the modified Cartan formula
([Vo])
\[ 0=Sq^2(\rho^3)=Sq^2(\tau a')=\tau Sq^2(a')+Sq^2(\tau)a'+
 \tau Sq^1(\tau)Sq^1(a').\]
Hence $\tau Sq^2(a')=\tau Sq^1(\tau)Sq^1(a')=\tau \rho Q_0(a')$.  Since
$H^{*,*'}(X;\bZ/2)$ is $\tau$-torsion free, we have the first equation.  The second equation follows from
\[ Q_1(a')=Q_0Sq^2(a')+Sq^2Q_0(a')\]
and $Q_0Sq^2(a')=
\rho Q_0Q_0(a')=0$.
\end{proof}
Thus we have Lemma 7.5.
\begin{lemma} If Assumption 7.3 is satisfied, then we have the isomorphism  
\[E(GP)_3\cong \bZ/2[\rho]/(\rho^3)\otimes H(S(t)/(b);Sq^2).\]
\end{lemma}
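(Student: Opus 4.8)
The plan is to identify $E(GP)_3$ with the $Sq^2$-cohomology of $E(GP)_2=H^{*,*'}(X;\bZ/2)/(\tau)$ and compute it; that identification $E(GP)_3\cong H\bigl(H^{*,*'}(X;\bZ/2)/(\tau),\,Sq^2\bigr)$ is already in hand from the discussion preceding Lemma 7.7. The crucial preliminary point is that modulo $\tau$ the operation $Sq^2$ is a \emph{derivation}: in the motivic Cartan formula $Sq^2(xy)=Sq^2(x)y+xSq^2(y)+\tau Sq^1(x)Sq^1(y)$ the correction term vanishes in $H^{*,*'}(X;\bZ/2)/(\tau)$ (equivalently, $d_2$ is a derivation by the multiplicative structure of $W^*$, [Sc-Tr]). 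Using the isomorphism $(*)$, $grH^{*,*'}(X;\bZ/2)\cong H^{*,*'}(R_2;\bZ/2)\otimes S(t)/(b)$, together with the module structure of $H^{*,*'}(R_2;\bZ/2)$ from Corollary 6.6, I would write $E(GP)_2\cong V\otimes S(t)/(b)$ with
\[V:=H^{*,*'}(R_2;\bZ/2)/(\tau)=\bZ/2\{1,\rho,\rho^2,a',b_1,\rho b_1,b_2\},\qquad b_1=Q_0(a'),\ b_2=Q_1(a'),\]
and note that $V\hookrightarrow E(GP)_2$, being the image of the split monomorphism induced (and reduced mod $\tau$) by the motive summand $R_2\subset M(X)$, is $Sq^2$-stable by naturality.

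Next I would compute $Sq^2$ on the two factors. On $V$: Lemma 7.7 gives $Sq^2(a')=\rho b_1$ and $Sq^2(b_1)=b_2$, while $1,\rho,\rho^2$ are cycles (from $Sq^2(\rho)=0$ and the derivation property), $b_2$ is a cycle because $H^{8,4}(R_2;\bZ/2)=0$, and $\rho b_1$ is a cycle because $Sq^2(\rho b_1)=\rho\,Sq^2(b_1)=\rho b_2=0$ (using $\rho^7=0$ in $H^{*,*'}(R_2;\bZ/2)$). Hence $\Ker(Sq^2|_V)=\bZ/2\{1,\rho,\rho^2,\rho b_1,b_2\}$ and $\Img(Sq^2|_V)=\bZ/2\{\rho b_1,b_2\}$, so $H(V;Sq^2)=\bZ/2\{1,\rho,\rho^2\}=\bZ/2[\rho]/(\rho^3)$; this reproves Lemma 7.5. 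On $S(t)/(b)$: naturality of $Sq^2$ along $S(t)=CH^*(BT)\to CH^*(X)$ with $Sq^2(t_i)=t_i^2$ shows the induced operation is the derivation $t_i\mapsto t_i^2$, which descends to $S(t)/(b)=(CH^*(X)/2)/(b_1,b_2)$ since the ideal $(b_1,\dots,b_\ell)=(b_1,b_2)$ of $CH^*(X)/2$ is $Sq^2$-stable ($Sq^2 b_1=b_2$, $Sq^2 b_2=0$); its cohomology is $H(S(t)/(b);Sq^2)$.

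Finally I would assemble the factors. Reading the relations $R_1,R_2,R_3$ of Theorem 6.7 modulo $\tau$ — in particular $\rho a'=0$, $a'b_i=0$, $b_ib_j=0$, $\rho^2b_1=0$, $\rho b_2=0$ — one verifies that $Sq^2$ has no cross terms between $V$ and the Tate factor, so $(E(GP)_2,Sq^2)$ is the tensor product of complexes $(V,Sq^2)\otimes(S(t)/(b),Sq^2)$. The Künneth theorem over the field $\bZ/2$ then gives
\[E(GP)_3\cong H(V;Sq^2)\otimes H(S(t)/(b);Sq^2)\cong \bZ/2[\rho]/(\rho^3)\otimes H(S(t)/(b);Sq^2).\]
I expect this last step to be the main obstacle: the decomposition $(*)$ is a priori only one of $S(t)/(b)$-modules, so one really needs the full ring presentation of Theorem 6.7 (not merely the additive structure) to certify that $Sq^2$ respects the tensor splitting — that is, that the Rost factor and the Tate factor do not interact under the differential.
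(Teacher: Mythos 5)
Your proposal follows essentially the same route as the paper: identify $E(GP)_3$ with the $Sq^2$-homology of $E(GP)_2$, note that $V=H^{*,*'}(R_2;\bZ/2)/(\tau)$ and $S(t)/(b)$ are each $Sq^2$-stable, compute $H(V;Sq^2)\cong\bZ/2\{1,\rho,\rho^2\}$ from $Sq^2(a')=\rho Q_0(a')$, $Sq^2(Q_0(a'))=Q_1(a')$, and conclude by K\"unneth. The paper's proof is exactly this, stated tersely (it asserts the closure of both factors under $Sq^2$, the value of $H(A;Sq^2)$ — with a typo $\rho^3$ for $\rho^2$ — and then invokes K\"unneth). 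Your version fills in the degree‑counting that kills the remaining basis elements, and your relations ``$\rho a'=0$'', ``$\rho^2 b_1=0$'' are correct once read modulo $\tau$ (indeed $\rho a'=\tau b_1$ and $\rho^2 b_1=\tau b_2$ in $H^{*,*'}(R_2;\bZ/2)$, hence both vanish in $E(GP)_2$), so those are not errors even though they differ from the literal shape of $R_2,R_3$ in Theorem~6.7.

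The one place you go genuinely beyond the paper is in flagging the K\"unneth step: the decomposition $(*)$ is a priori only additive, and one must justify that $(E(GP)_2,Sq^2)$ really is the tensor‑product complex $(V,Sq^2)\otimes(S(t)/(b),Sq^2)$ rather than merely containing two $Sq^2$-subcomplexes. Your resolution — that $Sq^2$ is a derivation after reduction mod $\tau$, $V$ is a summand by naturality of the motive decomposition, and the products $V^+\cdot S(t)/(b)^+$ lie in degrees already accounted for by the tensor basis — is the right way to close the gap. The paper's proof silently assumes this compatibility; your caveat at the end is a legitimate observation about what the paper leaves implicit, and your argument does in fact supply the missing verification.
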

\begin{proof}
The following submodule in $H^*(X;H_{\bZ/2}^{*'})$
\[A=H^{*,*'}(R_2;\bZ/2)/(\tau)
\cong \bZ/2\{1, \rho,\rho^2,a',Q_0(a'),\rho Q_0(a'),Q_1(a')\}.\]
is closed under $Sq^2$. Its $Sq^2$-homology is 
$H(A;Sq^2)\cong \bZ/2\{1,\rho,\rho^3\}$.  Note 
$S(t)/(b)$ is also closed under $Sq^2$.since $H^*(G/T;\bZ/2)$ is closed under $Sq^2$.  By the Kunneth formula, we have the lemma.
\end{proof}
\begin{lemma}  
If Assumption 7.3 is satisfied, then we have
\[ E(GP)_3\cong E(GP)_{\infty}\cong E(BW)_{\infty}.\]
\end{lemma}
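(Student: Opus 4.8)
The plan is to show that both spectral sequences collapse, so that all three objects coincide with the page computed in the preceding lemma, $E(GP)_3\cong \bZ/2[\rho]/(\rho^3)\otimes H(S(t)/(b);Sq^2)$. First record, from the theorem of [Ya5], [Zi] on $W^*(\bar X)$ applied to the split form together with the K\"unneth formula for $Sq^2$-homology (which is a derivation on tensor products), that $H(S(t)/(b);Sq^2)\cong\Lambda(z_2,\dots,z_m)$ is an exterior algebra on classes of odd degree, all lying on the diagonal $r=s$ of the $E(GP)$-page since they are represented in $CH^*(X)/2$. Because both $E(GP)_r$ and $E(BW)_r$ are multiplicative with differentials that are derivations (recalled before the theorem on $W^*(\bar X)$, using [Sc-Tr], [Ya5]), it is enough to prove that the $\bZ/2$-algebra generators $\rho,z_2,\dots,z_m$ of $E(GP)_3$ — and, for $E(BW)$, also the $4$-periodicity class — are permanent cycles.

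The class $\rho$ is a permanent cycle by naturality: it is the pull-back along $X\to Spec(\bR)$ of $\rho\in E(GP)^{0,1}(Spec(\bR))$, and the Gersten--Pardon spectral sequence of $Spec(\bR)$ is concentrated in the column $r=0$, hence degenerates. For the $z_i$ I would begin with a count of bidegrees. A differential $d_r$ has bidegree $(1,r-1)$, so it raises $s-r$ by $r-2$; since $E(GP)_3$ is supported in $0\le s-r\le 2$ — with $s-r=1$ exactly on $\rho\cdot\Lambda(z_2,\dots,z_m)$ and $s-r=2$ exactly on $\rho^2\cdot\Lambda(z_2,\dots,z_m)$ — only $d_3$ and $d_4$ can be non-trivial on $E(GP)_3$, and $d_4$ only on diagonal classes; moreover $d_3(z_i)\in\rho\cdot\Lambda(z_2,\dots,z_m)$ and $d_4(z_i)\in\rho^2\cdot\Lambda(z_2,\dots,z_m)$, in $CH$-degree $\deg z_i+1$ (even). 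Since every $z_j$ has odd degree $\ge 3$ (a check for the groups of type $(I)$: there is no $Sq^2$-cycle in $CH^1(X)/2$), the degree-$(\deg z_i+1)$ part of $\Lambda(z_2,\dots,z_m)$ vanishes whenever $\deg z_i=3$, so those $z_i$ are permanent cycles.

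The remaining case — $z_i$ of degree $\ge 5$, where $d_3(z_i)$ could a priori be a non-zero multiple $\rho\cdot z_az_b$ of a product of lower generators — is the main obstacle. Restriction to the split form $\bar X$ identifies $z_i$ with a permanent cycle there (the Pardon spectral sequence of $\bar X$ is supported on the diagonal and collapses at $E_3$ onto $W^*(\bar X)\cong\Lambda(y,z_2,\dots,z_m)$), but since any such $d_3(z_i)$ is divisible by $\rho$ it restricts to $0$, and the comparison by itself is inconclusive; one must compute the secondary operation $d_3$ on these explicit classes directly. This operation is governed by the Milnor operations and by $Sq^2$ acting on representatives, so Assumption 7.3 and the relations in $H^{*,*'}(X;\bZ/2)$ recorded for type $(I)$ groups, checked against the short explicit list of such groups, should force $d_3(z_i)=0$. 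Granting this, all of $E(GP)_3$ consists of permanent cycles, so $E(GP)_3=E(GP)_\infty$ and $grH^*(W(X))\cong\bZ/2[\rho]/(\rho^3)\otimes H(S(t)/(b);Sq^2)$. Finally $E(BW)_2^{r,4s'}\cong H^r(W(X))$ is concentrated in rows divisible by $4$, so its differentials occur only in one residue class modulo $4$; multiplicativity reduces their vanishing to the generators $\rho,z_i$ and the periodicity class, all permanent cycles, whence $E(BW)$ collapses at $E_2$ and $E(BW)_\infty\cong E(GP)_\infty\cong E(GP)_3$, as claimed.
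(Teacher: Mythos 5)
Your proposal correctly isolates the task — show the algebra generators of $E(GP)_3$ are permanent cycles — and correctly handles $\rho$ by naturality, and your bidegree count narrowing things to $d_3,d_4$ is a reasonable observation. But it then stops precisely at the hard point: for a generator $z_i$ of degree at least $5$, you write that a direct computation via Assumption 7.3 and the $Sq^2$-relations ``should force $d_3(z_i)=0$'' and then proceed ``granting this.'' That step is the content of the lemma, so deferring it means the argument is not a proof. You even correctly observe that comparison with $\bar X$ is inconclusive (since $\rho z$ restricts to zero), which leaves your route reduced to an unperformed case-by-case verification over the list of type $(I)$ groups.

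The paper closes this gap by a different, structural idea: primitivity under the multiplication $\mu\colon G_k\times X\to X$. Using the K\"unneth isomorphism
\[
E(GP)_r(G_k/B_k\times X)\cong E(GP)_r(G_k/B_k)\otimes E(GP)_r(X)
\]
(and the analogous statement for $E(BW)_r$), one calls $x$ primitive if $\mu^*(x)=res^*(x)\otimes 1+1\otimes x$, chooses the generators $z_i$ to be primitive, and notes that $d_r$ preserves primitivity. The only possible target of $d_3(z_i)$ is of the form $\rho z$, which is \emph{not} primitive because $\mu^*(\rho z)$ carries a cross term $\rho\otimes z+\cdots$; hence $d_3(z_i)=0$. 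The same argument disposes of the higher differentials and of $E(BW)_r$ at once. This avoids any explicit $Sq^2$-computation and works uniformly across all groups of type $(I)$. To salvage your route you would have to actually carry out the group-by-group check; otherwise the primitivity argument is the missing ingredient.
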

\begin{proof}
We consider the map which induced from the product
\[ \mu: G_k\times \bG_k/B_k = G_k\times X\to X.\]
 It is known [Ya5] that  $W^*(G_k)\cong W^*(G_k/T_k)$.
At first we recall 
\[ H^{*,*'}(G_k/B_k\times X;\bZ/2)\cong H^{*,*'}(G_k/B_k;\bZ/2)
\otimes H^{*,*'}(X;\bZ/2) \]
since $H^{*,*'}(G_k/B_k;\bZ/2)\cong H^{*,*'}(\bar X;\bZ/2)$
and $\bar X$ is cellular.
By using Kunneth formula, inductively we get
\[E(GP)_r(G_k/B_k\times X)\cong E(GP)_r(G_k/B_k)\otimes
E(GP)_r(X)\]
where $E(GP)_r(Y)$ is the spectral sequence for a space $Y$.
Similar fact  holds for $E(BW)_r$.  
Thus we get
 $ W^*(G_k\times X)\cong W^*(G_k)\otimes W^*(X).$

Define  that $x\in W^*(X)$ is primitive if
\[ \mu^*(x)=res^*(x)\otimes 1+1\otimes x \quad res : W^*(X)\to W^*(\bar X)\]
Of course, if $x$ is primitive, then $d_r(x)$ is primitive.
We can take generators $z_i$ primitive (otherwise, add some
non-primitive elements).  If $d_3(z_i)\not =0$, then 
it is $\rho z$ for $z\in H^*(\bar X;H_{\bZ/2}^*)$.  But
\[\mu^*(\rho z)=1\otimes \rho z+\rho\otimes z+... \]
is not primitive.  Similarly, we can prove the lemma.
\end{proof}

\end{document}